\documentclass[12pt,reqno]{amsart}
\usepackage[cp1251]{inputenc}
\usepackage[T2A]{fontenc}
\usepackage[english]{babel}
\usepackage{amsmath,amsfonts,amssymb}
\usepackage{geometry}
\usepackage{amsmath, amsthm, amscd, amsfonts, amssymb, graphicx, color}
\usepackage[bookmarksnumbered, colorlinks, plainpages]{hyperref}

\textheight630pt \textwidth475pt \oddsidemargin0pt \evensidemargin0pt
\topmargin5pt \headheight15pt \headsep15pt \tolerance=4000

\numberwithin{equation}{section}

\newtheorem{theorem}{Theorem}[section]
\newtheorem{corollary}[theorem]{Corollary}
\newtheorem{lemma}[theorem]{Lemma}
\newtheorem{proposition}[theorem]{Proposition}

\theoremstyle{remark}
\newtheorem{remark}[theorem]{Remark}
\newtheorem{example}{Example}[subsection]

\theoremstyle{definition}
\newtheorem{definition}[theorem]{Definition}
\newtheorem*{main-definition}{Main Definition}

\begin{document}

\title{An extended Hilbert scale and its applications}

%    Information for first author

\author[V. Mikhailets]{Vladimir Mikhailets}

\address{Institute of Mathematics of the National Academy of Sciences of Ukraine, 3 Tereshchen\-kivs'ka, Kyiv, 01024, Ukraine}

\email{mikhailets@imath.kiev.ua}

%    Information for second author

\author[A. Murach]{Aleksandr Murach}

\address{Institute of Mathematics of the National Academy of Sciences of Ukraine, 3 Tereshchen\-kivs'ka, Kyiv, 01024, Ukraine}

\email{murach@imath.kiev.ua}

%    Information for third author

\author[T. Zinchenko]{Tetiana Zinchenko}

\address{5d Mittelstr., Oranienburg, 16515, Germany}

\email{zinchenkotat@ukr.net}

%    General info
\subjclass[2010]{46B70, 46E35, 47A40}

%%46B70 Interpolation between normed linear spaces
%%46M35 Abstract interpolation of topological vector spaces
%%46E35 Sobolev spaces and other spaces of smooth functions
%%46Cxx Inner product spaces and their generalizations, Hilbert spaces
%%46C05 Hilbert and pre-Hilbert spaces: geometry and topology
%%46C99 None of the above, but in this section
%%47A40 (Generalized) eigenfunction expansions, rigged Hilbert spaces
%%47A60 Functional calculus

\keywords{Hilbert scale, interpolation space, interpolation with function parameter, interpolational inequality, generalized Sobolev space, spectral expansion}

\thanks{This work is supported by the European Union’s Horizon 2020 research and innovation programme under the Marie Sk{\l}odowska-Curie grant agreement No 873071 (SOMPATY: Spectral Optimization: From Mathematics to Physics and Advanced Technology).}

\begin{abstract}
We propose a new viewpoint on Hilbert scales extending them by means of all Hilbert spaces that are interpolation ones between spaces on the scale. We prove that this extension admits an explicit description with the help of $\mathrm{OR}$-varying functions of the operator generating the scale. We also show that this extended Hilbert scale is obtained by the quadratic interpolation (with function parameter) between the above spaces and is closed with respect to the quadratic interpolation between Hilbert spaces. We give applications of the extended Hilbert scale to interpolational inequalities, generalized Sobolev spaces, and spectral expansions induced by abstract and elliptic operators.
\end{abstract}

\maketitle

\section{Introduction}\label{sec1}

Hilbert scales (above all, the Sobolev scale) play an important role in mathematical analysis and the theory of differential equations; see, e.g., the classical monographs \cite{Berezansky68, Hermander85iii, Lax06, LionsMagenes72}, surveys \cite{Agranovich94, Agranovich97, Eidelman94}, and recent book \cite{KoshmanenkoDudkin16}. Such scales are built with respect to an arbitrarily chosen Hilbert space $H$ and a positive definite self-adjoint unbounded operator $A$ acting in this space. As a result, we obtain the Hilbert scale $\{H^{s}_{A}:s\in\mathbb{R}\}$, where $H^{s}_{A}$ is the completion of the domain of $A^{s}$ in the norm $\|A^{s}u\|_{H}$ of a vector~$u$. This scale has the following fundamental property: if $0<\theta<1$, then the mapping  $\{H^{s_0}_{A},H^{s_1}_{A}\}\mapsto H^{s}_{A}$, with $s_0<s_1$ and $s:=(1-\theta)s_0+\theta s_1$, is an exact interpolation functor of type $\theta$ \cite[Theorem~9.1]{KreinPetunin66}. Concerning a linear operator $T$ bounded on both spaces $H^{s_0}_{A}$ and $H^{s_1}_{A}$, this means that $T$ is also bounded on $H^{s}_{A}$ and that the norms of $T$ on these spaces satisfy the inequality
\begin{equation*}
\|T:H^{s}_{A}\to H^{s}_{A}\|\leq
\|T:H^{s_0}_{A}\to H^{s_0}_{A}\|^{1-\theta}\,
\|T:H^{s_1}_{A}\to H^{s_1}_{A}\|^{\theta}.
\end{equation*}
(An analogous property is fulfilled for bounded linear operators that act on pairs of different spaces belonging to two Hilbert scales.) Hence, every space $H^{s}_{A}$ subject to $s_0<s<s_1$ is an interpolation space between $H^{s_0}_{A}$ and $H^{s_1}_{A}$. However, the class of such interpolation Hilbert spaces is far broader than the section $\{H^{s}_{A}:s_0\leq s\leq s_1\}$ of the Hilbert scale.

It is therefore natural to consider the extension of this scale by means of all Hilbert spaces that are interpolation ones between some spaces $H^{s_0}_{A}$ and $H^{s_1}_{A}$, where the numbers $s_0<s_1$ range over $\mathbb{R}$.  Such an extended Hilbert scale is an object of our investigation. We will show that this scale admits a simple explicit description with the help of $\mathrm{OR}$-varying functions of $A$, is obtained by the quadratic interpolation (with function parameter) between the spaces $H^{s_0}_{A}$ and $H^{s_1}_{A}$, and is closed with respect to the quadratic interpolation between Hilbert spaces. These and some other properties of the extended Hilbert scale are considered in Section~\ref{sec2} of this paper; they are proved in Section~\ref{sec3}. Note that the above interpolation and interpolational properties of Hilbert scales are studied in articles \cite{Ameur04, Ameur19, Donoghue67, Fan11, FoiasLions61, Krein60a, KreinPetunin66, Lions58, MikhailetsMurach08MFAT1, Ovchinnikov84, Pustylnik82} (see also monographs \cite[Chapter~1]{LionsMagenes72},
\cite[Section 1.1]{MikhailetsMurach14}, and \cite[Chapters 15 and 30]{Simon19}). Among them, of fundamental importance for our investigation is Ovchinnikov's result \cite[Theorem 11.4.1]{Ovchinnikov84} on an explicit description (with respect to equivalence of norms) of all Hilbert spaces that are interpolation ones between arbitrarily chosen compatible  Hilbert spaces.

The next sections are devoted to various applications of the extended Hilbert scale. Section~\ref{sec3b} considers interpolational inequalities that connect the norms in spaces on the scale to each other, as well as the norms of linear operators acting between extended Hilbert scales. From the viewpoint of inequalities for norms of vectors, this scale can be interpreted as a variable Hilbert scale investigated in \cite{Hegland95, Hegland10, HeglandAnderssen11, MatheTautenhahn06}; the latter appears naturally in the theory of ill-posed problems (see, e.g., \cite{HeglandHofmann11, JinTautenhahn11, MathePereverzev03, TautenhahnHamarikHofmannShao13}). Section~\ref{sec4} gives applications of the extended Hilbert scale to function or distribution spaces, which are used specifically in the theory of pseudodifferential operators. We show that  the extended Hilbert scale generated by some elliptic operators consists of generalized Sobolev spaces whose regularity order is a function $\mathrm{OR}$-varying at infinity. These spaces form the extended Sobolev scale considered in \cite{MikhailetsMurach13UMJ3, MikhailetsMurach15ResMath1} and \cite[Section~2.4.2]{MikhailetsMurach14}. It has important applications to elliptic operators \cite{Murach09UMJ3, MurachZinchenko13MFAT1, ZinchenkoMurach12UMJ11, ZinchenkoMurach14JMathSci} and elliptic boundary-value problems \cite{AnopDenkMurach20arxiv, AnopKasirenko16MFAT, AnopMurach14MFAT, AnopMurach14UMJ, KasirenkoMurach18UMJ11}. Among them are applications to the investigation of various types of convergence of spectral expansions induced by elliptic operators. This topic is examined in the last Section~\ref{sec6}. Its results are based on theorems on the convergence---in a space with two norms---of the spectral expansion induced by an abstract normal operator and on the degree of this convergence. These theorems are proved in Section~\ref{sec6a}.

\section{Basic results}\label{sec2}

Let $H$ be a separable infinite-dimensional complex Hilbert space, with  $(\cdot,\cdot)$ and $\|\cdot\|$ respectively denoting the inner product and the corresponding norm in~$H$. Let $A$ be a positive definite self-adjoint unbounded linear operator in~$H$. The positive definiteness of $A$ means that there exists a number $r>0$ such that $(Au,u)\geq r(u,u)$ for every $u\in\mathrm{Dom}\,A$. As usual, $\mathrm{Dom}\,A$ denotes the domain of $A$. Without loss of generality we suppose that the lower bound $r=1$.

For every $s\in\mathbb{R}$, the self-adjoint operator $A^{s}$ in $H$ is well defined with the help of the spectral decomposition of~$A$. The domain $\mathrm{Dom}\,A^{s}$ of $A^{s}$ is dense in $H$; moreover, $\mathrm{Dom}\,A^{s}=H$ whenever $s\leq0$. Let $H^{s}_{A}$ denote the completion of $\mathrm{Dom}\,A^{s}$ with respect to the norm $\|u\|_{s}:=\|A^{s}u\|$ and the corresponding inner product $(u_{1},u_{2})_{s}:=(A^{s}u_{1},A^{s}u_{2})$, with $u,u_{1},u_{2}\in\mathrm{Dom}\,A^{s}$. The Hilbert space $H^{s}_{A}$ is separable. As usual, we retain designations  $(\cdot,\cdot)_{s}$ and $\|\cdot\|_{s}$ for the inner product and the corresponding norm in this space. Note that the linear manifold $H^{s}_{A}$ coincides with $\mathrm{Dom}\,A^{s}$ whenever $s\geq0$ and that $H^{s}_{A}\supset H$ whenever $s<0$. The set $H^{\infty}_{A}:=\bigcap_{\lambda>0}H^{\lambda}_{A}$
is dense in every space $H^{s}_{A}$, with $s\in\mathbb{R}$.

The class $\{H^{s}_{A}:s\in\mathbb{R}\}$ is called the Hilbert scale generated by $A$ or, simply, $A$-scale (see., e.g., \cite[Section~9, Subsection~1]{KreinPetunin66}). If $s_{0},s_{1}\in\mathbb{R}$ and $s_{0}<s_{1}$, then the identity mapping on $\mathrm{Dom}\,A^{s_{1}}$ extends uniquely to a continuous embedding operator $H^{s_{1}}_{A}\hookrightarrow H^{s_{0}}_{A}$, the embedding being normal. Therefore, interpreting $H^{s_{1}}_{A}$ as a linear manifold in $H^{s_{0}}_{A}$, we obtain the normal pair $[H^{s_{0}}_{A},H^{s_{1}}_{A}]$ of Hilbert spaces. This means that $H^{s_{1}}_{A}$ is dense in $H^{s_{0}}_{A}$ and that $\|u\|_{s_{0}}\leq\|u\|_{s_{1}}$ for every $u\in H^{s_{1}}_{A}$.

\begin{main-definition}
\emph{The extended Hilbert scale generated by $A$} or, simply, \emph{the extended $A$-scale} consists of all Hilbert spaces each of which is an interpolation space for a certain pair $[H^{s_{0}}_{A},H^{s_{1}}_{A}]$ where $s_{0}<s_{1}$ (the real numbers $s_{0}$ and $s_{1}$ may depend on the interpolation Hilbert space).
\end{main-definition}

We will give an explicit description of this scale and prove its important interpolation properties.

Beforehand, let us recall the definition of an interpolation space in the case considered. Suppose $H_{0}$ and $H_{1}$ are Hilbert spaces such that $H_{1}$ is a linear manifold in $H_{0}$ and that the embedding $H_{1}\hookrightarrow H_{0}$ is continuous. A~Hilbert space $X$ is called an interpolation space for the pair $[H_{0},H_{1}]$ (or, in other words, an interpolation space between $H_{0}$ and $H_{1}$) if $X$ satisfies the following two conditions:
\begin{enumerate}
\item [(i)] $X$ is an intermediate space for this pair, i.e. $X$ is a linear manifold in $H_{0}$ and the continuous embeddings $H_{1}\hookrightarrow X\hookrightarrow H_{0}$ hold;
\item [(ii)] for every linear operator $T$ given on $H_{0}$, the following implication is true: if the restriction of $T$ to $H_{j}$ is a bounded operator on $H_{j}$ for each $j\in\{0,1\}$, then the restriction of $T$ to $X$ is a bounded operator on $X$.
\end{enumerate}

Property (ii) implies the following inequality for norms of operators:
\begin{equation*}
\|T:X\to X\|\leq c\,\max\bigl\{\,\|T:H_{0}\to H_{0}\|,\,
\|T:H_{1}\to H_{1} \|\,\bigr\},
\end{equation*}
where $c$ is a certain positive number which does not depend on $T$ (see, e.g., \cite[Theorem 2.4.2]{BerghLefstrem76}). If $c=1$, the interpolation space $X$ is called exact.

Both properties (i) and (ii) are invariant with respect to the change of the norm in $X$ for an equivalent norm. Therefore, it makes sense to describe the interpolation spaces for the pair $[H_{0},H_{1}]$ up to equivalence of norms.

As is known \cite[Theorem 9.1]{KreinPetunin66}, every space $H^{s}_{A}$ is an interpolation one for the pair $[H^{s_{0}}_{A},H^{s_{1}}_{A}]$ whenever $s_{0}\leq s\leq s_{1}$. To give a description of all interpolation Hilbert spaces for this pair, we need more general functions of $A$ than power functions used in the definition of $H^{s}_{A}$.

Choosing a Borel measurable function $\varphi:[1,\infty)\to(0,\infty)$ arbitrarily and using the spectral decomposition of~$A$, we define the self-adjoint operator $\varphi(A)>0$ which acts in~$H$. Recall that $\mathrm{Spec}\,A\subseteq[1,\infty)$ according to our assumption. Here and below, $\mathrm{Spec}\,A$ denotes the spectrum of $A$, and $\varphi(A)>0$ means that $(\varphi(A)u,u)>\nobreak0$ for every $u\in\mathrm{Dom}\,\varphi(A)\setminus\{0\}$. Let $H^{\varphi}_{A}$ denote the completion of the domain $\mathrm{Dom}\,\varphi(A)$ of $\varphi(A)$ with respect to the norm $\|u\|_{\varphi}:=\|\varphi(A)u\|$ of $u\in\mathrm{Dom}\,\varphi(A)$.

The space $H^{\varphi}_{A}$ is Hilbert and separable. Indeed, this norm is induced by the inner product $(u_{1},u_{2})_{\varphi}:=(\varphi(A)u_{1},\varphi(A)u_{2})$ of $u_{1},u_{2}\in\mathrm{Dom}\,\varphi(A)$. Besides, endowing the linear space $\mathrm{Dom}\,\varphi(A)$ with the norm $\|\cdot\|_{\varphi}$ and considering the isometric operator
\begin{equation}\label{f2.1}
\varphi(A):\mathrm{Dom}\,\varphi(A)\to H,
\end{equation}
we infer the separability of $\mathrm{Dom}\,\varphi(A)$ (in the norm $\|\cdot\|_{\varphi}$) from the separability of~$H$. Therefore, the space $H_{A}^{\varphi}$ is separable as well. In the sequel we use the same designations  $(\cdot,\cdot)_{\varphi}$ and $\|\cdot\|_{\varphi}$ for the inner product and the corresponding norm in the whole Hilbert space~$H^{\varphi}_{A}$.

Operator \eqref{f2.1}  extends uniquely (by continuity) to an isometric isomorphism
\begin{equation}\label{f2.2}
B:H_{A}^{\varphi}\leftrightarrow H.
\end{equation}
The equality $B(H_{A}^{\varphi})=H$ follows from the fact that the range of $\varphi(A)$ coincides with $H$ whenever $0\not\in\mathrm{Spec}\,\varphi(A)$ and that the range is narrower than $H$ but is dense in~$H$ whenever $0\in\mathrm{Spec}\,\varphi(A)$. Hence, $(u_{1},u_{2})_{\varphi}=(Bu_{1},Bu_{2})$ for every $u_{1},u_{2}\in H_{A}^{\varphi}$. Besides, $H_{A}^{\varphi}=\mathrm{Dom}\,\varphi(A)$ if and only if $0\not\in\mathrm{Spec}\,\varphi(A)$.

Remark that we use the same designation $H^{\varphi}_{A}$ both in the case where $\varphi$ is a function and in the case where $\varphi$ is a number. This will not lead to ambiguity because we will always specify what $\varphi$ means, a function or number. Of course, this remark also concerns the designations of the norm and inner product in $H^{\varphi}_{A}$.

We need the Hilbert spaces $H^{\varphi}_{A}$ such that $\varphi$ ranges over a certain function class $\mathrm{OR}$. By definition, this class consists of all Borel measurable functions
$\varphi:\nobreak[1,\infty)\rightarrow(0,\infty)$ for which there exist numbers $a>1$ and $c\geq1$ such that $c^{-1}\leq\varphi(\lambda t)/\varphi(t)\leq c$ for all $t\geq1$ and $\lambda\in[1,a]$ (the numbers $a$ and $c$ may depend on $\varphi$). Such functions were introduced by V.~G.~Avakumovi\'c \cite{Avakumovic36} in 1936, are called OR-varying (or O-regularly varying) at infinity and have been well investigated \cite{BinghamGoldieTeugels89, BuldyginIndlekoferKlesovSteinebach18, Seneta76}.

The class $\mathrm{OR}$ admits the following simple description \cite[Theorem~A.1]{Seneta76}: $\varphi\in\mathrm{OR}$ if and only if
\begin{equation*}
\varphi(t)=
\exp\Biggl(\beta(t)+
\int\limits_{1}^{t}\frac{\gamma(\tau)}{\tau}\;d\tau\Biggr),
\quad t\geq1,
\end{equation*}
for some bounded Borel measurable functions $\beta,\gamma:[1,\infty)\to\mathbb{R}$.

This class has the following important property \cite[Theorem~A.2(a)]{Seneta76}: for every $\varphi\in\mathrm{OR}$ there exist real numbers $s_{0}$ and $s_{1}$, with $s_{0}\leq s_{1}$, and positive numbers $c_{0}$ and $c_{1}$ such that
\begin{equation}\label{f2.3}
c_{0}\lambda^{s_{0}}\leq\frac{\varphi(\lambda t)}{\varphi(t)}\leq
c_{1}\lambda^{s_{1}}\quad\mbox{for all}\quad t\geq1\quad\mbox{and}\quad\lambda\geq1.
\end{equation}
Let $\varphi\in\mathrm{OR}$; considering the left-hand side of the inequality \eqref{f2.3} in the $t=1$ case, we conclude that $\varphi(\lambda)\geq\mathrm{const}\cdot e^{-\lambda}$ whenever $\lambda\geq1$. Hence, the identity mapping on $\mathrm{Dom}\,\varphi(A)$ extends uniquely to a continuous embedding operator $H^{\varphi}_{A}\hookrightarrow H^{1/\exp}_{A}$. This will be shown in the first two paragraphs of the proof of Theorem~\ref{th2.6}, in which we put $\varphi_{1}(t):=\varphi(t)$ and $\varphi_{2}(t):=e^{-t}$.
Here, of course, $H^{1/\exp}_{A}$ denotes the Hilbert space $H^{\chi}_{A}$ parametrized with the function $\chi(t):=e^{-t}$ of $t\geq1$. Therefore, we will interpret $H^{\varphi}_{A}$ as a linear manifold in $H^{1/\exp}_{A}$.

Thus, all the spaces $H^{\varphi}_{A}$ parametrized with $\varphi\in\mathrm{OR}$ and, hence, all the spaces from the extended $A$-scale lie in the same space $H^{1/\exp}_{A}$, which enables us to compare them.

\begin{theorem}\label{th2.1}
A Hilbert space $X$ belongs to the extended $A$-scale if and only if $X=\nobreak H^{\varphi}_{A}$ up to equivalence of norms for certain $\varphi\in\mathrm{OR}$.
\end{theorem}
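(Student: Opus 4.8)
The plan is to prove both implications by reducing to Ovchinnikov's theorem \cite[Theorem 11.4.1]{Ovchinnikov84}, which describes all interpolation Hilbert spaces between a compatible pair of Hilbert spaces in terms of the spectral decomposition of the ``relative operator'' associated to the pair. Concretely, for the normal pair $[H^{s_0}_A,H^{s_1}_A]$ the natural generating operator is (a function of) $A$ itself: passing through the isometric isomorphism $B$ from \eqref{f2.2} with $\varphi(t)=t^{s_0}$, one checks that the pair $[H^{s_0}_A,H^{s_1}_A]$ is unitarily equivalent, as a compatible couple, to a pair $[H,\mathcal H]$ in which the embedding $\mathcal H\hookrightarrow H$ is realized by the bounded self-adjoint operator $A^{s_0-s_1}$. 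Ovchinnikov's theorem then says that the interpolation Hilbert spaces for $[H^{s_0}_A,H^{s_1}_A]$ are, up to equivalence of norms, exactly the spaces obtained by applying a Borel function $\psi$ of the operator $A^{s_1-s_0}$, subject to a quasi-monotonicity/pseudo-concavity condition on $\psi$ that guarantees the interpolation property. Rewriting $\psi(A^{s_1-s_0})$ as $\varphi(A)$ with $\varphi(t):=t^{s_0}\psi(t^{s_1-s_0})$, the admissibility condition on $\psi$ translates precisely into the two-sided bound $c_0\lambda^{0}\le\varphi(\lambda t)/\varphi(t)\le c_1\lambda^{s_1-s_0}$ of the form \eqref{f2.3}, i.e. into $\varphi\in\mathrm{OR}$ with exponents between $s_0$ and $s_1$.

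For the ``only if'' direction I would argue as follows. Let $X$ belong to the extended $A$-scale, so $X$ is an interpolation space for some pair $[H^{s_0}_A,H^{s_1}_A]$ with $s_0<s_1$. Apply the translated Ovchinnikov description above to obtain $\varphi\in\mathrm{OR}$ with $X=H^\varphi_A$ up to equivalence of norms; the bounds on the Matuszewska-type indices of the admissible function $\psi$ give exactly a pair of exponents $s_0\le s_0'\le s_1'\le s_1$ for which \eqref{f2.3} holds, hence $\varphi\in\mathrm{OR}$ by the very definition of that class (or by Seneta's representation). The only care needed here is that Ovchinnikov's theorem is stated for general compatible Hilbert couples and for a possibly non-injective relative operator; our pair is normal and the relative operator $A^{s_0-s_1}$ is injective with dense range, which is the simplest case, so no degenerate spectral components appear and the function-calculus bookkeeping is clean.

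For the ``if'' direction, suppose $\varphi\in\mathrm{OR}$; I must exhibit $s_0<s_1$ such that $H^\varphi_A$ is an interpolation space for $[H^{s_0}_A,H^{s_1}_A]$. Use property \eqref{f2.3}: pick real $s_0',s_1'$ with $c_0\lambda^{s_0'}\le\varphi(\lambda t)/\varphi(t)\le c_1\lambda^{s_1'}$, and set $s_0:=s_0'-1$, $s_1:=s_1'+1$, say, so that the continuous and dense embeddings $H^{s_1}_A\hookrightarrow H^\varphi_A\hookrightarrow H^{s_0}_A$ hold (these follow by comparing $\varphi(\lambda)$ with $\lambda^{s_0}$ and $\lambda^{s_1}$ via the spectral theorem, exactly as in the embedding argument already sketched before Theorem~\ref{th2.1} for $H^\varphi_A\hookrightarrow H^{1/\exp}_A$). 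Thus condition (i) of the definition of interpolation space is met. Then the function $\psi(t):=\varphi(t^{1/(s_1-s_0)})\,t^{-s_0/(s_1-s_0)}$ is, by \eqref{f2.3}, equivalent to a non-decreasing function that is pseudo-concave (the quotient $\psi(\lambda t)/\psi(t)$ is bounded by $\max\{1,\lambda\}$ up to a constant), which is precisely Ovchinnikov's sufficient condition for $\psi$ of the relative operator to produce an interpolation space; translating back, $H^\varphi_A$ is an interpolation space for $[H^{s_0}_A,H^{s_1}_A]$, so $X:=H^\varphi_A$ lies in the extended $A$-scale. Finally, invariance of properties (i)--(ii) under passage to an equivalent norm lets us replace $=$ by ``$=$ up to equivalence of norms'' throughout.

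I expect the main obstacle to be the faithful translation of Ovchinnikov's abstract criterion into the concrete language of $\mathrm{OR}$-functions of $A$: one must match his normalization of the relative operator and his admissibility condition (stated via the $K$-functional or via quasi-concave majorants) with the multiplicative bound \eqref{f2.3}, and verify that ``equivalence of norms'' on both sides corresponds correctly. A secondary technical point is handling the completions carefully — $H^\varphi_A$ is defined as a completion of $\mathrm{Dom}\,\varphi(A)$, and one must confirm that the isometric isomorphism $B$ of \eqref{f2.2} identifies this completion with the space predicted by the interpolation construction, rather than merely with a dense subspace; but this is exactly the content of the paragraph around \eqref{f2.2} and is routine.
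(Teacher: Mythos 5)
Your proposal is correct and follows essentially the same route as the paper: Ovchinnikov's theorem describes the interpolation Hilbert spaces for $[H^{s_0}_A,H^{s_1}_A]$ as the spaces $[H^{s_0}_A,H^{s_1}_A]_\psi$ with $\psi$ an interpolation parameter, the substitution $\varphi(t)=t^{s_0}\psi(t^{s_1-s_0})$ identifies these with $H^{\varphi}_A$ and converts the pseudoconcavity of $\psi$ into condition \eqref{f2.3} (the paper packages exactly this as Theorems~\ref{th2.3}, \ref{th2.5}, \ref{th2.6} and Propositions~\ref{prop3.1}, \ref{prop3.2}), and the sufficiency step chooses $s_0:=\sigma_0(\varphi)-1$ and $s_1:=\sigma_1(\varphi)+1$ just as you do. The only slip is notational: your displayed bound $c_0\lambda^{0}\le\varphi(\lambda t)/\varphi(t)\le c_1\lambda^{s_1-s_0}$ should read $c_0\lambda^{s_0}\le\varphi(\lambda t)/\varphi(t)\le c_1\lambda^{s_1}$ once the factor $t^{s_0}$ is accounted for, as your subsequent text already assumes.
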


\begin{remark}\label{rem2.2}
We cannot transfer from the extended $A$-scale to a wider class of spaces by means of interpolation Hilbert spaces between any spaces from this scale. Namely, suppose that certain Hilbert spaces $H_{0}$ and $H_{1}$ belong to the extended $A$-scale and satisfy the continuous embedding $H_{1}\hookrightarrow H_{0}$. Then every Hilbert space $X$ which is an interpolation one for the pair $[H_{0},H_{1}]$ belongs to this scale as well. Indeed, for each $j\in\{0,1\}$, the space $H_{j}$ is an interpolation one for a certain pair $[H^{s_{j,0}}_{A},H^{s_{j,1}}_{A}]$, where $s_{j,0}<s_{j,1}$. Besides, both $H^{s_{j,0}}_{A}$ and  $H^{s_{j,1}}_{A}$ are interpolation spaces for the pair $[H^{s_{0}}_{A},H^{s_{1}}_{A}]$ provided that $s_{0}:=\min\{s_{0,0},s_{1,0}\}$ and $s_{1}:=\max\{s_{0,1},s_{1,1}\}$. Hence, the above-mentioned space $X$ is an interpolation one for the latter pair, which follows directly from the given definition of an interpolation space. Thus, $X$ belongs to the extended $A$-scale.
\end{remark}

We will also give an explicit description (up to equivalence of norms) of all Hilbert spaces that are interpolation ones for the given pair
$[H^{s_{0}}_{A},H^{s_{1}}_{A}]$, where $s_{0}<s_{1}$.
Considering $\varphi\in\mathrm{OR}$, we put
\begin{gather}\label{f2.4}
\sigma_{0}(\varphi):=\sup\{s_{0}\in\mathbb{R}\mid\mbox{the left-hand inequality in \eqref{f2.3} holds}\},\\ \label{f2.5}
\sigma_{1}(\varphi):=\inf\{s_{1}\in\mathbb{R}\mid\mbox{the right-hand inequality in \eqref{f2.3} holds}\}.
\end{gather}
Evidently, $-\infty<\sigma_{0}(\varphi)\leq\sigma_{1}(\varphi)<\infty$. The numbers $\sigma_{0}(\varphi)$ and $\sigma_{1}(\varphi)$ are equal to the lower and the upper Matuszewska indices of $\varphi$, respectively (see \cite{Matuszewska64} and \cite[Theorem~2.2.2]{BinghamGoldieTeugels89}).

\begin{theorem}\label{th2.3}
Let $s_{0},s_{1}\in\mathbb{R}$ and $s_{0}<s_{1}$. A Hilbert space $X$ is an interpolation space for the pair $[H^{s_{0}}_{A},H^{s_{1}}_{A}]$
if and only if $X=H^{\varphi}_{A}$ up to equivalence of norms for a certain function parameter $\varphi\in\mathrm{OR}$ that satisfies condition~\eqref{f2.3}.
\end{theorem}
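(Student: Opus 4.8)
The plan is to reduce the pair $[H^{s_{0}}_{A},H^{s_{1}}_{A}]$ to one of the form $[H,H^{\delta}_{A}]$ and then to invoke Ovchinnikov's description of the interpolation Hilbert spaces of a couple. Put $\delta:=s_{1}-s_{0}>0$. The mapping $u\mapsto A^{s_{0}}u$ defined on $H^{\infty}_{A}$ extends by continuity to an isometric isomorphism $A^{s_{0}}\colon H^{s_{0}}_{A}\leftrightarrow H$, and since $\|A^{\delta}A^{s_{0}}u\|=\|A^{s_{1}}u\|=\|u\|_{s_{1}}$ for $u\in H^{\infty}_{A}$, this isomorphism carries $H^{s_{1}}_{A}$ isometrically onto $H^{\delta}_{A}=\mathrm{Dom}\,A^{\delta}$. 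Hence $[H^{s_{0}}_{A},H^{s_{1}}_{A}]$ is isometrically isomorphic to the normal pair $[H,H^{\delta}_{A}]$, and a Hilbert space $X$ is an interpolation space for the first pair if and only if $A^{s_{0}}(X)$ is one for the second. The positive definite self-adjoint operator generating this second couple is $J:=A^{\delta}\geq1$: the norm of $H$ is $\|\cdot\|$, the norm of $H^{\delta}_{A}$ is $\|J\cdot\|$, and $\mathrm{Dom}\,J=H^{\delta}_{A}$.

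Next I would apply Ovchinnikov's theorem \cite[Theorem~11.4.1]{Ovchinnikov84}: up to equivalence of norms, the interpolation spaces of the Hilbert couple $[H,H^{\delta}_{A}]$ are exactly the spaces obtained from it by quadratic interpolation with a function parameter. Since $\mathrm{Spec}\,J\subseteq[1,\infty)$, such a space is the completion $H^{\psi}_{J}$ of $\mathrm{Dom}\,\psi(J)$ in the norm $\|\psi(J)\cdot\|$, where $\psi\colon[1,\infty)\to(0,\infty)$ is Borel measurable and pseudoconcave, i.e., there is $c\geq1$ with $c^{-1}\leq\psi(\lambda t)/\psi(t)\leq c\lambda$ for all $t\geq1$ and $\lambda\geq1$. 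The key observation is that this is precisely condition \eqref{f2.3} in the case $s_{0}=0$, $s_{1}=1$; in particular every such $\psi$ belongs to $\mathrm{OR}$ (restrict to $\lambda\in[1,2]$).

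It then remains to transport this description back along $A^{s_{0}}$ together with the substitution $t\mapsto t^{1/\delta}$ of the spectral variable. I would check that
\begin{equation*}
\varphi(t):=\psi\bigl(t^{\delta}\bigr)\,t^{s_{0}},\qquad
\psi(\tau):=\varphi\bigl(\tau^{1/\delta}\bigr)\,\tau^{-s_{0}/\delta}\qquad(t,\tau\geq1),
\end{equation*}
are mutually inverse and set up a bijection between the pseudoconcave functions on $[1,\infty)$ and the functions $\varphi\in\mathrm{OR}$ satisfying \eqref{f2.3} with the prescribed $s_{0},s_{1}$: from $\varphi(\lambda t)/\varphi(t)=\bigl(\psi(\lambda^{\delta}t^{\delta})/\psi(t^{\delta})\bigr)\lambda^{s_{0}}$ the bounds $c^{-1}\leq\psi(\lambda^{\delta}t^{\delta})/\psi(t^{\delta})\leq c\lambda^{\delta}$ become $c^{-1}\lambda^{s_{0}}\leq\varphi(\lambda t)/\varphi(t)\leq c\lambda^{s_{0}+\delta}=c\lambda^{s_{1}}$, and conversely. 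Taking $t=1$ here one moreover reads off $H^{s_{1}}_{A}\hookrightarrow H^{\varphi}_{A}\hookrightarrow H^{s_{0}}_{A}$, so $H^{\varphi}_{A}$ is automatically intermediate for the pair. Finally, multiplicativity of the spectral calculus of $A$ gives $\psi(A^{\delta})A^{s_{0}}=\varphi(A)$ on $H^{\infty}_{A}$, whence $A^{s_{0}}$ extends to an isometric isomorphism $H^{\varphi}_{A}\leftrightarrow H^{\psi}_{J}$ (both being completions of $H^{\infty}_{A}$ in the respective norms). Chaining the three steps---reduction, Ovchinnikov, transport---proves the equivalence in the theorem; for the ``if'' direction one additionally uses that $H^{\psi}_{J}$ is an (exact) interpolation space whenever $\psi$ is pseudoconcave, the classical half of the quadratic interpolation theory, also contained in the cited result.

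The only real obstacle I anticipate is stating Ovchinnikov's theorem in exactly the form required: that \emph{every} interpolation Hilbert space of a couple is produced by quadratic interpolation with a function parameter, and that the admissible parameters are precisely the pseudoconcave functions---which on $[1,\infty)$ coincide with the $(s_{0},s_{1})=(0,1)$ instance of \eqref{f2.3}. Granting this, the remainder is routine bookkeeping with the functional calculus of $A$ and the power change of variable $t\mapsto t^{\delta}$.
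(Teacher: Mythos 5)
Your argument is correct, and it rests on the same two external inputs as the paper's proof---Ovchinnikov's description of all interpolation Hilbert spaces of a regular couple (Proposition~\ref{prop3.1}) and Peetre's identification of the admissible interpolation parameters with the pseudoconcave functions---but it routes the computation differently. The paper applies Proposition~\ref{prop3.1} directly to $[H^{s_{0}}_{A},H^{s_{1}}_{A}]$ and then converts $[H^{s_{0}}_{A},H^{s_{1}}_{A}]_{\psi}$ into $H^{\varphi}_{A}$ by means of Theorem~\ref{th2.6} (whose proof, the technical core of Section~\ref{sec3}, constructs the generating operator $J=B_{0}^{-1}B_{1}$ of a general couple $[H^{\varphi_{0}}_{A},H^{\varphi_{1}}_{A}]$ and reduces it to a multiplication operator via a spectral representation of $A$) together with Proposition~\ref{prop3.2}, which is precisely your change-of-variables equivalence between pseudoconcavity of $\psi$ on $[1,\infty)$ and condition \eqref{f2.3} for $\varphi$. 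You instead conjugate the couple by $A^{s_{0}}$ into the normalized couple $[H,H^{\delta}_{A}]$, whose generating operator is visibly $J=A^{\delta}$ acting in $H$ itself, so the identification of $\mathcal{H}_{\psi}$ with $H^{\varphi}_{A}$, $\varphi(t)=t^{s_{0}}\psi(t^{\delta})$, collapses to the functional-calculus identity $\psi(A^{\delta})A^{s_{0}}=\varphi(A)$. This buys an essentially self-contained proof of Theorem~\ref{th2.3} (modulo Ovchinnikov and Peetre) that bypasses the long argument of Theorem~\ref{th2.6}; the price is that it exploits the multiplicativity of power weights, whereas Theorem~\ref{th2.6} treats arbitrary couples $[H^{\varphi_{0}}_{A},H^{\varphi_{1}}_{A}]$ and is needed elsewhere in the paper. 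If you write this up, make two routine points explicit: the transfer statement that $X$ is an interpolation space for $[H^{s_{0}}_{A},H^{s_{1}}_{A}]$ if and only if $A^{s_{0}}(X)$ is one for $[H,H^{\delta}_{A}]$ (verified by conjugating test operators by $A^{\pm s_{0}}$), and the density of $H^{\infty}_{A}$ in $H^{\varphi}_{A}$ for $\varphi\in\mathrm{OR}$ (by spectral truncation), which you need in order to extend the isometry from the common core to the completed spaces.
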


\begin{remark}\label{rem2.4}
Of course, we mean in Theorem~\ref{th2.3} that the positive numbers $c_{0}$ and $c_{1}$ in condition \eqref{f2.3} depend neither on $t$ nor on $\lambda$. Evidently, this condition is equivalent to the following pair of conditions:
\begin{enumerate}
\item [$\mathrm{(i)}$] $s_{0}\leq\sigma_{0}(\varphi)$ and, moreover,
$s_{0}<\sigma_{0}(\varphi)$ if the supremum in $\eqref{f2.4}$ is not attained;
\item [$\mathrm{(ii)}$] $\sigma_{1}(\varphi)\leq s_{1}$ and, moreover, $\sigma_{1}(\varphi)<s_{1}$ if the infimum in $\eqref{f2.5}$ is not attained.
\end{enumerate}
\end{remark}

It is important for applications that the extended $A$-scale can be obtained by means of the quadratic interpolation (with function parameter) between spaces from $A$-scale. Before we formulate a relevant theorem, we will recall the definition of the quadratic interpolation between Hilbert spaces. This interpolation is a natural generalization of the classical interpolation method by J.-L.~Lions \cite{Lions58} and S.~G.~Krein \cite{Krein60a} (see also the book \cite[Chapter~1, Sections 2 and~5]{LionsMagenes72} and survey \cite[Section~9]{KreinPetunin66}) to the case where a general enough function is used, instead of the number $\theta\in(0,\,1)$, as an interpolation parameter. The generalization first appeared in C.~Foia\c{s} and J.-L.~Lions' paper \cite[Section~3.4]{FoiasLions61}. We mainly follow monograph \cite[Section~1.1]{MikhailetsMurach14} (see also \cite[Section~2.1]{MikhailetsMurach08MFAT1}).

Let $\mathcal{B}$ denote the set of all Borel measurable functions $\psi:(0,\infty)\rightarrow(0,\infty)$ such that $\psi$ is bounded on each compact interval $[a,b]$, with $0<a<b<\infty$, and that $1/\psi$ is bounded on every set $[r,\infty)$, with $r>0$. We arbitrarily choose a function $\psi\in\mathcal{B}$ and a regular pair $\mathcal{H}:=[H_{0},H_{1}]$ of separable complex Hilbert spaces. The regularity of this pair means that $H_{1}$ is a dense linear manifold in $H_{0}$ and that the embedding $H_{1}\hookrightarrow H_{0}$ is continuous. For $\mathcal{H}$ there exists a positive definite self-adjoint linear operator $J$ in $H_{0}$ such that $\mathrm{Dom}\,J=H_{1}$ and that $\|Ju\|_{H_{0}}=\|u\|_{H_{1}}$ for every $u\in H_{1}$. The operator $J$ is uniquely determined by the pair $\mathcal{H}$ and is called the generating operator for this pair.

Using the spectral decomposition of $J$, we define the self-adjoint operator $\psi(J)$ in $H_{0}$. Let $[H_{0},H_{1}]_{\psi}$ or, simply, $\mathcal{H}_{\psi}$ denote the domain of $\psi(J)$ endowed with the inner product $(u_{1},u_{2})_{\mathcal{H}_{\psi}}:=(\psi(J)u_{1},\psi(J)u_{2})_{H_{0}}$ and the corresponding norm $\|u\|_{\mathcal{H}_{\psi}}=\|\psi(J)u\|_{H_{0}}$, with $u,u_{1},u_{2}\in\mathcal{H}_{\psi}$. The space $\mathcal{H}_{\psi}$ is Hilbert and separable.

A function $\psi\in\mathcal{B}$ is called an interpolation parameter if the following condition is fulfilled for all regular pairs $\mathcal{H}=[H_{0},H_{1}]$ and
$\mathcal{G}=[G_{0},G_{1}]$ of separable complex Hilbert spaces and for an arbitrary linear mapping $T$ given on $H_{0}$: if the restriction of $T$ to $H_{j}$ is a bounded operator $T:H_{j}\rightarrow G_{j}$ for each $j\in\{0,1\}$, then the restriction of $T$ to
$\mathcal{H}_{\psi}$ is also a bounded operator $T:\mathcal{H}_{\psi}\to\mathcal{G}_{\psi}$. If $\psi$ is an interpolation parameter, we will say that the Hilbert space
$\mathcal{H}_{\psi}$ is obtained by the quadratic interpolation with the function parameter~$\psi$ of the pair $\mathcal{H}$ (or, in other words, between the spaces $H_{0}$ and $H_{1}$). In this case, the dense continuous embeddings $H_{1}\hookrightarrow\mathcal{H}_{\psi}\hookrightarrow H_{0}$ hold true.

A function $\psi\in\mathcal{B}$ is an interpolation parameter if and only if $\psi$ is pseudoconcave in a neighbourhood of infinity. The latter property means that there exists a number $r>0$ and a concave function $\psi_{1}:(r,\infty)\rightarrow(0,\infty)$ that both functions $\psi/\psi_{1}$ and $\psi_{1}/\psi$ are bounded on $(r,\infty)$. This key fact follows from J.~Peetre's \cite{Peetre66, Peetre68} description of all interpolation functions for the weighted $L_{p}(\mathbb{R}^{n})$-type spaces (the description is also set forth in monograph \cite[Theorem 5.4.4]{BerghLefstrem76}).

The above-mentioned interpolation property of the extended $A$-scale is formulated as follows:

\begin{theorem}\label{th2.5}
Let $\varphi\in\mathrm{OR}$, and let real numbers $s_{0}<s_{1}$ be taken from condition~\eqref{f2.3}. Put
\begin{equation}\label{f2.6}
\psi(\tau):=
\begin{cases}
\;\tau^{-s_{0}/(s_{1}-s_{0})}\,\varphi(\tau^{1/(s_{1}-s_{0})}) &\text{whenever}\quad\tau\geq1, \\
\;\varphi(1) & \text{whenever}\quad0<\tau<1.
\end{cases}
\end{equation}
Then the function $\psi$ belongs to $\mathcal{B}$ and is an interpolation parameter, and
\begin{equation}\label{f2.7}
\bigl[H^{s_{0}}_{A},H^{s_{1}}_{A}\bigr]_{\psi}=H^{\varphi}_{A}
\quad\mbox{with equality of norms}.
\end{equation}
\end{theorem}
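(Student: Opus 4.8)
The plan is to establish, in order, the three assertions of the theorem: that $\psi\in\mathcal{B}$, that $\psi$ is an interpolation parameter, and finally the isometric equality~\eqref{f2.7}. Throughout put $\delta:=s_1-s_0>0$, so that $\psi(\tau)=\tau^{-s_0/\delta}\varphi(\tau^{1/\delta})$ for $\tau\ge1$ while $\psi$ is the constant $\varphi(1)$ on $(0,1)$. Taking $t=1$ in \eqref{f2.3} gives the two-sided bound $c_0\varphi(1)\lambda^{s_0}\le\varphi(\lambda)\le c_1\varphi(1)\lambda^{s_1}$ for $\lambda\ge1$. From this one reads off at once that $\psi$ is Borel measurable and positive, that $\psi$ is bounded on every compact subinterval of $(0,\infty)$, and that $1/\psi$ is bounded on each half-line $[r,\infty)$, because $\psi$ is constant on $(0,1)$ and $\psi(\tau)=\tau^{-s_0/\delta}\varphi(\tau^{1/\delta})\ge c_0\varphi(1)$ for $\tau\ge1$. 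Hence $\psi\in\mathcal{B}$.

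To show that $\psi$ is an interpolation parameter I would invoke the criterion recalled above and verify that $\psi$ is pseudoconcave in a neighbourhood of infinity. Substituting $t=\tau^{1/\delta}$ and $\lambda=\mu^{1/\delta}$ into \eqref{f2.3} and cancelling the factor $\mu^{-s_0/\delta}$ yields
\begin{equation*}
c_0\le\frac{\psi(\mu\tau)}{\psi(\tau)}\le c_1\mu\qquad\text{for all}\quad\tau\ge1\ \text{and}\ \mu\ge1 .
\end{equation*}
Consequently $\psi(\tau_1)\le C\max\{1,\tau_1/\tau_2\}\,\psi(\tau_2)$ for all $\tau_1,\tau_2\ge1$ with $C:=\max\{c_1,1/c_0\}$, i.e. the restriction of $\psi$ to $[1,\infty)$ is quasiconcave; and it is classical that a quasiconcave function is equivalent on $[1,\infty)$ to a concave one (its least concave majorant). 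Thus there is a concave $\psi_1$ with $\psi/\psi_1$ and $\psi_1/\psi$ bounded, so $\psi$ is pseudoconcave near infinity and therefore an interpolation parameter. (Alternatively, one may cite the known fact that a function produced from an $\mathrm{OR}$-varying function by such a power substitution is always an interpolation parameter.)

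For \eqref{f2.7} I would first identify the generating operator $J$ of the regular pair $\mathcal{H}:=[H^{s_0}_A,H^{s_1}_A]$. The isometric isomorphism $A^{s_0}:H^{s_0}_A\leftrightarrow H$ (the power case of \eqref{f2.2}) maps $H^{s_1}_A$ isometrically onto $\mathrm{Dom}\,A^{\delta}=H^{\delta}_A$, since $\|A^{\delta}(A^{s_0}u)\|=\|A^{s_1}u\|=\|u\|_{s_1}$ on the dense set $\mathrm{Dom}\,A^{s_1}$ and $A^{s_0}(\mathrm{Dom}\,A^{s_1})=\mathrm{Dom}\,A^{\delta}$; hence $A^{s_0}$ carries $\mathcal{H}$ onto $[H,H^{\delta}_A]$, whose generating operator is $A^{\delta}$, so $J=(A^{s_0})^{-1}A^{\delta}A^{s_0}$ and therefore $\psi(J)=(A^{s_0})^{-1}\psi(A^{\delta})A^{s_0}$ by the spectral theorem. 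Since $\mathrm{Spec}\,A\subseteq[1,\infty)$ and there $\psi(\lambda^{\delta})\lambda^{s_0}=\lambda^{-s_0}\varphi(\lambda)\lambda^{s_0}=\varphi(\lambda)$, the functional calculus gives $\psi(A^{\delta})A^{s_0}u=\varphi(A)u$ for $u\in H^{\infty}_A$, whence $\|u\|_{\mathcal{H}_{\psi}}=\|\psi(J)u\|_{s_0}=\|\psi(A^{\delta})A^{s_0}u\|=\|\varphi(A)u\|=\|u\|_{\varphi}$ on $H^{\infty}_A$. Finally, $H^{\infty}_A$ is dense in $\mathcal{H}_{\psi}$ (being dense in $H^{s_1}_A$, which is dense in $\mathcal{H}_{\psi}$, and being contained in $\mathrm{Dom}\,\psi(J)$ because $\psi(\lambda^{\delta})\lambda^{s_0}=\varphi(\lambda)\le\mathrm{const}\cdot\lambda^{s_1}$) and dense in $H^{\varphi}_A$ (by spectral truncation of vectors of $\mathrm{Dom}\,\varphi(A)$ in the norm $\|\cdot\|_{\varphi}$); since both spaces embed continuously into $H^{s_0}_A$ — hence into $H^{1/\exp}_A$ — via maps that act as the identity on $H^{\infty}_A$, on which the two norms coincide, the two completions coincide, which is \eqref{f2.7}.

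The main obstacle is the second step, namely verifying that $\psi$ is an interpolation parameter: this is the only place where something beyond routine spectral calculus is needed, i.e. Peetre's characterization of interpolation functions together with the passage from the quasiconcavity estimate to a genuinely concave comparison function. The other delicate point is the operator-theoretic bookkeeping in computing the generating operator $J$ of the pair and the operator $\psi(J)$, but that follows a standard pattern once the isometry $A^{s_0}:H^{s_0}_A\leftrightarrow H$ is used to reduce to the model pair $[H,H^{\delta}_A]$.
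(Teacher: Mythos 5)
Your proposal is correct in substance but takes a genuinely different route from the paper's. The paper checks $\psi\in\mathcal{B}$ exactly as you do, then gets the interpolation-parameter claim by citing Proposition~\ref{prop3.2} (via the identity $\varphi(t)=t^{s_{0}}\psi(t^{s_{1}-s_{0}})$), and obtains \eqref{f2.7} as the special case $\varphi_{0}(t)\equiv t^{s_{0}}$, $\varphi_{1}(t)\equiv t^{s_{1}}$ of the general Theorem~\ref{th2.6}, whose proof contains all the spectral-theoretic work. You instead verify pseudoconcavity directly: your substitution turning \eqref{f2.3} into $c_{0}\leq\psi(\mu\tau)/\psi(\tau)\leq c_{1}\mu$ is correct, and together with Peetre's least-concave-majorant lemma it gives a self-contained proof of the relevant direction of Proposition~\ref{prop3.2}. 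For \eqref{f2.7} you conjugate the pair to $[H,H^{\delta}_{A}]$ by the isometry extending $A^{s_{0}}$, compute $\psi(J)$ by the functional calculus, and finish with a ``two completions of a common dense subspace with equal norms inside a Hausdorff ambient space'' argument; that closing argument is sound and neatly replaces the paper's explicit operators $L$ and $M$. What your route buys is independence from the general Theorem~\ref{th2.6}; what it costs is that you must redo, for the power pair, the one genuinely technical point of that theorem's proof.

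That point is your parenthetical justification ``$A^{s_{0}}(\mathrm{Dom}\,A^{s_{1}})=\mathrm{Dom}\,A^{\delta}$.'' As literally stated it fails when $s_{1}<0$: there $\mathrm{Dom}\,A^{s_{1}}=H$, the genuine operator $A^{s_{0}}$ maps $H$ onto $\mathrm{Dom}\,A^{-s_{0}}$, and $-s_{0}>\delta$, so the image is in general strictly smaller than $\mathrm{Dom}\,A^{\delta}$. The statement you actually need is that the \emph{extended} isometry $B_{0}\colon H^{s_{0}}_{A}\leftrightarrow H$ carries the completion $H^{s_{1}}_{A}$ onto $\mathrm{Dom}\,A^{\delta}$; the inclusion $\subseteq$ is the easy spectral computation, but the reverse inclusion requires a limiting argument (this is exactly what formulas \eqref{f3.16}--\eqref{f3.21} of the paper supply in the general setting). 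Without it you only know $J\subseteq B_{0}^{-1}A^{\delta}B_{0}$, which does not yet identify the spectral decomposition of $J$ and hence does not license the step $\psi(J)=B_{0}^{-1}\psi(A^{\delta})B_{0}$. The gap is routine to fill (e.g.\ by spectral truncations $E([1,n])$), and nothing else in your argument is affected, but as written the proof of \eqref{f2.7} is complete only for $s_{1}\geq0$.
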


For instance, considering the function $\varphi(t):=1+\log t$ of $t\geq1$ from the class $\mathrm{OR}$, we can take $s_{0}:=0$ and $s_{1}:=\varepsilon$ for every $\varepsilon>0$ and put $\psi(\tau):=1+\varepsilon^{-1}\log\tau$ whenever $\tau\geq1$ in the interpolation formula \eqref{f2.7}.

Note that, if $s_{0}<\sigma_{0}(\varphi)$ and $s_{1}>\sigma_{1}(\varphi)$, the numbers $s_{0}$ and $s_{1}$ will satisfy the condition of Theorem~\ref{th2.5} whatever $\varphi\in\mathrm{OR}$.

The extended $A$-scale is closed with respect to the quadratic interpolation (with function parameter). This follows directly from the next two results.

\begin{theorem}\label{th2.6}
Let $\varphi_{0},\varphi_{1}:[1,\infty)\to(0,\infty)$ be Borel measurable functions. Suppose that the function $\varphi_{0}/\varphi_{1}$ is bounded on $[1,\infty)$. Then the pair $[H^{\varphi_{0}}_{A},H^{\varphi_{1}}_{A}\bigr]$ is regular. Let $\psi\in\mathcal{B}$, and put
\begin{equation}\label{f2.8}
\varphi(t):=\varphi_{0}(t)\,\psi
\biggl(\frac{\varphi_{1}(t)}{\varphi_{0}(t)}\biggr)
\quad\mbox{whenever}\quad t\geq1.
\end{equation}
Then
\begin{equation}\label{f2.9}
\bigl[H^{\varphi_{0}}_{A},H^{\varphi_{1}}_{A}\bigr]_{\psi}=H^{\varphi}_{A}
\quad\mbox{with equality of norms}.
\end{equation}
\end{theorem}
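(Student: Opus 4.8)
The plan is to realize both sides of \eqref{f2.9} as explicit functions of the operator $A$ and then to verify that the function of $A$ produced by the interpolation procedure is exactly $\varphi(A)$ with $\varphi$ given by \eqref{f2.8}. First I would establish regularity of the pair $[H^{\varphi_{0}}_{A},H^{\varphi_{1}}_{A}]$. Denseness follows because $H^{\infty}_{A}$ is dense in each $H^{\varphi_{j}}_{A}$ (one checks this from the spectral theorem, approximating $u\in\mathrm{Dom}\,\varphi_{j}(A)$ by its spectral truncations), and continuity of the embedding $H^{\varphi_{1}}_{A}\hookrightarrow H^{\varphi_{0}}_{A}$ follows from boundedness of $\varphi_{0}/\varphi_{1}$ on $[1,\infty)$: since $\mathrm{Spec}\,A\subseteq[1,\infty)$, the operator $\varphi_{0}(A)\varphi_{1}(A)^{-1}$ is bounded on $H$ with norm $\sup_{t\ge1}\varphi_{0}(t)/\varphi_{1}(t)$, hence $\|u\|_{\varphi_{0}}=\|\varphi_{0}(A)\varphi_{1}(A)^{-1}\varphi_{1}(A)u\|\le(\sup\varphi_{0}/\varphi_{1})\,\|u\|_{\varphi_{1}}$ for $u\in\mathrm{Dom}\,\varphi_{1}(A)$; this is the place where the first two paragraphs referred to in the text do their work (with $\varphi_{1}=\varphi$ and $\varphi_{2}=1/\exp$ there). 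Normalizing, I may as well assume $\varphi_{0}\le\varphi_{1}$ on $[1,\infty)$ after replacing $\psi$ by a bounded-ratio modification, or simply carry the constant.

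Next I would identify the generating operator $J$ of the regular pair $\mathcal{H}:=[H^{\varphi_{0}}_{A},H^{\varphi_{1}}_{A}]$. Recall $J$ is the positive definite self-adjoint operator in $H_{0}=H^{\varphi_{0}}_{A}$ with $\mathrm{Dom}\,J=H_{1}=H^{\varphi_{1}}_{A}$ and $\|Ju\|_{\varphi_{0}}=\|u\|_{\varphi_{1}}$. Using the isometric isomorphisms $B_{j}:H^{\varphi_{j}}_{A}\leftrightarrow H$ from \eqref{f2.2} (so $B_{j}$ extends $\varphi_{j}(A)$), the claim is that $J=B_{0}^{-1}\,\bigl(\varphi_{1}/\varphi_{0}\bigr)(A)\,B_{0}$, i.e. $J$ acts on $H^{\varphi_{0}}_{A}$ as the function $(\varphi_{1}/\varphi_{0})(A)$ transported to that space; equivalently, on the common core $H^{\infty}_{A}$ one has $Ju=\varphi_{0}(A)^{-1}\varphi_{1}(A)u$. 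I would verify the three defining properties: self-adjointness and positive definiteness are immediate since $\varphi_{1}/\varphi_{0}\ge$ const $>0$ on $[1,\infty)$ by boundedness of $\varphi_{0}/\varphi_{1}$; the domain condition $\mathrm{Dom}\,J=H^{\varphi_{1}}_{A}$ holds because $u\in\mathrm{Dom}\,J$ iff $(\varphi_{1}/\varphi_{0})(A)\varphi_{0}(A)u\in H$ iff $\varphi_{1}(A)u\in H$; and the isometry condition $\|Ju\|_{\varphi_{0}}=\|\varphi_{0}(A)(\varphi_{1}/\varphi_{0})(A)u\|=\|\varphi_{1}(A)u\|=\|u\|_{\varphi_{1}}$ is a direct computation. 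Uniqueness of the generating operator then pins $J$ down.

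With $J$ in hand, the space $\mathcal{H}_{\psi}=[H^{\varphi_{0}}_{A},H^{\varphi_{1}}_{A}]_{\psi}$ is by definition $\mathrm{Dom}\,\psi(J)$ with norm $\|\psi(J)u\|_{\varphi_{0}}$. Since $\psi$ is Borel and $J=B_{0}^{-1}(\varphi_{1}/\varphi_{0})(A)B_{0}$, the Borel functional calculus gives $\psi(J)=B_{0}^{-1}\,\psi\!\bigl((\varphi_{1}/\varphi_{0})(A)\bigr)\,B_{0}=B_{0}^{-1}\,\bigl(\psi\circ(\varphi_{1}/\varphi_{0})\bigr)(A)\,B_{0}$, using the composition rule for the functional calculus. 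Therefore for $u$ in the relevant domain $\|u\|_{\mathcal{H}_{\psi}}=\|\psi(J)u\|_{\varphi_{0}}=\|\varphi_{0}(A)\,\psi\!\bigl((\varphi_{1}/\varphi_{0})(A)\bigr)u\|=\|\bigl(\varphi_{0}\cdot(\psi\circ(\varphi_{1}/\varphi_{0}))\bigr)(A)\,u\|=\|\varphi(A)u\|=\|u\|_{\varphi}$, with $\varphi$ exactly as in \eqref{f2.8}; and the domains match, $\mathrm{Dom}\,\psi(J)=\mathrm{Dom}\,\varphi(A)$ completed, both equal to $H^{\varphi}_{A}$. That the completion of $\mathrm{Dom}\,\varphi(A)$ in $\|\cdot\|_{\varphi}$ coincides with all of $\mathcal{H}_{\psi}$ (not just a dense subspace) follows from $\mathcal{H}_{\psi}$ being a complete space containing $\mathrm{Dom}\,\varphi(A)$ densely with the same norm — denseness again via $H^{\infty}_{A}$. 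This establishes \eqref{f2.9} with equality of norms. The one point requiring genuine care — the main obstacle — is the careful bookkeeping between the three ambient completions $H^{\varphi_{0}}_{A}$, $H^{\varphi_{1}}_{A}$, $H^{\varphi}_{A}$ and the operator $H$: one must check that the spectral/composition identity $\psi(J)=B_{0}^{-1}(\psi\circ(\varphi_{1}/\varphi_{0}))(A)B_{0}$ is legitimate even when $0\in\mathrm{Spec}(\varphi_{1}/\varphi_{0})(A)$ (which can happen, e.g. when $\varphi_{1}/\varphi_{0}$ is not bounded below away from what the spectral projections of $A$ see), that the function $\varphi_{1}/\varphi_{0}$ composed with the Borel function $\psi$ is again Borel and lands in $(0,\infty)$, and that $\mathcal{B}$-membership of $\psi$ together with $\mathrm{Spec}\,J\subseteq[\text{const},\infty)$ guarantees $\psi(J)$ is well-defined and injective with dense range — this last point being where the hypothesis $\psi\in\mathcal{B}$ (in particular $1/\psi$ bounded on $[r,\infty)$) is used, rather than $\psi$ being an interpolation parameter; indeed \eqref{f2.9} is an identification of spaces that holds for every $\psi\in\mathcal{B}$, and it is only when one additionally wants $\mathcal{H}_{\psi}$ to be an interpolation space that pseudoconcavity of $\psi$ enters.
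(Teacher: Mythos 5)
Your overall strategy is the same as the paper's: identify the generating operator $J$ of the pair as $(\varphi_{1}/\varphi_{0})(A)$ transported to $H^{\varphi_{0}}_{A}$ by $B_{0}$, then compute $\psi(J)$ by the functional calculus and match it with $\varphi(A)$. The paper carries out the functional-calculus step concretely, by realizing $A$ as multiplication by a function $\alpha$ on some $L_{2}(R,d\mu)$ and showing that $\mathcal{I}B_{0}$ reduces $J$ to multiplication by $(\varphi_{1}/\varphi_{0})\circ\alpha$; your abstract appeal to the composition rule $\psi(J)=B_{0}^{-1}\bigl(\psi\circ(\varphi_{1}/\varphi_{0})\bigr)(A)B_{0}$ is a legitimate shortcut for the same computation. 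However, your sketch has concrete gaps precisely at the points you flag as ``the main obstacle'' but do not resolve, plus one point you do not flag at all.

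First, your density argument for regularity does not work as stated: $H^{\infty}_{A}=\bigcap_{\lambda>0}H^{\lambda}_{A}$ need not be contained in $\mathrm{Dom}\,\varphi_{j}(A)$ for a general Borel $\varphi_{j}$ (take $\varphi_{1}(t)=e^{t}$, or any $\varphi_{j}$ unbounded on a compact set), so ``$H^{\infty}_{A}$ is dense in each $H^{\varphi_{j}}_{A}$'' is false in the generality of the theorem. One must instead approximate $\varphi_{0}(A)u$ in $H$ by vectors in $\mathrm{Dom}\bigl(\varphi_{1}(A)(1/\varphi_{0})(A)\bigr)$ and pull back by $(1/\varphi_{0})(A)$, as the paper does, or truncate over the spectral sets $\{t:\varphi_{1}(t)\leq N\}$ rather than $\{t\leq N\}$. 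Second, you never address injectivity of the continuous extension $I:H^{\varphi_{1}}_{A}\to H^{\varphi_{0}}_{A}$ of the identity mapping; without it one cannot even speak of $H^{\varphi_{1}}_{A}$ as a linear manifold in $H^{\varphi_{0}}_{A}$, and the paper devotes a separate argument to it (again using boundedness of $\varphi_{0}/\varphi_{1}$). Third, your verification that $\mathrm{Dom}\,J=H^{\varphi_{1}}_{A}$ (``$u\in\mathrm{Dom}\,J$ iff $\varphi_{1}(A)u\in H$'') is only valid for $u$ that are genuine vectors of $H$; when $0\in\mathrm{Spec}\,\varphi_{j}(A)$ the completions strictly exceed $\mathrm{Dom}\,\varphi_{j}(A)$, and showing that the maximal domain of the transported multiplication operator coincides with the image of $H^{\varphi_{1}}_{A}$ under the embedding is exactly the paper's identity \eqref{f3.21}, proved there via the auxiliary isometry $L$ of \eqref{f3.18}. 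The same limiting issue recurs at the end, where the equality $H^{\varphi}_{A}=\mathrm{Dom}\,\psi(J)$ (not merely a dense isometric inclusion) requires the analogous argument with the operator $M$ of \eqref{f3.24}. Your final remark that \eqref{f2.9} holds for every $\psi\in\mathcal{B}$ without pseudoconcavity is correct and consistent with the paper.
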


\begin{proposition}\label{prop2.7}
Let $\varphi_{0},\varphi_{1}\in\mathrm{OR}$ and $\psi\in\mathcal{B}$. Suppose that the function $\varphi_{0}/\varphi_{1}$ is bounded in a neighbourhood of infinity and that $\psi$ is an interpolation parameter. Then the function \eqref{f2.8} belongs to the class $\mathrm{OR}$.
\end{proposition}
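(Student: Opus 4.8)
The plan is to split off the factor $\varphi_{0}$ from \eqref{f2.8}, reduce the claim to a statement about the composition $\psi\circ\rho$ with $\rho:=\varphi_{1}/\varphi_{0}$, and then combine two ingredients recalled in the excerpt: the local two-sided bound $c^{-1}\le\varphi(\lambda t)/\varphi(t)\le c$ characterizing membership in $\mathrm{OR}$, and the fact that an interpolation parameter is pseudoconcave in a neighbourhood of infinity.

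First I would record a few elementary consequences of the definition of $\mathrm{OR}$: the class is stable under multiplication, under taking reciprocals, and under multiplication by positive constants; each $\varphi\in\mathrm{OR}$ is bounded above and below by positive constants on every interval $[1,T]$ (iterate the defining inequality starting from $t=1$); and, most importantly, if $f,g\in\mathrm{OR}$ and $g\ge1$ on $[1,\infty)$, then $f\circ g\in\mathrm{OR}$. The last fact is proved by observing that for $\lambda$ in the interval $[1,a_{g}]$ furnished by $g\in\mathrm{OR}$ the two values $g(t)$ and $g(\lambda t)$ both lie in $[1,\infty)$ and their ratio stays in $[c_{g}^{-1},c_{g}]$; iterating the defining inequality for $f$ a fixed finite number of times (enough that $a_{f}^{n}\ge c_{g}$) then bounds $f(g(\lambda t))/f(g(t))$ above by $c_{f}^{n}$ and below by $c_{f}^{-n}$ for every $t\ge1$. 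Since $\varphi=\varphi_{0}\cdot(\psi\circ\rho)$ by \eqref{f2.8}, with $\varphi_{0}\in\mathrm{OR}$, and since $\rho=\varphi_{1}\cdot(1/\varphi_{0})\in\mathrm{OR}$, it now suffices to show $\psi\circ\rho\in\mathrm{OR}$.

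Next I would use the remaining hypothesis. Boundedness of $\varphi_{0}/\varphi_{1}=1/\rho$ near infinity gives $\delta'>0$ and $R\ge1$ with $\rho(t)\ge\delta'$ for $t\ge R$, and the lower bound for the $\mathrm{OR}$-function $\rho$ on the compact set $[1,R]$ supplies a constant $\delta_{1}>0$ with $\rho(t)\ge\delta_{1}$ for all $t\ge1$. Thus $\rho/\delta_{1}\in\mathrm{OR}$ is $\ge1$ everywhere, and $\psi\circ\rho$ coincides with $\widetilde{\psi}\circ(\rho/\delta_{1})$, where $\widetilde{\psi}(s):=\psi(\delta_{1}s)$ for $s\ge1$. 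By the composition fact above it remains to check $\widetilde{\psi}\in\mathrm{OR}$, i.e., undoing the rescaling, that there is a constant $C\ge1$ with $C^{-1}\le\psi(y)/\psi(x)\le C$ whenever $x,y\ge\delta_{1}$ and $1\le y/x\le2$.

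This last estimate is the \textbf{technical core} of the argument, and the only place where the interpolation-parameter hypothesis enters. Choose $r>0$ and a concave positive function $\psi_{1}$ on $(r,\infty)$ with $\psi/\psi_{1}$ and $\psi_{1}/\psi$ bounded there. A positive concave function on an unbounded interval is non-decreasing, and a short convexity computation---comparing $x$ with the convex combination of the left endpoint $r$ and $y$, using that the limit of $\psi_{1}$ at $r^{+}$ is finite and $\ge0$---gives $\psi_{1}(y)\le\psi_{1}(x)\,(y-r)/(x-r)$ for $r<x\le y$. Imposing $x\ge2r$ (so that $r/x\le1/2$) and $y\le2x$ yields $1\le\psi_{1}(y)/\psi_{1}(x)\le4$, and passing back to $\psi$ via the bounded ratios $\psi/\psi_{1}$, $\psi_{1}/\psi$ gives the required two-sided bound on the region where $x$ and $y$ both exceed a fixed threshold $N$ (a multiple of $r$). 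On the complementary region both $x$ and $y$ lie in a fixed compact subinterval $[\delta_{1},2N]$ of $(0,\infty)$, where $\psi$ is bounded above because $\psi\in\mathcal{B}$ is bounded on compact intervals and bounded below because $1/\psi$ is bounded on $[\delta_{1},\infty)$ by the definition of $\mathcal{B}$; hence $\psi(y)/\psi(x)$ is controlled there as well. The main obstacle is precisely to choose $N$ so that every admissible pair $(x,y)$ falls into one of these two cases and then to collect the finitely many constants. Combining the cases gives $\widetilde{\psi}\in\mathrm{OR}$, whence $\psi\circ\rho\in\mathrm{OR}$ and finally $\varphi=\varphi_{0}\cdot(\psi\circ\rho)\in\mathrm{OR}$.
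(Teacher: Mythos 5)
Your proof is correct. Note first that the paper itself offers no argument for Proposition~\ref{prop2.7}: it simply cites \cite[Theorem~5.2]{MikhailetsMurach15ResMath1}, so yours is a genuinely self-contained alternative rather than a variant of an in-paper proof. The decomposition $\varphi=\varphi_{0}\cdot(\psi\circ\rho)$ with $\rho=\varphi_{1}/\varphi_{0}\in\mathrm{OR}$, the reduction via the composition lemma for $\mathrm{OR}$ (whose iteration argument, choosing $n$ with $a_{f}^{n}\geq c_{g}$ and handling ratios below $1$ by inverting, is sound), and the lower bound $\rho\geq\delta_{1}>0$ obtained from the boundedness of $1/\rho$ near infinity together with the compact-interval bound for $\mathrm{OR}$-functions, all check out. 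The technical core --- the two-sided bound $C^{-1}\leq\psi(y)/\psi(x)\leq C$ for $\delta_{1}\leq x\leq y\leq 2x$ --- is also correct: your convexity computation gives $\psi_{1}(y)\leq\psi_{1}(x)(y-r)/(x-r)\leq 3\,\psi_{1}(x)$ for $x\geq 2r$ (your bound of $4$ is a safe overestimate), monotonicity of the positive concave $\psi_{1}$ gives the lower bound, and the compact complementary region is handled exactly by the two defining properties of $\mathcal{B}$. It is worth observing that this core estimate is precisely the inequality \eqref{f3b.2} that the paper later quotes from \cite[Lemma~1.2]{MikhailetsMurach14} (with $\max\{1,t/\tau\}\leq2$ on your region), so you have in effect reproved that lemma from the definition of pseudoconcavity; citing it would shorten your argument, but deriving it keeps the proof elementary and self-contained.
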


This proposition is contained in \cite[Theorem~5.2]{MikhailetsMurach15ResMath1}.

As to Theorem~\ref{th2.6}, it is necessary to note that its hypothesis allows us to consider $H^{\varphi_{1}}_{A}$ and $H^{\varphi}_{A}$ as linear manifolds in $H^{\varphi_0}_{A}$. Indeed, since the functions $\varphi_{0}/\varphi_{1}$ and $\varphi_{0}/\varphi$ are bounded on $[1,\infty)$, the identity mappings on $\mathrm{Dom}\,\varphi_{1}(A)$ and on $\mathrm{Dom}\,\varphi(A)$ extend uniquely to some continuous
embedding operators $H^{\varphi_{1}}_{A}\hookrightarrow H^{\varphi_{0}}_{A}$ and
$H^{\varphi}_{A}\hookrightarrow H^{\varphi_{0}}_{A}$ respectively (see the first two paragraphs of the proof of Theorem~\ref{th2.6}). Thus, we may
say about the regularity of the pair $[H^{\varphi_{0}}_{A},H^{\varphi_{1}}_{A}]$ and compare the spaces $[H^{\varphi_{0}}_{A},H^{\varphi_{1}}_{A}]_{\psi}$ and $H^{\varphi}_{A}$ in \eqref{f2.9}.

\section{Proofs of basic results}\label{sec3}

We will prove Theorems \ref{th2.1}, \ref{th2.3}, \ref{th2.5}, and \ref{th2.6} in the reverse order, which is stipulated by a remarkable result by Ovchinnikov \cite[Theorem 11.4.1]{Ovchinnikov84}. This result explicitly describes (up to equivalence of norms) all the Hilbert spaces that are interpolation ones for an arbitrary compatible pair of Hilbert spaces.  As to our consideration, Ovchinnikov's theorem can be formulated as follows:

\begin{proposition}\label{prop3.1}
Let $\mathcal{H}:=[H_{0},H_{1}]$ be a regular pair of separable complex Hilbert spaces. A Hilbert space $X$ is an interpolation space for $\mathcal{H}$ if and only if $X=\mathcal{H}_{\psi}$ up to equivalence of norms for a certain interpolation parameter $\psi\in\mathcal{B}$.
\end{proposition}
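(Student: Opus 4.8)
The plan is to obtain Proposition~\ref{prop3.1} from Ovchinnikov's theorem quoted just above it, the bulk of the work being to reconcile his formulation with the present one; the reverse implication is a formality.

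I would first settle the sufficiency. Suppose $\psi\in\mathcal{B}$ is an interpolation parameter and $X=\mathcal{H}_{\psi}$ up to equivalence of norms. Applying the defining property of an interpolation parameter with $\mathcal{G}:=\mathcal{H}$ shows that any linear operator on $H_{0}$ whose restrictions to $H_{0}$ and $H_{1}$ are bounded on these spaces also restricts to a bounded operator on $\mathcal{H}_{\psi}$; this is condition~(ii), while the dense continuous embeddings $H_{1}\hookrightarrow\mathcal{H}_{\psi}\hookrightarrow H_{0}$ recalled in Section~\ref{sec2} give condition~(i). Both conditions are insensitive to passing to an equivalent norm on $X$, so $X$ is an interpolation space for $\mathcal{H}$.

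For the necessity I would bring in the generating operator $J$ of the regular pair $\mathcal{H}=[H_{0},H_{1}]$: $J$ is positive definite and self-adjoint in $H_{0}$, $\mathrm{Dom}\,J=H_{1}$, $\|Ju\|_{H_{0}}=\|u\|_{H_{1}}$, whence $\mathrm{Spec}\,J\subseteq[r,\infty)$ for some $r>0$, and $\mathcal{H}_{\psi}=\mathrm{Dom}\,\psi(J)$ with norm $\|\psi(J)\cdot\|_{H_{0}}$. Ovchinnikov's Theorem~11.4.1, specialized to this Hilbert couple, says that $X$ is an interpolation space for $\mathcal{H}$ if and only if, up to equivalence of norms, $X=\mathrm{Dom}\,f(J)$ with norm $\|f(J)\cdot\|_{H_{0}}$ for some $f$ from his class of admissible parameter functions. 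It then remains to match that class with the interpolation parameters of class $\mathcal{B}$, via two reductions. First, since $\mathrm{Dom}\,f(J)$ and its norm depend only on $f|_{\mathrm{Spec}\,J}$ and $\mathrm{Spec}\,J\subseteq[r,\infty)$, one may replace $f$ by any Borel function agreeing with it on $[r,\infty)$, in particular one bounded on compact subintervals of $(0,\infty)$ with bounded reciprocal on each $[\rho,\infty)$; this places the representative in $\mathcal{B}$ without altering the space. Second, by Peetre's description of interpolation functions already recalled in Section~\ref{sec2} (pseudoconcavity near infinity), such a representative is an interpolation parameter exactly when it lies in Ovchinnikov's class; the degenerate situation $H_{1}=H_{0}$ up to equivalent norms, where $J$ is bounded and every $X$ coinciding with $H_{0}$ up to an equivalent norm qualifies, is covered by $\psi\equiv1$. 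Combining these observations with Ovchinnikov's theorem yields the stated description.

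The step I expect to be the main obstacle is precisely this reconciliation: Ovchinnikov works in the general framework of Banach couples and orbit functors, so one must carefully extract the Hilbert-couple case, identify his construction with the functional calculus $\psi\mapsto\psi(J)$ used here, and verify that the admissible function class of his theorem coincides---modulo the equivalence identifying $\psi$ with $\psi'$ whenever $\mathcal{H}_{\psi}=\mathcal{H}_{\psi'}$ with equivalent norms, and modulo the choice of a $\mathcal{B}$-representative---with the interpolation parameters characterized by Peetre. Once this dictionary is in place, what remains are routine facts about the spectral calculus of a positive self-adjoint operator.
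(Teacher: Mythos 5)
Your proposal is correct and takes essentially the same route as the paper: the paper offers no proof at all, simply asserting that Proposition~\ref{prop3.1} is Ovchinnikov's Theorem~11.4.1 ``formulated'' for the present setting, so your reduction to that theorem (with sufficiency read off from the definition of an interpolation parameter and necessity obtained by translating Ovchinnikov's admissible functions into the functional calculus of the generating operator $J$) is, if anything, more explicit than the source. The reconciliation step you flag as the main obstacle is exactly the content the paper leaves implicit in the phrase ``can be formulated as follows.''
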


Note that all exact interpolation Hilbert spaces for $\mathcal{H}$ were characterized (isometrically) by Donoghue \cite{Donoghue67}.

Let us turn to the proofs of the theorems formulated in Section~\ref{sec2}.

\begin{proof}[Proof of Theorem $\ref{th2.6}$]
We first show that the pair $[H^{\varphi_{0}}_{A},H^{\varphi_{1}}_{A}]$ is regular. It follows from the hypothesis of the theorem that  $\mathrm{Dom}\,\varphi_{1}(A)\subseteq\mathrm{Dom}\,\varphi_{0}(A)$ and that $\|u\|_{\varphi_{0}}\leq\varkappa^{-1}\|u\|_{\varphi_{1}}$ for every $u\in\mathrm{Dom}\,\varphi_{1}(A)$, with
\begin{equation}\label{f3.2}
\varkappa:=\inf_{t\geq1}
\frac{\varphi_{1}(t)}{\varphi_{0}(t)}>0.
\end{equation}
Hence, the identity mapping on $\mathrm{Dom}\,\varphi_{1}(A)$ extends uniquely to a continuous linear operator
\begin{equation}\label{f3.3}
I:H^{\varphi_{1}}_{A}\to H^{\varphi_{0}}_{A}.
\end{equation}
Let us prove that this operator is injective.

Suppose that $Iu=0$ for certain $u\in H^{\varphi_{1}}_{A}$. We must prove the equality $u=0$. Choose a sequence $(u_{k})_{k=1}^{\infty}\subset\mathrm{Dom}\,\varphi_{1}(A)$ such that $u_{k}\to u$ in $H^{\varphi_{1}}_{A}$ as $k\to\infty$. Since operator \eqref{f3.3} is bounded, we have the convergence $u_{k}=Iu_{k}\to Iu=0$ in $H^{\varphi_{0}}_{A}$. Hence, the sequence $(u_{k})_{k=1}^{\infty}$ is a Cauchy one in $\mathrm{Dom}\,\varphi_{1}(A)$, and $u_{k}\to0$ in $\mathrm{Dom}\,\varphi_{0}(A)$. Here and below in the proof, the linear space $\mathrm{Dom}\,\varphi_{j}(A)$ is endowed with the norm $\|\cdot\|_{\varphi_{j}}$ for each $j\in\{0,1\}$. Thus, there exists a vector $v\in H$ such that $\varphi_{1}(A)u_{k}\to v$ in $H$, and $\varphi_{0}(A)u_{k}\to0$ in $H$. Besides,
\begin{equation*}
\varphi_{0}(A)u_{k}=
\frac{\varphi_{0}}{\varphi_{1}}(A)\varphi_{1}(A)u_{k}\to
\frac{\varphi_{0}}{\varphi_{1}}(A)v\quad\mbox{in}\quad H
\end{equation*}
because the function $\varphi_{0}/\varphi_{1}$ is bounded on $[1,\infty)$. Therefore, $(\varphi_{0}/\varphi_{1})(A)v=0$. Hence, $v=0$ as a vector from $H$ because the function $\varphi_{0}/\varphi_{1}$ is positive on $[1,\infty)$. Thus, $\varphi_{1}(A)u_{k}\to0$ in~$H$. Therefore, $\|u_{k}\|_{\varphi_{1}}=\|\varphi_{1}(A)u_{k}\|\to0$, i.e. $u=\lim_{k\to\infty}u_{k}=0$ in $H^{\varphi_{1}}_{A}$. We have proved that the operator \eqref{f3.3} is injective.

Hence, it realizes a continuous embedding $H^{\varphi_{1}}_{A}\hookrightarrow H^{\varphi_{0}}_{A}$. The density of this embedding follows directly from the density of $\mathrm{Dom}\,\varphi_{1}(A)$ in the normed space $\mathrm{Dom}\,\varphi_{0}(A)$. Let us prove the latter density. Choose a vector $u\in\mathrm{Dom}\,\varphi_{0}(A)$ arbitrarily. The domain of the operator $\varphi_{1}(A)(1/\varphi_{0})(A)$ is dense in $H$ because the closure of this operator coincides with the operator $(\varphi_{1}/\varphi_{0})(A)$, whose domain is dense in~$H$. Hence, there exists a sequence
\begin{equation*}
(v_{k})_{k=1}^{\infty}\subset
\mathrm{Dom}\bigl(\varphi_{1}(A)(1/\varphi_{0})(A)\bigr)
\end{equation*}
such that $v_{k}\to\varphi_{0}(A)u$ in $H$ as $k\to\infty$. Putting
\begin{equation*}
u_{k}:=(1/\varphi_{0})(A)v_{k}\in\mathrm{Dom}\,\varphi_{1}(A)
\end{equation*}
for every integer $k\geq1$, we conclude that $\varphi_{0}(A)u_{k}=v_{k}\to\varphi_{0}(A)u$ in $H$. Therefore, $\mathrm{Dom}\,\varphi_{1}(A)\ni u_{k}\to u$ in $\mathrm{Dom}\,\varphi_{0}(A)$. Hence, the set $\mathrm{Dom}\,\varphi_{1}(A)$ is dense in  $\mathrm{Dom}\,\varphi_{0}(A)$. Thus, the continuous embedding $H^{\varphi_{1}}_{A}\hookrightarrow H^{\varphi_{0}}_{A}$ is dense, i.e. the pair $[H^{\varphi_{0}}_{A},H^{\varphi_{1}}_{A}\bigr]$ is regular.

Let us build the generating operator for this pair. Choosing  $j\in\{0,1\}$ arbitrarily, we have the isometric linear operator
\begin{equation*}
\varphi_{j}(A):\mathrm{Dom}\,\varphi_{j}(A)\to H.
\end{equation*}
This operator extends uniquely (by continuity) to an isometric isomorphism
\begin{equation}\label{f3.4}
B_{j}:H_{A}^{\varphi_{j}}\leftrightarrow H,
\quad\mbox{with}\quad j\in\{0,1\}
\end{equation}
(see the explanation for \eqref{f2.2}). Define the linear operator $J$ in $H_{A}^{\varphi_{0}}$ by the formula $Ju:=B_{0}^{-1}B_{1}u$ for every $u\in\mathrm{Dom}\,J:=H_{A}^{\varphi_{1}}$. Let us prove that $J$ is the generating operator for the pair $[H^{\varphi_{0}}_{A},H^{\varphi_{1}}_{A}]$.

Note first that $J$ sets an isometric isomorphism
\begin{equation}\label{f3.5}
J=B_{0}^{-1}B_{1}:H_{A}^{\varphi_{1}}\leftrightarrow H_{A}^{\varphi_{0}}.
\end{equation}
Hence, the operator $J$ is closed in $H_{A}^{\varphi_{0}}$. Besides, $J$ is a positive definite operator in $H_{A}^{\varphi_{0}}$. Indeed, choosing $u\in\mathrm{Dom}\,\varphi_{1}(A)$ arbitrarily, we write the following:
\begin{align*}
(Ju,u)_{\varphi_{0}}&=(B_{0}^{-1}B_{1}u,u)_{\varphi_{0}}=(B_{1}u,B_{0}u)=
(\varphi_{1}(A)u,\varphi_{0}(A)u)\\
&=\Bigl(\frac{\varphi_{1}}{\varphi_{0}}(A)\varphi_{0}(A)u,
\varphi_{0}(A)u\Bigl)\geq\varkappa(\varphi_{0}(A)u,\varphi_{0}(A)u)=
\varkappa(u,u)_{\varphi_{0}},
\end{align*}
the inequality being due to~\eqref{f3.2}. Passing here to the limit and using \eqref{f3.5}, we conclude that
\begin{equation}\label{f3.5bis}
(Ju,u)_{\varphi_{0}}\geq\varkappa(u,u)_{\varphi_{0}}
\quad\mbox{for every}\quad u\in H_{A}^{\varphi_{1}}.
\end{equation}
Thus, $J$ is a positive definite closed operator in $H_{A}^{\varphi_{0}}$. Moreover, since $0\notin\mathrm{Spec}\,J$ by \eqref{f3.5}, this operator is self-adjoint on $H_{A}^{\varphi_{0}}$. Regarding \eqref{f3.5} again, we conclude that $J$ is the generating operator for the pair $[H^{\varphi_{0}}_{A},H^{\varphi_{1}}_{A}]$.

Let us reduce the self-adjoint operator $J$ in $H_{A}^{\varphi_{0}}$ to an operator of multiplication by function. Since the operator $A$ is self-adjoint in $H$ and since $A\geq1$, there exists a space $R$ with a finite measure $\mu$, a measurable function $\alpha:R\to[1,\infty)$, and an isometric isomorphism
\begin{equation}\label{f3.6}
\mathcal{I}:H\leftrightarrow L_{2}(R,d\mu)
\end{equation}
such that
\begin{equation*}
\mathrm{Dom}\,A=\{u\in H:\alpha\cdot\mathcal{I}u\in L_{2}(R,d\mu)\}
\end{equation*}
and that $\mathcal{I}Au=\alpha\cdot\mathcal{I}u$ for every $u\in\mathrm{Dom}\,A$; see, e.g, \cite[Theorem VIII.4]{ReedSimon72}. Otherwise speaking, $\mathcal{I}$ reduces $A$ to the operator of multiplication by~$\alpha$.

Using \eqref{f3.4} and \eqref{f3.6}, we introduce the isometric isomorphism
\begin{equation}\label{f3.9}
\mathcal{I}_{0}:=\mathcal{I}B_{0}:H^{\varphi_{0}}_{A}\leftrightarrow L_{2}(R,d\mu).
\end{equation}
Let us show that $\mathcal{I}_{0}$ reduces $J$ to an operator of multiplication by function. Given $u\in\mathrm{Dom}\,\varphi_{1}(A)$, we write the following:
\begin{align*}
\mathcal{I}_{0}Ju&=(\mathcal{I}B_{0})(B_{0}^{-1}B_{1})u=
\mathcal{I}B_{1}u=\mathcal{I}\varphi_{1}(A)u\\
&=\mathcal{I}\Bigl(\frac{\varphi_{1}}{\varphi_{0}}\Bigr)(A)\varphi_{0}(A)u
=\Bigl(\frac{\varphi_{1}}{\varphi_{0}}\circ\alpha\Bigr)
\mathcal{I}\varphi_{0}(A)u=
\Bigl(\frac{\varphi_{1}}{\varphi_{0}}\circ\alpha\Bigr)\mathcal{I}_{0}u.
\end{align*}
Thus,
\begin{equation}\label{f3.10}
\mathcal{I}_{0}Ju=
\Bigl(\frac{\varphi_{1}}{\varphi_{0}}\circ\alpha\Bigr)\mathcal{I}_{0}u
\quad\mbox{for every}\quad u\in\mathrm{Dom}\,\varphi_{1}(A).
\end{equation}
Let us prove that this equality holds true for every $u\in H^{\varphi_{1}}_{A}$.

Choose $u\in H^{\varphi_{1}}_{A}$ arbitrarily, and consider a sequence $(u_{k})_{k=1}^{\infty}\subset\mathrm{Dom}\,\varphi_{1}(A)$ such that $u_{k}\to u$ in $H^{\varphi_{1}}_{A}$ as $k\to\infty$. Owing to \eqref{f3.10}, we have the equality
\begin{equation}\label{f3.11}
\mathcal{I}_{0}Ju_{k}=
\Bigl(\frac{\varphi_{1}}{\varphi_{0}}\circ\alpha\Bigr)\mathcal{I}_{0}u_{k}
\quad\mbox{whenever}\quad 1\leq k\in\mathbb{Z}.
\end{equation}
Here,
\begin{equation}\label{f3.12}
\mathcal{I}_{0}Ju_{k}\to\mathcal{I}_{0}Ju\quad\mbox{and}\quad
\mathcal{I}_{0}u_{k}\to\mathcal{I}_{0}u
\quad\mbox{in}\quad L_{2}(R,d\mu)\quad\mbox{as}\quad k\to\infty
\end{equation}
due to the isometric isomorphisms \eqref{f3.5} and \eqref{f3.9}. Since convergence in $L_{2}(R,d\mu)$ implies convergence in the measure $\mu$, it follows from \eqref{f3.12} by the Riesz theorem that
\begin{equation}\label{f3.13}
\mathcal{I}_{0}Ju_{k_l}\to\mathcal{I}_{0}Ju
\quad\mbox{and}\quad \mathcal{I}_{0}u_{k_l}\to\mathcal{I}_{0}u
\quad\mu\mbox{-a.e. on}\;R\quad\mbox{as}\quad l\to\infty
\end{equation}
for a certain subsequence $(u_{k_l})_{l=1}^{\infty}$ of $(u_{k})_{k=1}^{\infty}$. The latter convergence implies that
\begin{equation}\label{f3.14}
\Bigl(\frac{\varphi_{1}}{\varphi_{0}}\circ\alpha\Bigr)
\mathcal{I}_{0}u_{k_l}\to
\Bigl(\frac{\varphi_{1}}{\varphi_{0}}\circ\alpha\Bigr)\mathcal{I}_{0}u
\quad\mu\mbox{-a.e. on}\;R\quad\mbox{as}\quad l\to\infty.
\end{equation}
Now formulas \eqref{f3.11}, \eqref{f3.13}, and \eqref{f3.14} yield the required equality
\begin{equation}\label{f3.15}
\mathcal{I}_{0}Ju=
\Bigl(\frac{\varphi_{1}}{\varphi_{0}}\circ\alpha\Bigr)\mathcal{I}_{0}u
\quad\mbox{for every}\quad u\in H^{\varphi_{1}}_{A}.
\end{equation}

It follows from \eqref{f3.9} and \eqref{f3.15} that
\begin{equation}\label{f3.16}
H^{\varphi_{1}}_{A}\subseteq\Bigl\{u\in H^{\varphi_{0}}_{A}:
\Bigl(\frac{\varphi_{1}}{\varphi_{0}}\circ\alpha\Bigr)\mathcal{I}_{0}u\in
L_{2}(R,d\mu)\Bigr\}=:Q,
\end{equation}
we recalling $H^{\varphi_{1}}_{A}=\mathrm{Dom}\,J$. Let us prove that $H^{\varphi_{1}}_{A}=Q$ in fact. We endow the linear space $Q$ with the norm
\begin{equation*}
\|u\|_{Q}:=\Bigl\|\Bigl(\frac{\varphi_{1}}{\varphi_{0}}\circ\alpha\Bigr)
\mathcal{I}_{0}u\Bigr\|_{L_{2}(R,d\mu)}.
\end{equation*}
Owing to \eqref{f3.15} and \eqref{f3.16}, we have the normal embedding
\begin{equation}\label{f3.17}
H^{\varphi_{1}}_{A}\subseteq Q\quad\mbox{with}\quad
\|u\|_{Q}=\|u\|_{\varphi_{1}}\;\;\mbox{for every}\;\;
u\in H^{\varphi_{1}}_{A}.
\end{equation}

Consider the linear mapping
\begin{equation}\label{f3.18}
L:u\mapsto B_{1}^{-1}\mathcal{I}^{-1}\Bigl[\Bigl
(\frac{\varphi_{1}}{\varphi_{0}}\circ\alpha\Bigr)\cdot
\mathcal{I}_{0}u\Bigr]\quad\mbox{where}\quad u\in Q.
\end{equation}
According to the isometric isomorphisms \eqref{f3.4} and \eqref{f3.6}, we have
\begin{equation}\label{f3.19}
\mbox{the isometric operator}\quad L:Q\to H^{\varphi_{1}}_{A}.
\end{equation}
If $Lu=u$ for every $u\in H^{\varphi_{1}}_{A}$, the required equality $H^{\varphi_{1}}_{A}=Q$ will follow plainly from \eqref{f3.16} and the injectivity of \eqref{f3.19}. Let us prove that $Lu=u$ for every $u\in H^{\varphi_{1}}_{A}$.

Given $u\in\mathrm{Dom}\,\varphi_{1}(A)$, we write the following:
\begin{align*}
Lu&=B_{1}^{-1}\mathcal{I}^{-1}\Bigl[\Bigl
(\frac{\varphi_{1}}{\varphi_{0}}\circ\alpha\Bigr)
\mathcal{I}\varphi_{0}(A)u\Bigr]=
B_{1}^{-1}\mathcal{I}^{-1}\Bigl[\Bigl
(\frac{\varphi_{1}}{\varphi_{0}}\circ\alpha\Bigr)
(\varphi_{0}\circ\alpha)\mathcal{I}u\Bigr]\\
&=B_{1}^{-1}\mathcal{I}^{-1}
\bigl[(\varphi_{1}\circ\alpha)\mathcal{I}u\bigr]=
B_{1}^{-1}\varphi_{1}(A)u=B_{1}^{-1}B_{1}u=u.
\end{align*}
Thus,
\begin{equation}\label{f3.20}
Lu=u\quad\mbox{for every}\quad u\in\mathrm{Dom}\,\varphi_{1}(A).
\end{equation}
Choose now $u\in H^{\varphi_{1}}_{A}$ arbitrarily, and let a sequence $(u_{k})_{k=1}^{\infty}\subset\mathrm{Dom}\,\varphi_{1}(A)$ converge to  $u$ in $H^{\varphi_{1}}_{A}$. Since $u_{k}\to u$ in $Q$ by \eqref{f3.17}, we write
\begin{equation*}
Lu=\lim_{k\to\infty}Lu_{k}=\lim_{k\to\infty}u_{k}=u
\quad\mbox{in}\quad H^{\varphi_{1}}_{A}
\end{equation*}
in view of \eqref{f3.19} and \eqref{f3.20}. Thus, $Lu=u$ for every $u\in H^{\varphi_{1}}_{A}$, and we have proved the required equality
\begin{equation}\label{f3.21}
\mathrm{Dom}\,J=\Bigl\{u\in H^{\varphi_{0}}_{A}:
\Bigl(\frac{\varphi_{1}}{\varphi_{0}}\circ\alpha\Bigr)\mathcal{I}_{0}u\in
L_{2}(R,d\mu)\Bigr\}.
\end{equation}

Formulas \eqref{f3.15} and \eqref{f3.21} mean that the operator $J$ is reduced by the isometric isomorphism \eqref{f3.9} to the operator of multiplication by the function $(\varphi_{1}/\varphi_{0})\circ\alpha$.
Using this fact, we will prove the required formula \eqref{f2.9}.

Since $\psi\in\mathcal{B}$, the function $1/\psi$ is bounded on $[\varkappa,\infty)$. Hence, the function
\begin{equation*}
\frac{\varphi_{0}(t)}{\varphi(t)}=
\frac{1}{\psi}\Bigl(\frac{\varphi_{1}(t)}{\varphi_{0}(t)}\Bigr)
\quad\mbox{of}\quad t\geq1
\end{equation*}
is bounded due to \eqref{f3.2}. Therefore, $\mathrm{Dom}\,\varphi(A)\subseteq\mathrm{Dom}\,\varphi_{0}(A)$.

Choosing $u\in\mathrm{Dom}\,\varphi(A)$ arbitrarily and using the above-mentioned reductions of $A$ and $J$ to operators of multiplication by function, we write the following:
\begin{align*}
L_2(R,d\mu)\ni\mathcal{I}\varphi(A)u&=(\varphi\circ\alpha)\mathcal{I}u=
\Bigl(\psi\circ\frac{\varphi_1}{\varphi_0}\circ\alpha\Bigr)
(\varphi_{0}\circ\alpha)\mathcal{I}u\\
&=\Bigl(\psi\circ\frac{\varphi_1}{\varphi_0}\circ\alpha\Bigr)
\mathcal{I}\varphi_{0}(A)u=
\Bigl(\psi\circ\frac{\varphi_1}{\varphi_0}\circ\alpha\Bigr)
\mathcal{I}_{0}u=\mathcal{I}_{0}\psi(J)u.
\end{align*}
Hence,
\begin{equation*}
\|u\|_{\varphi}=\|\varphi(A)u\|=\|\mathcal{I}\varphi(A)u\|_{L_2(R,d\mu)}=
\|\mathcal{I}_{0}\psi(J)u\|_{L_2(R,d\mu)}=\|\psi(J)u\|_{\varphi_0}.
\end{equation*}
Therefore, $\mathrm{Dom}\,\varphi(A)\subseteq\mathrm{Dom}\,\psi(J)$,
and $\|u\|_{\varphi}=\|u\|_{X}$ for every $u\in\mathrm{Dom}\,\varphi(A)$, where  $X:=[H^{\varphi_{0}}_{A},H^{\varphi_{1}}_{A}]_{\psi}=
\mathrm{Dom}\,\psi(J)$. Passing here to the limit, we infer the normal embedding
\begin{equation}\label{f3.22}
H^{\varphi}_{A}\subseteq X\quad\mbox{with}\quad
\|u\|_{\varphi}=\|u\|_{X}\;\;\mbox{for every}\;\;
u\in H^{\varphi}_{A}.
\end{equation}
Besides, as we have just shown,
\begin{equation}\label{f3.23}
\mathcal{I}\varphi(A)u=\mathcal{I}_{0}\psi(J)u
\quad\mbox{whenever}\quad u\in\mathrm{Dom}\,\varphi(A).
\end{equation}

Let us deduce the equality $H^{\varphi}_{A}=X$ from \eqref{f3.22} and \eqref{f3.23}. Using the isometric isomorphisms \eqref{f2.2}, \eqref{f3.6}, and \eqref{f3.9}, we get
\begin{equation}\label{f3.24}
\mbox{the isometric operator}\quad M:=B^{-1}\mathcal{I}^{-1}\mathcal{I}_{0}\psi(J):X\to H^{\varphi}_{A}.
\end{equation}
Owing to \eqref{f3.23}, we write $Mu=B^{-1}\mathcal{I}^{-1}\mathcal{I}\varphi(A)u=u$ for every $u\in\mathrm{Dom}\,\varphi(A)$. Therefore, choosing $u\in H^{\varphi}_{A}$ arbitrarily and considering a sequence $(u_{k})_{k=1}^{\infty}\subset\mathrm{Dom}\,\varphi(A)$ such that $u_{k}\to u$ in $H^{\varphi}_{A}$, we get
\begin{equation*}
Mu=\lim_{k\to\infty}Mu_{k}=\lim_{k\to\infty}u_{k}=u
\quad\mbox{in}\quad H^{\varphi}_{A}
\end{equation*}
due to \eqref{f3.22}. Now the required equality $H^{\varphi}_{A}=X$ follows from the property $Mu=u$ whenever $u\in H^{\varphi}_{A}$, the inclusion $H^{\varphi}_{A}\subseteq X$, and the injectivity of the operator \eqref{f3.24}. In view of \eqref{f3.22}, we have proved \eqref{f2.9} and, hence, Theorem~\ref{th2.6}.
\end{proof}

We will deduce Theorem~\ref{th2.5} from Theorem~\ref{th2.6} with the help of the following result \cite[Theorem~4.2]{MikhailetsMurach15ResMath1}:

\begin{proposition}\label{prop3.2}
Let $s_{0},s_{1}\in\mathbb{R}$, $s_{0}<s_{1}$, and $\psi\in\mathcal{B}$. Put $\varphi(t):=t^{s_{0}}\psi(t^{s_{1}-s_{0}})$ for every $t\geq1$. Then the function $\psi$ is an interpolation parameter if and only if the function $\varphi$ satisfies \eqref{f2.3} with some positive numbers $c_{0}$ and $c_{1}$ that are independent of $t$ and $\lambda$.
\end{proposition}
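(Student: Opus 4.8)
The plan is to turn condition~\eqref{f2.3} on $\varphi$ into an equivalent two-sided growth condition on $\psi$ by a change of variables, and then to recognize the latter as a reformulation of the pseudoconcavity criterion for interpolation parameters recalled in Section~\ref{sec2}. First I would set $\delta:=s_{1}-s_{0}>0$, so that $\varphi(t)=t^{s_{0}}\psi(t^{\delta})$ and hence $\varphi(\lambda t)/\varphi(t)=\lambda^{s_{0}}\psi(\lambda^{\delta}t^{\delta})/\psi(t^{\delta})$ for $t,\lambda\geq1$. Dividing the two inequalities in~\eqref{f2.3} by $\lambda^{s_{0}}$ and putting $\mu:=\lambda^{\delta}$, $\sigma:=t^{\delta}$ --- a bijection of $[1,\infty)^{2}$ onto itself, since $\delta>0$ --- shows that $\varphi$ satisfies~\eqref{f2.3} with constants independent of $t$ and $\lambda$ if and only if there exist $c_{0},c_{1}>0$ such that
\begin{equation}\label{f3.2star}\tag{$\star$}
c_{0}\leq\frac{\psi(\mu\sigma)}{\psi(\sigma)}\leq c_{1}\mu\qquad\text{for all}\quad\mu\geq1,\ \sigma\geq1 .
\end{equation}
In other words, \eqref{f3.2star} says that $\psi$ is quasi-nondecreasing on $[1,\infty)$ while $\sigma\mapsto\psi(\sigma)/\sigma$ is quasi-nonincreasing. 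Since $\psi\in\mathcal{B}$, the function $\psi$ is bounded on every $[1,M]$ and $1/\psi$ is bounded on $[1,\infty)$; absorbing these bounds into the constants, one checks that \eqref{f3.2star} holds as stated if and only if it holds for all sufficiently large $\sigma$.

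By Peetre's description of interpolation parameters recalled after the definition of $\mathcal{H}_{\psi}$, it now suffices to prove that $\psi$ is pseudoconcave in a neighbourhood of infinity if and only if \eqref{f3.2star} holds. The direction ``pseudoconcave $\Rightarrow$ \eqref{f3.2star}'' is the easy one: take a concave positive $\psi_{1}$ on some $(r,\infty)$ with $\psi/\psi_{1}$ and $\psi_{1}/\psi$ bounded there; a positive concave function on an unbounded interval is automatically nondecreasing (otherwise, by concavity, it would eventually become negative), and the chord inequality for $\psi_{1}$ at the intermediate point $\sigma$, with the left endpoint tending to $r$ and the right endpoint $\mu\sigma$, gives $\psi_{1}(\mu\sigma)\leq\frac{\mu\sigma-r}{\sigma-r}\psi_{1}(\sigma)\leq2\mu\,\psi_{1}(\sigma)$ for $\sigma\geq2r$ and $\mu\geq1$; transferring both estimates to $\psi$ and enlarging the constants to cover the bounded range $[1,2r]$ yields~\eqref{f3.2star}.

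The converse is the main obstacle. Assuming \eqref{f3.2star}, I would pass to the least quasi-concave majorant
\begin{equation*}
\psi^{\ast}(\sigma):=\sup_{s\geq1}\min\Bigl\{1,\frac{\sigma}{s}\Bigr\}\psi(s),\qquad\sigma\geq1,
\end{equation*}
and verify the routine points: $\psi^{\ast}\geq\psi$; the function $\psi^{\ast}$ is nondecreasing and $\psi^{\ast}(\sigma)/\sigma$ is nonincreasing, because $\sigma\mapsto\min\{1,\sigma/s\}$ is nondecreasing while $\sigma\mapsto\min\{1/\sigma,1/s\}$ is nonincreasing; and, splitting the supremum at $s=\sigma$ and using the two halves of~\eqref{f3.2star}, $\psi^{\ast}(\sigma)\leq\max\{c_{0}^{-1},c_{1}\}\psi(\sigma)$. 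Hence $\psi^{\ast}$ is quasi-concave on $[1,\infty)$ and equivalent to $\psi$ there, and extending it to $(0,\infty)$ by $\psi^{\ast}(\sigma):=\psi^{\ast}(1)\sigma$ for $0<\sigma<1$ keeps it quasi-concave and continuous. The classical fact that the least concave majorant $\widehat{\psi^{\ast}}$ of the quasi-concave function $\psi^{\ast}$ satisfies $\psi^{\ast}\leq\widehat{\psi^{\ast}}\leq2\psi^{\ast}$ (see, e.g., \cite{BerghLefstrem76} and \cite[Section~1.1]{MikhailetsMurach14}) then produces a concave function equivalent to $\psi$ near infinity, so $\psi$ is pseudoconcave there, and the proof is complete. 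The delicate part of the whole argument is just the bookkeeping of constants --- keeping $c_{0},c_{1}$ uniform in both variables under the change of variables and under the passage between ``near infinity'' and ``on $[1,\infty)$''; once \eqref{f3.2star} is secured, the quasi-concave-to-concave step is entirely standard.
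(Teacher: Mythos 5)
The paper itself does not prove Proposition~\ref{prop3.2}; it imports the statement from \cite[Theorem~4.2]{MikhailetsMurach15ResMath1}, so there is no internal proof to compare against. Your argument is correct and is the natural one given the background already recalled in Section~\ref{sec2}: the substitution $\mu=\lambda^{s_1-s_0}$, $\sigma=t^{s_1-s_0}$ converts \eqref{f2.3} exactly into the two-sided condition $c_0\leq\psi(\mu\sigma)/\psi(\sigma)\leq c_1\mu$ on $[1,\infty)^{2}$, and it then suffices to identify that condition with pseudoconcavity of $\psi$ in a neighbourhood of infinity, which by the Peetre criterion characterizes interpolation parameters in $\mathcal{B}$. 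The individual steps all check out: a positive concave function on an unbounded interval is indeed nondecreasing; the chord estimate $\psi_{1}(\mu\sigma)\leq\frac{\mu\sigma-r}{\sigma-r}\psi_{1}(\sigma)\leq 2\mu\,\psi_{1}(\sigma)$ for $\sigma\geq 2r$ is valid; the transfer of the estimates between ``for large $\sigma$'' and ``for all $\sigma\geq1$'' is legitimate because $\psi$ is bounded on compact subintervals and $1/\psi$ is bounded on $[1,\infty)$; and the majorant $\psi^{\ast}(\sigma)=\sup_{s\geq1}\min\{1,\sigma/s\}\psi(s)$ is finite, quasi-concave, and equivalent to $\psi$ with constant $\max\{c_{0}^{-1},c_{1}\}$, after which the standard factor-$2$ comparison with the least concave majorant finishes the job. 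The only cosmetic caveat is that the ``only if'' direction of your equivalence also relies on the nontrivial Peetre description, but since the paper states that criterion as known background before formulating Theorem~\ref{th2.5}, this reliance is entirely appropriate.
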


\begin{proof}[Proof of Theorem $\ref{th2.5}$]
Let us show first that $\psi\in\mathcal{B}$. Evidently, the function $\psi$ is Borel measurable. Putting $t:=1$ in \eqref{f2.3}, we write $c_{0}\varphi(1)\lambda^{s_{0}}\leq\varphi(\lambda)\leq c_{1}\varphi(1)\lambda^{s_{1}}$ for arbitrary $\lambda\geq1$. Hence, the function $\varphi$ is bounded on every compact subset of $[1,\infty)$, which yields the boundedness of $\psi$ on every interval $(0,b]$ with $b>1$. Besides,
\begin{equation*}
\psi(\tau):=\tau^{-s_{0}/(s_{1}-s_{0})}\,\varphi(\tau^{1/(s_{1}-s_{0})})
\geq c_{0}\varphi(1)\quad\mbox{whenever}\quad\tau\geq1.
\end{equation*}
Therefore, the function $1/\psi$ is bounded on $(0,\infty)$. Thus, $\psi\in\mathcal{B}$ by the definition of~$\mathcal{B}$.

It follows from the definition of $\psi$ that $\varphi(t)=t^{s_{0}}\psi(t^{s_{1}-s_{0}})$ for every $t\geq1$. Hence, $\psi$ is an interpolation parameter according to Proposition~\ref{prop3.2}, whereas the interpolation property \eqref{f2.7} is due to Theorem~\ref{th2.6}, in which we put $\varphi_{0}(t)\equiv t^{s_0}$ and $\varphi_{1}(t)\equiv t^{s_1}$.
\end{proof}

\begin{proof}[Proof of Theorem $\ref{th2.3}$.]
\emph{Necessity.} Suppose that a Hilbert space $X$ is an interpolation space for the pair $[H^{s_{0}}_{A},H^{s_{1}}_{A}]$. Then, owing to Proposition~\ref{prop3.1}, there exists an interpolation parameter $\psi\in\mathcal{B}$ such that $X=[H^{s_{0}}_{A},H^{s_{1}}_{A}]_{\psi}$ up to equivalence of norms. According to Theorem~\ref{th2.6}, we get $[H^{s_{0}}_{A},H^{s_{1}}_{A}]_{\psi}=H^{\varphi}_{A}$ with equality of norms; here, $\varphi(t):=t^{s_{0}}\psi(t^{s_{1}-s_{0}})$ for every $t\geq1$. Thus, $X=H^{\varphi}_{A}$ up to equivalence of norms. Note that $\varphi\in\mathrm{OR}$ due to Proposition~\ref{prop2.7} considered in the case of $\varphi_{0}(t)\equiv t^{s_0}$ and $\varphi_{1}(t)\equiv t^{s_1}$. Moreover, according to Proposition~\ref{prop3.2}, the function $\varphi$ satisfies condition~\eqref{f2.3}. The necessity is proved.

\emph{Sufficiency.} Suppose now that a Hilbert space $X$ coincides with $H^{\varphi}_{A}$ up to equivalence of norms for a certain function parameter $\varphi\in\mathrm{OR}$ that satisfies condition~\eqref{f2.3}. Then, owing to Theorem~\ref{th2.5}, we get $H^{\varphi}_{A}=[H^{s_{0}}_{A},H^{s_{1}}_{A}]_{\psi}$ with equality of norms, where the interpolation parameter $\psi\in\mathcal{B}$ is defined by formula~\eqref{f2.6}. Thus, $X=[H^{s_{0}}_{A},H^{s_{1}}_{A}]_{\psi}$ up to equivalence of norms. This implies that $X$ is an interpolation space for the pair $[H^{s_{0}}_{A},H^{s_{1}}_{A}]$ in view of the definition of an interpolation parameter (or by Proposition~\ref{prop3.1}). The sufficiency is also proved.
\end{proof}

\begin{proof}[Proof of Theorem $\ref{th2.1}$.]
\emph{Necessity.} Suppose that a Hilbert space $X$ belongs to the extended $A$-scale. Then $X$ is an interpolation space for a certain pair $[H^{s_{0}}_{A},H^{s_{1}}_{A}]$ where $s_{0}<s_{1}$. Hence, we conclude by Theorem~\ref{th2.3} that $X=H^{\varphi}(\Omega)$ up to equivalence of norms for a certain function parameter $\varphi\in\mathrm{OR}$. The necessity is proved.

\emph{Sufficiency.} Suppose now that $X=H^{\varphi}_{A}$ up to equivalence of norms for certain $\varphi\in\mathrm{OR}$. The function $\varphi$ satisfies condition~\eqref{f2.3} for the numbers $s_{0}:=\sigma_{0}(\varphi)-1$ and $s_{1}:=\sigma_{1}(\varphi)+1$, for example. Therefore, $X$ is an interpolation space for the pair $[H^{s_{0}}_{A},H^{s_{1}}_{A}]$ due to Theorem~\ref{th2.3}; i.e., $X$ belongs to the extended $A$-scale. The sufficiency is also proved.
\end{proof}

\section{Interpolational inequalities}\label{sec3b}

We assume in this section that functions $\varphi_{0},\varphi_{1}:[1,\infty)\to(0,\infty)$ and $\psi:(0,\infty)\to(0,\infty)$ satisfy the hypothesis of Theorem~\ref{th2.6}; i.e., $\varphi_{0}$ and $\varphi_{1}$ are Borel measurable, and $\varphi_{0}/\varphi_{1}$ is bounded on $[1,\infty)$, and $\psi$ belongs to $\mathcal{B}$. Moreover, suppose that $\psi$ is pseudoconcave in a neighbourhood of infinity; then $\psi$ is an interpolation parameter (see, e.g., \cite[Theorem~1.9]{MikhailetsMurach14}). Owing to Theorem~\ref{th2.6}, we have the dense continuous embedding $H^{\varphi_{1}}_{A}\hookrightarrow H^{\varphi_{0}}_{A}$ and the interpolation formula $[H^{\varphi_{0}}_{A},H^{\varphi_{1}}_{A}]_{\psi}=H^{\varphi}_{A}$ with equality of norms. Here, the Borel measurable function $\varphi:[1,\infty)\to(0,\infty)$ is defined by \eqref{f2.8}. Hence, $H^{\varphi}_{A}$ is an interpolation space between $H^{\varphi_{0}}_{A}$ and~$H^{\varphi_{1}}_{A}$.

We will obtain some inequalities that estimate (from above) the norm in the interpolation space $H^{\varphi}_{A}$ via the norms in the marginal spaces $H^{\varphi_{0}}_{A}$ and $H^{\varphi_{1}}_{A}$ with the help of the interpolation parameter $\psi$. Such inequalities are naturally called interpolational. Specifically, if $\varphi_{0},\varphi_{1}\in\mathrm{OR}$, then $\varphi\in\mathrm{OR}$ as well, due to Proposition~\ref{prop2.7}. In this case, these interpolational inequalities deal with norms in spaces belonging to the extended Hilbert scale.

We denote the number $\varkappa>0$ by formula \eqref{f3.2}. Owing to \cite[Lemma 1.1]{MikhailetsMurach14}, the function $\psi$ is pseudoconcave on $(\varepsilon,0)$ whenever $\varepsilon>0$. Hence, according to \cite[Lemma 1.2]{MikhailetsMurach14}, there exists a number $c_{\psi,\varkappa}>0$ such that
\begin{equation}\label{f3b.2}
\frac{\psi(t)}{\psi(\tau)}\leq c_{\psi,\varkappa}\max\biggl\{1,\frac{t}{\tau}\biggr\}\quad\mbox{for all}\;\; t,\tau\in[\varkappa,\infty).
\end{equation}

\begin{theorem}\label{th3b.1}
Let $\tau\geq\varkappa$ and $u\in H^{\varphi_{1}}_{A}$; then
\begin{equation}\label{f3b.3}
\|u\|_{\varphi}\leq c_{\psi,\varkappa}\,\psi(\tau)
\bigl(\|u\|_{\varphi_0}^{2}+\tau^{-2}\|u\|_{\varphi_1}^{2}\bigr)^{1/2}.
\end{equation}
\end{theorem}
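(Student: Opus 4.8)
The plan is to exploit the spectral reduction already established in the proof of Theorem~\ref{th2.6}: the operator $J$ generating the pair $[H^{\varphi_0}_A,H^{\varphi_1}_A]$ is unitarily equivalent, via $\mathcal{I}_0$ of \eqref{f3.9}, to multiplication by the function $g:=(\varphi_1/\varphi_0)\circ\alpha$ on $L_2(R,d\mu)$, and $g\geq\varkappa$ $\mu$-a.e.\ by \eqref{f3.2}. Under this equivalence the norm in $H^{\varphi_0}_A=[H^{\varphi_0}_A,H^{\varphi_1}_A]_{\psi_0}$ with $\psi_0\equiv 1$ becomes $\|\mathcal{I}_0 u\|_{L_2}$, the norm in $H^{\varphi_1}_A$ becomes $\|g\cdot\mathcal{I}_0 u\|_{L_2}$ (since $\psi_1(\tau)=\tau$ gives $\varphi_1$ back in \eqref{f2.8}), and the norm in $H^{\varphi}_A=[H^{\varphi_0}_A,H^{\varphi_1}_A]_\psi$ becomes $\|(\psi\circ g)\cdot\mathcal{I}_0 u\|_{L_2}$. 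So the inequality \eqref{f3b.3} is equivalent to the pointwise-on-$R$ estimate
\begin{equation*}
\psi(g(x))^2\,|f(x)|^2\leq c_{\psi,\varkappa}^2\,\psi(\tau)^2\bigl(|f(x)|^2+\tau^{-2}g(x)^2|f(x)|^2\bigr),\quad f:=\mathcal{I}_0 u,
\end{equation*}
integrated over $R$; it suffices to prove the scalar inequality $\psi(t)^2\leq c_{\psi,\varkappa}^2\psi(\tau)^2(1+\tau^{-2}t^2)$ for all $t\geq\varkappa$ and the fixed $\tau\geq\varkappa$, then set $t=g(x)$ and integrate.

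The scalar inequality is exactly what \eqref{f3b.2} delivers: from $\psi(t)/\psi(\tau)\leq c_{\psi,\varkappa}\max\{1,t/\tau\}$ we get $\psi(t)^2\leq c_{\psi,\varkappa}^2\psi(\tau)^2\max\{1,t^2/\tau^2\}\leq c_{\psi,\varkappa}^2\psi(\tau)^2(1+\tau^{-2}t^2)$, using $\max\{1,s\}\leq 1+s$ for $s\geq0$. First I would record this scalar bound, then substitute $t=g(x)$ (legitimate since $g(x)\geq\varkappa$ $\mu$-a.e.), multiply by $|f(x)|^2$, and integrate over $R$ against $d\mu$, obtaining
\begin{equation*}
\|(\psi\circ g)f\|_{L_2}^2\leq c_{\psi,\varkappa}^2\,\psi(\tau)^2\bigl(\|f\|_{L_2}^2+\tau^{-2}\|g f\|_{L_2}^2\bigr).
\end{equation*}
Finally I would translate back through the isometric isomorphisms: the left side is $\|u\|_\varphi^2$ by \eqref{f3.23}/\eqref{f3.24}-type reasoning (more directly, $\psi(J)$ corresponds to multiplication by $\psi\circ g$ and $\|u\|_\varphi=\|\psi(J)u\|_{\varphi_0}$ by Theorem~\ref{th2.6}), $\|f\|_{L_2}=\|u\|_{\varphi_0}$, and $\|gf\|_{L_2}=\|u\|_{\varphi_1}$ because $g\cdot\mathcal{I}_0 u=\mathcal{I}_0 Ju$ by \eqref{f3.15} and $\|Ju\|_{\varphi_0}=\|u\|_{\varphi_1}$ for $u\in H^{\varphi_1}_A=\mathrm{Dom}\,J$. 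Taking square roots gives \eqref{f3b.3}.

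I expect no genuine obstacle here; the only points requiring a little care are (i) verifying that the validity of \eqref{f3b.2} on every $[\varepsilon,\infty)$ together with $g\geq\varkappa$ a.e.\ really does license the substitution $t=g(x)$ with the single constant $c_{\psi,\varkappa}$ — this is fine because \eqref{f3b.2} is stated precisely for $t,\tau\in[\varkappa,\infty)$ — and (ii) making sure the three $L_2$-identities are invoked with the correct functions, i.e.\ that multiplication by $g$ on the $\mathcal{I}_0$-side corresponds to $J$ (hence to passing from $\|\cdot\|_{\varphi_0}$ to $\|\cdot\|_{\varphi_1}$) and multiplication by $\psi\circ g$ corresponds to $\psi(J)$ (hence to $\|\cdot\|_\varphi$); both are immediate consequences of the spectral reduction \eqref{f3.15}, \eqref{f3.21} already proved. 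Thus the proof is a short computation once the spectral picture from Theorem~\ref{th2.6} is imported.
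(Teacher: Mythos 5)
Your proposal is correct and is essentially the paper's own argument: the paper likewise reduces \eqref{f3b.3} to the pointwise bound $\psi^{2}(t)\leq c_{\psi,\varkappa}^{2}\psi^{2}(\tau)\max\{1,t^{2}/\tau^{2}\}\leq c_{\psi,\varkappa}^{2}\psi^{2}(\tau)(1+t^{2}/\tau^{2})$ from \eqref{f3b.2} and integrates it against the spectral decomposition of $J$ over $\mathrm{Spec}\,J\subseteq[\varkappa,\infty)$, using $\|u\|_{\varphi}=\|\psi(J)u\|_{\varphi_0}$ and $\|Ju\|_{\varphi_0}=\|u\|_{\varphi_1}$. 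The only cosmetic difference is that the paper works with the resolution of the identity $E_t$ of $J$ directly in $H^{\varphi_0}_{A}$, whereas you pass through the multiplication-operator model on $L_2(R,d\mu)$; the two formulations are interchangeable.
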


Before we prove this theorem, let us comment formula \eqref{f3b.3}. It follows from \eqref{f3b.2} that $\psi(t)\leq c_{\psi,\varkappa}\psi(\tau)$ whenever $\varkappa\leq t\leq\tau$ and that $\psi(t)/t\leq c_{\psi,\varkappa}\psi(\tau)/\tau$ whenever $\varkappa\leq\tau\leq t$. Hence, $\psi$ is slowly equivalent to an increasing function on the set $[\varkappa,\infty)$, and the function $\psi(\tau)/\tau$ is slowly equivalent to a decreasing function on the same set. Of the main interest is the case where $\psi(\tau)\to\infty$ and $\psi(\tau)/\tau\to0$ as $\tau\to\infty$. In this case, it is useful to rewrite inequality \eqref{f3b.3} in the form
\begin{equation}\label{f3b.4}
\|u\|_{\varphi}\leq c_{\psi,\varkappa}
\biggl(\frac{\psi^{2}(\tau)}{\tau^{2}}\|u\|_{\varphi_1}^{2}+
\psi^{2}(\tau)\|u\|_{\varphi_0}^{2}\biggr)^{1/2}.
\end{equation}
Restricting ourselves to the Hilbert scale $\{H^{s}_{A}:s\in\mathbb{R}\}$, we conclude that this inequality becomes
\begin{equation}\label{f3b.5}
\|u\|_{s}\leq\bigl(\tau^{2(\theta-1)}\|u\|_{s_1}^{2}+
\tau^{2\theta}\|u\|_{s_0}^{2}\bigr)^{1/2}
\quad\mbox{whenever}\;\;u\in H^{s_1}_{A};
\end{equation}
here, the real numbers $s$, $s_0$, $s_1$, $\theta$, and $\tau$ satisfy the conditions
\begin{equation}\label{f3b.6}
s_0<s_1,\quad 0<\theta<1,\quad s=(1-\theta)s_0+\theta s_1,
\end{equation}
and $\tau\geq1$. Indeed, we only need to take
\begin{equation}\label{f3b.7}
\varphi_0(t)\equiv t^{s_0},\quad\varphi_1(t)\equiv t^{s_1},\quad \psi(\tau)\equiv\tau^{\theta},\quad\mbox{and}\quad\varphi(t)\equiv t^{s}\;\;\mbox{by \eqref{f2.8}}
\end{equation}
in \eqref{f3b.4} and observe that $c_{\psi,\varkappa}=1$ for $\psi$ taken. Interpolational inequalities of type \eqref{f3b.5} for Sobolev scales are used in the theory of partial differential operators (see, e.g., \cite[Section~1, Subsection~6]{AgranovichVishik64}).

\begin{proof}[Proof of Theorem $\ref{th3b.1}$.] Let $J$ denote the generating operator for the pair $[H^{\varphi_{0}}_{A},H^{\varphi_{1}}_{A}]$. Recall that $J$ is a positive definite self-adjoint operator in the Hilbert space $H^{\varphi_{0}}_{A}$ and that $\|Ju\|_{\varphi_{0}}=\|u\|_{\varphi_{1}}$ for every $u\in H^{\varphi_{1}}_{A}=\mathrm{Dom}\,J$. According to  \eqref{f3.5bis}, we have $\mathrm{Spec}\,J\subseteq[\varkappa,\infty)$.

Let $E_{t}$, $t\geq\varkappa$, be the resolution of the identity associated with the self-adjoint operator $J$. Choosing  $\tau\geq\varkappa$ and $u\in H^{\varphi_{1}}_{A}$ arbitrarily, we get
\begin{align*}
\|u\|_{\varphi}^{2}&=\|\psi(J)u\|_{\varphi_{0}}^{2}=
\int\limits_{\varkappa}^{\infty}\psi^{2}(t)\,d(E_{t}u,u)_{\varphi_{0}}\\
&\leq c_{\psi,\varkappa}^{2}\,\psi^{2}(\tau)\int\limits_{\varkappa}^{\infty}
\max\biggl\{1,\frac{t^{2}}{\tau^{2}}\biggr\}d(E_{t}u,u)_{\varphi_{0}}\\
&\leq c_{\psi,\varkappa}^{2}\,\psi^{2}(\tau)\int\limits_{\varkappa}^{\infty}
\biggl(1+\frac{t^{2}}{\tau^{2}}\biggr)d(E_{t}u,u)_{\varphi_{0}}\\
&=c_{\psi,\varkappa}^{2}\,\psi^{2}(\tau)\bigl(\|u\|_{\varphi_0}^{2}+
\tau^{-2}\,\|Ju\|_{\varphi_0}^{2}\bigr)\\
&=c_{\psi,\varkappa}^{2}\,\psi^{2}(\tau)\bigl(\|u\|_{\varphi_0}^{2}+
\tau^{-2}\,\|u\|_{\varphi_1}^{2}\bigr).
\end{align*}
Here, we use the interpolation formula $[H^{\varphi_{0}}_{A},H^{\varphi_{1}}_{A}]_{\psi}=H^{\varphi}_{A}$ and inequality \eqref{f3b.2}. Thus, the required inequality \eqref{f3b.3} is   proved.
\end{proof}

Let us consider an application of Theorem~\ref{th3b.1}. We arbitrarily choose $u\in H^{\varphi_{1}}_{A}$ such that $u\neq0$. Put $\tau:=\|u\|_{\varphi_1}/\|u\|_{\varphi_0}$ in \eqref{f3b.3} and note that $\tau\geq\varkappa$ in view of \eqref{f3.2}. We then obtain the interpolational inequality
\begin{equation}\label{f3b.9new}
\|u\|_{\varphi}\leq c_{\psi,\varkappa}\sqrt{2}\,\|u\|_{\varphi_0}\,
\psi\biggl(\frac{\|u\|_{\varphi_1}}{\|u\|_{\varphi_0}}\biggr).
\end{equation}
If the function
\begin{equation}\label{f3b.10}
\chi(\tau):=\psi^{2}(\sqrt{\tau})\quad\mbox{is concave on}\quad
[\varkappa^{2},\infty),
\end{equation}
then \eqref{f3b.9new} holds true without the factor $c_{\psi,\varkappa}\sqrt{2}$. Indeed, choosing $v\in H^{\varphi_{1}}_{A}$ with $\|v\|_{\varphi_0}=1$ arbitrarily, we get
\begin{align*}
\|v\|_{\varphi}^2&=
\int\limits_{\varkappa}^{\infty}\psi^{2}(t)\,d(E_{t}v,v)_{\varphi_{0}}=
\int\limits_{\varkappa}^{\infty}\chi(t^2)\,d(E_{t}v,v)_{\varphi_{0}}\\
&\leq\chi\Biggl(\,\int\limits_{\varkappa}^{\infty}
t^2\,d(E_{t}v,v)_{\varphi_{0}}\Biggr)=
\chi\bigl(\|Jv\|_{\varphi_0}^2\bigr)=
\chi\bigl(\|v\|_{\varphi_1}^2\bigr)=
\psi^2\bigl(\|v\|_{\varphi_1}\bigr)
\end{align*}
due to the Jensen inequality applied to the concave function $\chi$. Here, $E_{t}$ is the same as that in the proof of Theorem~\ref{th3b.1}, and
\begin{equation*}
\int\limits_{\varkappa}^{\infty}d(E_{t}v,v)_{\varphi_{0}}=
\|v\|_{\varphi_0}^2=1.
\end{equation*}
Putting $v:=u/\|u\|_{\varphi_{0}}$ in the inequality $\|v\|_{\varphi}\leq\psi(\|v\|_{\varphi_1})$ just obtained, we conclude that
\begin{equation}\label{f3b.11}
\|u\|_{\varphi}\leq\|u\|_{\varphi_0}\,
\psi\biggl(\frac{\|u\|_{\varphi_1}}{\|u\|_{\varphi_0}}\biggr)
\quad\mbox{under condition \eqref{f3b.10}}.
\end{equation}

This interpolational inequality is equivalent to Variable Hilbert Scale Inequality \cite[Theorem~1, formula (9)]{HeglandAnderssen11} on the supplementary assumption that both functions $\varphi_0$ and $\varphi_1$ are continuous. (Note that the norm $\|\cdot\|_{\varphi}$ used in the cited article \cite{HeglandAnderssen11} means the norm $\|\cdot\|_{\sqrt{\varphi}}$ used by us. Besides, there is no assumption in this article that the function $\varphi_0/\varphi_1$ is bounded.)

In the case of the Hilbert scale $\{H^{s}_{A}:s\in\mathbb{R}\}$, inequality \eqref{f3b.11} becomes
\begin{equation}\label{f3b.12}
\|u\|_{s}\leq\|u\|_{s_0}^{1-\theta}\,\|u\|_{s_1}^{\theta}
\end{equation}
provided that the real numbers $s$, $s_0$, $s_1$, and $\theta$ satisfy \eqref{f3b.6} (we use the power functions \eqref{f3b.7}). The interpolational inequality \eqref{f3b.12} is well known \cite[Section~9, Subsection~1]{KreinPetunin66} and means that the Hilbert scale is a normal scale of spaces.

The interpolational inequalities just considered deal with norms of vectors. Now we focus our attention on interpolational inequalities that involve norms of linear operators acting continuously between appropriate Hilbert spaces $H^{\varphi}_{A}$ and~$G^{\eta}_{Q}$. Here, $G$ (just as $H$) is a separable infinite-dimensional complex Hilbert space, and $Q$ is a counterpart of $A$ for $G$. Namely, $Q$ is a self-adjoint unbounded linear operator in $G$ such that $\mathrm{Spec}\,Q\subseteq[1,\infty)$. We suppose that functions $\eta_{0},\eta_{1},\eta:[1,\infty)\to(0,\infty)$ satisfy analogous conditions to those imposed on $\varphi_{0}$, $\varphi_{1}$, and $\varphi$ at the beginning of this section. Namely, these functions are Borel measurable, and the function $\eta_{0}/\eta_{1}$ is bounded, and
\begin{equation}\label{f3b.13}
\eta(t)=\eta_{0}(t)\,\psi\biggl(\frac{\eta_{1}(t)}{\eta_{0}(t)}\biggr)
\quad\mbox{whenever}\quad t\geq1.
\end{equation}

We suppose that a linear mapping $T$ is given on $H^{\varphi_0}_{A}$ and satisfies the following condition: the restriction of $T$ to the space $H^{\varphi_j}_{A}$ is a bounded operator
\begin{equation}\label{f3b.14}
T:H^{\varphi_j}_{A}\to G^{\eta_j}_{Q}\quad\mbox{for each}\quad j\in\{0,1\}.
\end{equation}
Then the restriction of $T$ to $H^{\varphi}_{A}$ is a bounded operator
\begin{equation}\label{f3b.15}
T:H^{\varphi}_{A}=\bigl[H^{\varphi_0}_{A},H^{\varphi_1}_{A}\bigr]_{\psi}\to \bigl[G^{\eta_0}_{Q},G^{\eta_1}_{Q}\bigr]_{\psi}=G^{\eta}_{Q}
\end{equation}
according to Theorem~\ref{th2.6} and because $\psi$ is an interpolation parameter. Let $\|T\|_{j}$ and $\|T\|$ denote the norms of operators \eqref{f3b.14} and \eqref{f3b.15} respectively. Then
\begin{equation}\label{f3b.16}
\|T\|\leq c\,\max\{\|T\|_0,\|T\|_1\}
\end{equation}
for some number $c>0$ that does not depend on $T$ but may depend on $\psi$ and the spaces $H^{\varphi_j}_{A}$ and $G^{\eta_j}_{Q}$ (see, e.g., \cite[Theorem 2.4.2]{BerghLefstrem76}). This is an interpolational inequality for operator norms, which means that the method of quadratic  interpolation is uniform.

We will consider a more precise interpolational inequality than \eqref{f3b.16}; it involves $\psi$ in some way. Put
\begin{equation*}
\nu:=\min\biggl\{\inf_{t\geq1}\frac{\varphi_{1}(t)}{\varphi_{0}(t)},\,
\inf_{t\geq1}\frac{\eta_{1}(t)}{\eta_{0}(t)}\biggr\}>0,
\end{equation*}
and let $c_{\psi,\nu}$ denote a positive number such that inequality \eqref{f3b.2} holds true with $\nu$ taken instead of $\varkappa$. Without loss of generality we suppose that
\begin{equation}\label{f3b.17}
\frac{\psi(t)}{\psi(\tau)}\leq c_{\psi,\nu}\max\biggl\{1,\frac{t}{\tau}\biggr\}\quad\mbox{for all}\;\; t,\tau>0.
\end{equation}
(Hence, $\psi$ is pseudoconcave on $(0,\infty)$ according to
\cite[Lemma 5.4.3]{BerghLefstrem76}.) We can achieve this by
redefining $\psi$ properly on $(0,\nu)$, e.g., by the formula $\psi(t):=\psi(\nu)\nu^{-1}t$ whenever $0<t<\nu$. This does not change $\varphi$ and $\eta$ in view of \eqref{f2.8} and \eqref{f3b.13}. Let $\widetilde{\psi}$ denote the dilation function for $\psi$, i.e.
\begin{equation}\label{f3b.18}
\widetilde{\psi}(\lambda):=\sup_{t>0}\frac{\psi(\lambda t)}{\psi(t)}
\leq c_{\psi,\nu}\max\{1,\lambda\}
\quad\mbox{whenever}\;\;\lambda>0.
\end{equation}

\begin{theorem}\label{th3b.2}
The following interpolational inequality holds true:
\begin{equation}\label{f3b.19}
\|T\|\leq c_{\psi,\nu}^{2}\sqrt{8}\,\|T\|_{0}\,
\widetilde{\psi}\biggl(\frac{\|T\|_{1}}{\|T\|_{0}}\biggr).
\end{equation}
\end{theorem}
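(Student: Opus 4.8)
The plan is to prove \eqref{f3b.19} by the same spectral-and-scaling idea that underlies Theorem~\ref{th3b.1}, but applied now to the \emph{operator-norm} estimate obtained from Theorem~\ref{th2.6}. The starting point is that, by applying Theorem~\ref{th2.6} to the pair $\{H^{\varphi_0}_A,H^{\varphi_1}_A\}$ with interpolation parameter $\psi(\lambda t/\cdot)$—or, more simply, by interpolating $T$ between the rescaled pairs—one gets, for every $\lambda>0$, a bounded operator $T$ between $[H^{\varphi_0}_A,H^{\varphi_1}_A]_{\psi_\lambda}$ and $[G^{\eta_0}_Q,G^{\eta_1}_Q]_{\psi_\lambda}$, where $\psi_\lambda(t):=\psi(t)/\psi(\lambda)$ or a similar renormalization; the point is that replacing $\psi$ by $\psi_\lambda$ corresponds to replacing the generating operators $J$ (for the domain pair) and its counterpart on the $G$-side by $\lambda^{-1}J$, because the generating operator of $\{H^{\varphi_0}_A,H^{\varphi_1}_A\}$ is multiplication by $(\varphi_1/\varphi_0)\circ\alpha$ as established in the proof of Theorem~\ref{th2.6}. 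So the whole one-parameter family of interpolation spaces is available, and each gives a bound on $\|T\|$ of the form $\|T\|\le c\,\sup_{t>0}\bigl(\psi(t)^{-1}\max\{\text{something involving }\|T\|_0,\|T\|_1,t\}\bigr)$.

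Concretely, first I would fix $\lambda>0$ and use \eqref{f3b.17} in the sharpened form $\psi(t)\le c_{\psi,\nu}\,\psi(\lambda)\max\{1,t/\lambda\}=c_{\psi,\nu}\,\psi(\lambda)\lambda^{-1}\max\{\lambda,t\}$ inside the spectral integral, exactly as in the proof of Theorem~\ref{th3b.1}. For $u\in H^{\varphi_1}_A$ with generating operator $J$ of the domain pair, writing $E_t$ for its resolution of the identity,
\begin{align*}
\|Tu\|_{\eta}^{2}
&=\int_{\nu}^{\infty}\psi^{2}(s)\,d\bigl(\widehat E_{s}\,Tu,Tu\bigr)_{\eta_0}
\le c_{\psi,\nu}^{2}\,\frac{\psi^{2}(\lambda)}{\lambda^{2}}
\int_{\nu}^{\infty}\bigl(\lambda^{2}+s^{2}\bigr)\,d\bigl(\widehat E_{s}\,Tu,Tu\bigr)_{\eta_0}\\
&=c_{\psi,\nu}^{2}\,\frac{\psi^{2}(\lambda)}{\lambda^{2}}
\Bigl(\lambda^{2}\,\|Tu\|_{\eta_0}^{2}+\|\widehat J\,Tu\|_{\eta_0}^{2}\Bigr)
=c_{\psi,\nu}^{2}\,\frac{\psi^{2}(\lambda)}{\lambda^{2}}
\Bigl(\lambda^{2}\,\|Tu\|_{\eta_0}^{2}+\|Tu\|_{\eta_1}^{2}\Bigr),
\end{align*}
where $\widehat J$, $\widehat E_s$ are the generating operator and spectral measure of the pair $\{G^{\eta_0}_Q,G^{\eta_1}_Q\}$. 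Bounding $\|Tu\|_{\eta_0}\le\|T\|_0\,\|u\|_{\varphi_0}$ and $\|Tu\|_{\eta_1}\le\|T\|_1\,\|u\|_{\varphi_1}$, and then using the same trick for the lower spaces—namely $\|u\|_{\varphi_0}^2$ and $\|u\|_{\varphi_1}^2$ combine into $\|u\|_{\varphi}^2$ after a spectral estimate analogous to Theorem~\ref{th3b.1} run in reverse—one reaches an inequality of the form $\|Tu\|_\eta\le c_{\psi,\nu}^2\sqrt{8}\,\|T\|_0\,\psi(\lambda)\lambda^{-1}\max\{\lambda\mu,\;\|T\|_1/\|T\|_0\cdot\mu'\}\,\|u\|_\varphi$ with the spectral parameters $\mu,\mu'$ of $J$; choosing $\lambda$ optimally (essentially $\lambda=\|T\|_1/\|T\|_0$ times the relevant spectral ratio) turns the $\lambda$-dependence into the dilation function $\widetilde\psi(\|T\|_1/\|T\|_0)$ via its definition \eqref{f3b.18}.

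The cleanest route is probably to avoid juggling two spectral decompositions at once: instead interpolate the \emph{scaled} operators. Apply Theorem~\ref{th2.6} twice, once on the $H$-side and once on the $G$-side, with the \emph{same} $\psi$ but noting that the pair $\{H^{\varphi_0}_A,H^{(\lambda)\varphi_1}_A\}$—where $\varphi_1$ is replaced by $\lambda\varphi_1$—has generating operator $\lambda\cdot\bigl((\varphi_1/\varphi_0)\circ\alpha\bigr)$, hence interpolation space $H^{\varphi^{(\lambda)}}_A$ with $\varphi^{(\lambda)}(t)=\varphi_0(t)\psi(\lambda\varphi_1(t)/\varphi_0(t))$. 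Since $T$ is still bounded $H^{\varphi_j}_A\to G^{\eta_j}_Q$ with the \emph{same} norms $\|T\|_j$ (scaling the target norm by $\lambda$ scales both marginal norms on one side by $\lambda$ only if we rescale consistently), the interpolational operator inequality \eqref{f3b.16} applied to the rescaled pair gives $\|T:H^{\varphi^{(\lambda)}}_A\to G^{\eta^{(\lambda)}}_Q\|\le c\max\{\|T\|_0,\lambda\|T\|_1\cdot\lambda^{-1}\}$-type bounds, and the norm-comparison between $H^{\varphi}_A$ and $H^{\varphi^{(\lambda)}}_A$ costs exactly a factor $\widetilde\psi(\lambda)$ by definition of the dilation function. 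Optimizing over $\lambda$ yields \eqref{f3b.19}; the constant $c_{\psi,\nu}^2\sqrt 8$ comes from two applications of the sharpened pseudoconcavity bound \eqref{f3b.17} (one on each side) together with the elementary $\max\{1,x^2\}\le 1+x^2\le 2\max\{1,x^2\}$ losses, exactly as the $\sqrt2$'s accumulated in \eqref{f3b.9new}.

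\textbf{The main obstacle} I anticipate is bookkeeping the two independent spectral decompositions (that of $J$ on the domain side and $\widehat J$ on the target side) simultaneously: the vector $u$ lives in the $H$-scale and $Tu$ in the $G$-scale, so one cannot diagonalize both in the same measure space, and the naive estimate produces a product $\max\{1,s/\lambda\}\cdot\max\{1,\sigma\mu/\lambda\}$ of two unrelated spectral ratios rather than a single one. The fix is to push all the $\psi$-dependence onto the \emph{target} side only—estimate $\|Tu\|_\eta$ in terms of $\|Tu\|_{\eta_0},\|Tu\|_{\eta_1}$ via the $\widehat J$-spectral integral and \eqref{f3b.17}, then bound those by $\|T\|_j\|u\|_{\varphi_j}$, and finally recombine $\|u\|_{\varphi_0},\|u\|_{\varphi_1}$ using the trivial inequality $\|u\|_{\varphi_0}\le\varkappa^{-1}\|u\|_{\varphi_1}$ together with $\|u\|_\varphi\ge c\,\psi(\nu)\|u\|_{\varphi_0}$ rather than trying to be sharp there—so only one dilation function $\widetilde\psi$ survives, at the cost of the extra constant. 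Getting the clean coefficient $c_{\psi,\nu}^2\sqrt8$ rather than something messier is then just a matter of doing the two $\max\le$-sum passes carefully and choosing $\lambda=\|T\|_1/\|T\|_0$.
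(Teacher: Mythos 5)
Your proposal does not close the main gap it itself identifies, and the paper's actual proof proceeds quite differently: it is a reduction to Fan's operator inequality \cite[formulas (2.2) and (2.3)]{Fan11}. For concave $\chi(t)=\psi^{2}(\sqrt{t})$ Fan's inequality gives \eqref{f3b.19} with constant $\sqrt{2}$; for general pseudoconcave $\psi$ the paper passes to the least concave majorant $\chi_{1}$ of $\chi$, loses a factor $\sqrt{2}\,c_{\psi,\nu}$ in comparing $\psi$ with $\psi_{\ast}(t)=\sqrt{\chi_{1}(t^{2})}$ and another such factor in comparing the dilation functions, which is exactly where $c_{\psi,\nu}^{2}\sqrt{8}$ comes from. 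No direct spectral computation is attempted, and for good reason: $T$ commutes with neither generating operator.

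The concrete failure in your first route is this: after the target-side estimate you are left with $\|T\|_{0}^{2}\|u\|_{\varphi_{0}}^{2}+\lambda^{-2}\|T\|_{1}^{2}\|u\|_{\varphi_{1}}^{2}$, and writing this as $\int\bigl(\|T\|_{0}^{2}+\lambda^{-2}\|T\|_{1}^{2}t^{2}\bigr)\,d(E_{t}u,u)_{\varphi_{0}}$ you would need the pointwise bound $\psi^{2}(\lambda)\bigl(\|T\|_{0}^{2}+\lambda^{-2}\|T\|_{1}^{2}t^{2}\bigr)\leq C\,\|T\|_{0}^{2}\,\widetilde{\psi}^{2}(\|T\|_{1}/\|T\|_{0})\,\psi^{2}(t)$ for \emph{all} $t\geq\varkappa$ with a \emph{single} $\lambda$; the left side grows like $t^{2}$ while the right side only grows like $\psi^{2}(t)$, so this fails for large $t$ whenever $\psi(t)=o(t)$ --- the case of main interest. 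Your proposed fix goes the wrong way: $\|u\|_{\varphi_{1}}$ is not dominated by any multiple of $\|u\|_{\varphi}$, so the ``crude recombination'' via $\|u\|_{\varphi_{0}}\leq\varkappa^{-1}\|u\|_{\varphi_{1}}$ cannot produce $\|u\|_{\varphi}$ on the right. Your rescaling route is the correct heuristic (it is how one proves such inequalities for the $K$-method), but it needs the uniform bound \eqref{f3b.16} with a constant depending only on the pseudoconcavity constant $c_{\psi,\nu}$, i.e.\ invariant under $\psi\mapsto\psi(\lambda\,\cdot)$ and under rescaling the norms of the end spaces, whereas \eqref{f3b.16} as stated allows $c$ to depend on the spaces themselves. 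Supplying that quantitative, scale-invariant constant for quadratic interpolation is precisely the content of Fan's theorem, so without invoking it (or reproving it) the argument is circular; the announced constant $c_{\psi,\nu}^{2}\sqrt{8}$ is likewise asserted rather than derived.
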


\begin{proof}
It follows directly from Theorem~\ref{th2.6} (namely, from the equalities in \eqref{f3b.15}) and the result by Fan \cite[formula (2.3)]{Fan11} that inequality \eqref{f3b.19} holds true with a certain number $c>0$ written instead of $c_{\psi,\nu}^{2}\sqrt{8}$. Note that Fan \cite{Fan11} denotes the interpolation space $[H_{0},H_{1}]_{\psi}$ by $\overline{\mathcal{H}}_{\chi}$ where $\chi(t)\equiv\psi^{2}(\sqrt{t})$ and that $\psi$ is pseudoconcave on $(0,\infty)$ if and only if so is $\chi$, with the dilation function $\widetilde{\chi}(t)\equiv\widetilde{\psi}^{2}(\sqrt{t})$.
(As in Section~\ref{sec2}, $[H_{0},H_{1}]$ is a regular pair of separable complex Hilbert spaces.) Besides, if $\chi$ is concave on $(0,\infty)$, then $c=\sqrt{2}$. This is a direct consequence of Theorem~\ref{th2.6} and the inequality \cite[formula (2.2)]{Fan11}.
Let us show that we may take $c=c_{\psi,\nu}^{2}\sqrt{8}$ in \eqref{f3b.19} in the general case where $\psi$ is pseudoconcave on $(0,\infty)$.

Considering the function $\chi(t):=\psi^2(\sqrt{t})$ of $t>0$ and using \eqref{f3b.17}, we have
\begin{equation*}
\frac{\chi(t)}{\chi(\tau)}\leq
c_{\psi,\nu}^2\max\biggl\{1,\frac{t}{\tau}\biggr\}
\quad\mbox{for all}\;\; t,\tau>0.
\end{equation*}
It follows from this that
\begin{equation*}
\frac{1}{2c_{\psi,\nu}^2}\,\chi_1(t)\leq
\chi(t)\leq\chi_1(t)\quad\mbox{whenever}\quad t>0,
\end{equation*}
with $\chi_1:(0,\infty)\to(0,\infty)$ being the least concave majorant of $\chi$ (see \cite[p.~91]{Peetre68}). Hence,
\begin{equation}\label{est-psi}
\frac{1}{\sqrt{2}\,c_{\psi,\nu}}\,\psi_{\ast}(t)\leq
\psi(t)\leq\psi_{\ast}(t)\quad\mbox{whenever}\quad t>0,
\end{equation}
where $\psi_{\ast}(t):=\sqrt{\chi_1(t^2)}$ of $t>0$. Since the function
$\psi_{\ast}^{2}(\sqrt{t})\equiv\chi_{1}(t)$ is concave on $(0,\infty)$, we conclude by \cite[formula (2.2)]{Fan11} that
\begin{equation}\label{est-T-ast}
\|T\|_{\ast}\leq \sqrt{2}\,\|T\|_{0}\,
\widetilde{\psi_{\ast}}\biggl(\frac{\|T\|_{1}}{\|T\|_{0}}\biggr).
\end{equation}
Here, $\|T\|_{\ast}$ denotes the norm of the bounded operator
\begin{equation*}
T:\bigl[H^{\varphi_0}_{A},H^{\varphi_1}_{A}\bigr]_{\psi_{\ast}}\to \bigl[G^{\eta_0}_{Q},G^{\eta_1}_{Q}\bigr]_{\psi_{\ast}},
\end{equation*}
and $\widetilde{\psi_{\ast}}$ is the dilation function for $\psi_{\ast}$.
It follows from \eqref{f3b.15} and \eqref{est-psi} that
\begin{equation}\label{f3b.20}
\|T\|\leq \sqrt{2}\,c_{\psi,\nu}\,\|T\|_{\ast}.
\end{equation}
Besides,
\begin{equation}\label{f3b.21}
\widetilde{\psi_{\ast}}(\lambda)=
\sup_{t>0}\frac{\psi_{\ast}(\lambda t)}{\psi_{\ast}(t)}\leq
\sqrt{2}\,c_{\psi,\nu}\widetilde{\psi}(\lambda)
\quad\mbox{whenever}\;\;\lambda>0.
\end{equation}
Now \eqref{est-T-ast}, \eqref{f3b.20}, and \eqref{f3b.21} yield the required inequality \eqref{f3b.19}.
\end{proof}

The inequality \eqref{f3b.19} is more precise than \eqref{f3b.16} in view of \eqref{f3b.18}.

\begin{remark}
If the function $\psi$ is concave on $(0,\infty)$, then $c_{\psi,\nu}=1$ in inequality \eqref{f3b.19}. Besides, we may write $\sqrt{2}$ instead of $c_{\psi,\nu}^{2}\sqrt{8}$ in this inequality provided that the function
$\psi^{2}(\sqrt{t})$ of $t>0$ is concave on $(0,\infty)$, as we have noted in the proof of Theorem~\ref{th3b.2}.
\end{remark}

\section{Applications to function spaces}\label{sec4}

In this section, we will show how the concept of the extended Hilbert scale allows us to introduce and investigate wide classes of Hilbert function (or distribution) spaces related to Sobolev spaces on manifolds.

Let $\Gamma$ be a separable paracompact infinitely smooth Riemannian
manifold without boundary. Consider the separable complex Hilbert space  $H:=L_{2}(\Gamma)$ of all functions $f:\Gamma\to\mathbb{C}$ which are square integrable over $\Gamma$ with respect to the Riemann measure. Let $\Delta_{\Gamma}$ be the Laplace\,--\,Beltrami operator on $\Gamma$; it is defined on the linear manifold $C^{\infty}_{0}(\Gamma)$ of all compactly supported functions $f\in C^{\infty}(\Gamma)$. Assume that the closure of this operator is self-adjoint in $H$, and denote this closure by $\Delta_{\Gamma}$ too. (Specifically, this self-adjointness follows from the completeness of $\Gamma$ under the Riemann metric \cite[p.~140]{Gaffney54}. For incomplete Riemannian manifolds,  sufficient conditions for the self-adjointness are given, e.g., in \cite{BravermanMilatovicShubin02, MilatovicTruc16}). Then the operator $A:=(1-\Delta_{\Gamma})^{1/2}$ is self-adjoint and positive definite with the lower bound $r=1$. Therefore, the separable Hilbert space $H_{A}^{\varphi}$ is defined for every Borel measurable function $\varphi:\nobreak[1,\infty)\to(0,\infty)$; we denote this space by $H_{A}^{\varphi}(\Gamma)$. If $\varphi(t)\equiv t^{s}$ for certain $s\in\mathbb{R}$, then $H_{A}^{\varphi}(\Gamma)$ becomes the Sobolev space $H^{s}(\Gamma)$ of order~$s$. According to Theorem~\ref{th2.1}, the extended $A$-scale consists (up to equivalence of norms) of all Hilbert spaces $H_{A}^{\varphi}(\Gamma)$ where $\varphi\in\mathrm{OR}$. In other words, the class $\{H_{A}^{\varphi}(\Gamma):\varphi\in\mathrm{OR}\}$ consists of all interpolation Hilbert spaces between inner product Sobolev spaces over $\Gamma$. Therefore, it is naturally to call this class the extended Sobolev scale over $\Gamma$.

Now we focus our attention on two important cases where $\Gamma$ is the
Euclidean space $\mathbb{R}^{n}$ and where $\Gamma$ is a compact boundaryless manifold. Generalizing the above consideration, we use some elliptic operators as $A$. We will prove that the extended Hilbert scales generated by these operators consist of some distribution spaces, admit explicit description with the help of the Fourier transform and local charts on $\Gamma$, and do not depend on the choice of the elliptic operators and these charts. Since we consider complex linear spaces formed by functions or distributions, all functions and distributions are supposed to be complex-valued unless otherwise stated.

\subsection{}\label{sec4.1}
In this subsection, we consider the extended Hilbert scale generated by a uniformly elliptic operator on $\mathbb{R}^{n}$, with $n\geq1$. Here, we put $H:=L_{2}(\mathbb{R}^{n})$ and note that the Riemann measure on $\mathbb{R}^{n}$ is the Lebesgue measure. Let $(\cdot,\cdot)_{\mathbb{R}^{n}}$ and $\|\cdot\|_{\mathbb{R}^{n}}$ stand respectively for the inner product and norm in $L_{2}(\mathbb{R}^{n})$.

Following \cite[Section 1.1]{Agranovich94}, we let $\Psi^{m}(\mathbb{R}^{n})$, where $m\in\mathbb{R}$, denote the class of all pseudodifferential operator (PsDO) $\mathcal{A}$ on $\mathbb{R}^{n}$ whose symbols $a$ belong to $C^{\infty}(\mathbb{R}^{2n})$ and satisfy the following condition: for every multi-indices $\alpha,\beta\in\mathbb{Z}_{+}^{n}$ there exists a number $c_{\alpha,\beta}>0$ such that
\begin{equation*}
|\partial^{\alpha}_{x}\partial^{\beta}_{\xi}a(x,\xi)|\leq c_{\alpha,\beta}(1+|\xi|)^{m-|\beta|}\quad\mbox{for all}\quad x,\xi\in\mathbb{R}^{n},
\end{equation*}
with $x$ and $\xi$ being considered respectively as spatial and frequency variables. If $\mathcal{A}\in\Psi^{m}(\mathbb{R}^{n})$, we say that the (formal) order of $\mathcal{A}$ is $m$. A PsDO $\mathcal{A}\in\Psi^{m}(\mathbb{R}^{n})$ is called uniformly elliptic on $\mathbb{R}^{n}$ if there exist positive numbers $c_{1}$ and $c_{2}$ such that $|a(x,\xi)|\geq c_{1}|\xi|^{m}$ whenever $x,\xi\in\mathbb{R}^{n}$ and $|\xi|\geq c_{2}$ (see \cite[Section 3.1~b]{Agranovich94}).

We suppose henceforth in this subsection that
\begin{itemize}
\item[(a)] $\mathcal{A}$ is a PsDO of class $\Psi^{1}(\mathbb{R}^{n})$;
\item[(b)] $\mathcal{A}$ is uniformly elliptic on $\mathbb{R}^{n}$;
\item[(c)] the inequality $(\mathcal{A}w,w)_{\mathbb{R}^{n}}\geq\|w\|_{\mathbb{R}^{n}}^{2}$ holds true for every $w\in C^{\infty}_{0}(\mathbb{R}^{n})$.
\end{itemize}
Here, as usual, $C^{\infty}_{0}(\mathbb{R}^{n})$ denotes the set of all compactly supported functions $u\in C^{\infty}(\mathbb{R}^{n})$.

Consider the mapping
\begin{equation}\label{f4.1}
w\mapsto\mathcal{A}w,\quad\mbox{with}\quad w\in C^{\infty}_{0}(\mathbb{R}^{n}),
\end{equation}
as an unbounded linear operator in the Hilbert space $H=L_{2}(\mathbb{R}^{n})$. This operator is closable because the PsDO $\mathcal{A}\in\Psi^{1}(\mathbb{R}^{n})$ acts continuously from $L_{2}(\mathbb{R}^{n})$ to $H^{-1}(\mathbb{R}^{n})$ (see \cite[Theorem 1.1.2]{Agranovich94}). Here and below, $H^{s}(\mathbb{R}^{n})$ denotes the inner product Sobolev space of order $s\in\mathbb{R}$ over~$\mathbb{R}^{n}$.

Let $A$ denote the closure of the operator \eqref{f4.1} in $L_{2}(\mathbb{R}^{n})$. It follows from (a) and (b) that $\mathrm{Dom}\,A=H^{1}(\mathbb{R}^{n})$ (see \cite[Sections 2.3~c and 3.1~b]{Agranovich94}). Owing to (c), the operator $A$ is positive definite with the lower bound $r=1$. This implies due to (b) that $A$ is self-adjoint \cite[Sections 2.3~d and 3.1~b]{Agranovich94}, with $\mathrm{Spec}\,A\subseteq[1,\infty)$.

Thus, $A$ is the operator considered in Section~\ref{sec2}. Therefore, the separable Hilbert space $H_{A}^{\varphi}$ is defined for every Borel measurable function $\varphi:\nobreak[1,\infty)\to(0,\infty)$. We denote this space by $H_{A}^{\varphi}(\mathbb{R}^{n})$. An important example of $A$ is the operator $(1-\Delta)^{1/2}$, with $\Delta$ denoting the Laplace operator in $\mathbb{R}^{n}$. In this case, the PsDO $\mathcal{A}$ has the symbol $a(x,\xi)\equiv(1+|\xi|^{2})^{1/2}$.

If $\varphi(t)\equiv t^{s}$ for certain $s\in\mathbb{R}$, then it is possible to show that the space $H_{A}^{s}(\mathbb{R}^{n}):=H_{A}^{\varphi}(\mathbb{R}^{n})$ coincides with the Sobolev space $H^{s}(\mathbb{R}^{n})$ up to equivalence norms. Thus, the $A$-scale $\{H_{A}^{s}(\mathbb{R}^{n}):s\in\mathbb{R}\}$ is the Sobolev Hilbert scale. Let us show that the extended $A$-scale consists of some generalized Sobolev spaces, namely the spaces $H^{\varphi}(\mathbb{R}^{n})$ with $\varphi\in\mathrm{OR}$.

Let $\varphi\in\mathrm{OR}$. By definition, the linear space $H^{\varphi}(\mathbb{R}^{n})$ consists of all distributions $w\in\mathcal{S}'(\mathbb{R}^{n})$ that their Fourier transform $\widehat{w}$ is locally Lebesgue integrable over $\mathbb{R}^{n}$ and satisfies the condition
$$
\int\limits_{\mathbb{R}^{n}}\varphi^{2}(\langle\xi\rangle)\,
|\widehat{w}(\xi)|^{2}\,d\xi<\infty.
$$
Here, as usual, $\mathcal{S}'(\mathbb{R}^{n})$ is the linear topological space of all tempered distributions on $\mathbb{R}^{n}$, and
$\langle\xi\rangle:=(1+|\xi|^{2})^{1/2}$ is the smoothed modulus of $\xi\in\mathbb{R}^{n}$. The space $H^{\varphi}(\mathbb{R}^{n})$ is
endowed with the inner product
$$
(w_{1},w_{2})_{\varphi,\mathbb{R}^{n}}:=
\int\limits_{\mathbb{R}^{n}}\varphi^{2}(\langle\xi\rangle)\,
\widehat{w_{1}}(\xi)\,\overline{\widehat{w_{2}}(\xi)}\,d\xi
$$
and the corresponding norm $\|w\|_{\varphi,\mathbb{R}^{n}}:=
(w,w)_{\varphi,\mathbb{R}^{n}}^{1/2}$. This space is complete and separable with respect to this norm and is embedded continuously in $\mathcal{S}'(\mathbb{R}^{n})$; the set  $C^{\infty}_{0}(\mathbb{R}^{n})$ is dense in $H^{\varphi}(\mathbb{R}^{n})$ \cite[Theorem 2.2.1]{Hermander63}.

If $\varphi(t)\equiv t^{s}$ for some $s\in\mathbb{R}$, then $H^{\varphi}(\mathbb{R}^{n})$ becomes the Sobolev space $H^{s}(\mathbb{R}^{n})$. The Hilbert space $H^{\varphi}(\mathbb{R}^{n})$ is an isotropic case of the spaces introduced and investigated by Malgrange \cite{Malgrange57}, H\"ormander \cite[Sec. 2.2]{Hermander63} (see also \cite[Section 10.1]{Hermander83}), and Volevich and Paneah \cite[Section~2]{VolevichPaneah65}. These spaces are generalizations of Sobolev spaces to the case where a general enough function of frequency variables serves as an order of distribution spaces.

\begin{theorem}\label{th4.1}
Let $\varphi\in\mathrm{OR}$. Then the spaces $H^{\varphi}_{A}(\mathbb{R}^{n})$ and $H^{\varphi}(\mathbb{R}^{n})$ coincide as completions of $C^{\infty}_{0}(\mathbb{R}^{n})$ with respect to equivalent norms.
\end{theorem}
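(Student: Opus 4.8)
The plan is to realise both spaces, up to equivalent norms, as one and the same quadratic interpolation space between two inner-product Sobolev spaces, and then to use the density of $C^{\infty}_{0}(\mathbb{R}^{n})$ in H\"ormander's space $H^{\varphi}(\mathbb{R}^{n})$ to obtain the required completion-of-$C^{\infty}_{0}$ formulation.

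First I would fix $\varphi\in\mathrm{OR}$ and choose real numbers $s_{0}<s_{1}$ with $s_{0}<\sigma_{0}(\varphi)$ and $s_{1}>\max\{\sigma_{1}(\varphi),0\}$; then $s_{0},s_{1}$ satisfy~\eqref{f2.3} for $\varphi$, so for the function $\psi$ defined by~\eqref{f2.6} Theorem~\ref{th2.5} yields $\psi\in\mathcal{B}$, $\psi$ an interpolation parameter, and $[H^{s_{0}}_{A}(\mathbb{R}^{n}),H^{s_{1}}_{A}(\mathbb{R}^{n})]_{\psi}=H^{\varphi}_{A}(\mathbb{R}^{n})$ with equality of norms (here $H^{s}_{A}(\mathbb{R}^{n})$ denotes $H^{\varphi}_{A}(\mathbb{R}^{n})$ with $\varphi(t)\equiv t^{s}$). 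As recalled in Subsection~\ref{sec4.1}, $H^{s}_{A}(\mathbb{R}^{n})=H^{s}(\mathbb{R}^{n})$ with equivalent norms for every $s\in\mathbb{R}$; applying the interpolation property of $\psi$ to the identity operator, which is bounded in both directions between $H^{s_{j}}_{A}(\mathbb{R}^{n})$ and $H^{s_{j}}(\mathbb{R}^{n})$ for $j\in\{0,1\}$, I would deduce that $[H^{s_{0}}_{A}(\mathbb{R}^{n}),H^{s_{1}}_{A}(\mathbb{R}^{n})]_{\psi}$ and $[H^{s_{0}}(\mathbb{R}^{n}),H^{s_{1}}(\mathbb{R}^{n})]_{\psi}$ coincide up to equivalence of norms.

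Next I would identify the last interpolation space with $H^{\varphi}(\mathbb{R}^{n})$ by a direct computation on the Fourier side. With the standard inner-product norms $\|u\|_{H^{s}}^{2}=\int_{\mathbb{R}^{n}}\langle\xi\rangle^{2s}|\widehat{u}(\xi)|^{2}\,d\xi$, the Fourier transform reduces the generating operator $J$ of the regular pair $[H^{s_{0}}(\mathbb{R}^{n}),H^{s_{1}}(\mathbb{R}^{n})]$ to multiplication by $\langle\xi\rangle^{s_{1}-s_{0}}$, which is positive definite because $\langle\xi\rangle^{s_{1}-s_{0}}\geq1$; hence $\psi(J)$ is multiplication by $\psi(\langle\xi\rangle^{s_{1}-s_{0}})$. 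Since $\varphi(t)=t^{s_{0}}\psi(t^{s_{1}-s_{0}})$ for $t\geq1$ by~\eqref{f2.6}, $\varphi(t)\geq\mathrm{const}\cdot t^{s_{0}}$ for $t\geq1$ (so that $H^{\varphi}(\mathbb{R}^{n})\subseteq H^{s_{0}}(\mathbb{R}^{n})$), and $\langle\xi\rangle\geq1$, one gets
\begin{equation*}
\|u\|_{[H^{s_{0}}(\mathbb{R}^{n}),H^{s_{1}}(\mathbb{R}^{n})]_{\psi}}^{2}
=\int_{\mathbb{R}^{n}}\langle\xi\rangle^{2s_{0}}\,\psi\bigl(\langle\xi\rangle^{s_{1}-s_{0}}\bigr)^{2}\,|\widehat{u}(\xi)|^{2}\,d\xi
=\int_{\mathbb{R}^{n}}\varphi^{2}(\langle\xi\rangle)\,|\widehat{u}(\xi)|^{2}\,d\xi
=\|u\|_{\varphi,\mathbb{R}^{n}}^{2},
\end{equation*}
and $\mathrm{Dom}\,\psi(J)$ turns out to be exactly $H^{\varphi}(\mathbb{R}^{n})$, so $[H^{s_{0}}(\mathbb{R}^{n}),H^{s_{1}}(\mathbb{R}^{n})]_{\psi}=H^{\varphi}(\mathbb{R}^{n})$ with equality of norms (with equivalence only, if a different but equivalent Sobolev norm is taken). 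Combining the three identifications gives $H^{\varphi}_{A}(\mathbb{R}^{n})=H^{\varphi}(\mathbb{R}^{n})$ as Hilbert spaces with equivalent norms.

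Finally I would pass to the statement in terms of $C^{\infty}_{0}(\mathbb{R}^{n})$. Putting $t=1$ in~\eqref{f2.3} gives $\varphi(t)\leq\mathrm{const}\cdot t^{s_{1}}$ for $t\geq1$, whence $\mathrm{Dom}\,A^{s_{1}}\subseteq\mathrm{Dom}\,\varphi(A)$ by the spectral theorem; as $s_{1}\geq0$, it follows that $C^{\infty}_{0}(\mathbb{R}^{n})\subset H^{s_{1}}(\mathbb{R}^{n})=H^{s_{1}}_{A}(\mathbb{R}^{n})=\mathrm{Dom}\,A^{s_{1}}\subseteq\mathrm{Dom}\,\varphi(A)\subset H^{\varphi}_{A}(\mathbb{R}^{n})$, while $C^{\infty}_{0}(\mathbb{R}^{n})$ is dense in $H^{\varphi}(\mathbb{R}^{n})$ by \cite[Theorem~2.2.1]{Hermander63}. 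Since the two realisations of $C^{\infty}_{0}(\mathbb{R}^{n})$ are consistent---the embeddings of $H^{\varphi}_{A}(\mathbb{R}^{n})$ and of $H^{\varphi}(\mathbb{R}^{n})$ both factor through $\mathcal{S}'(\mathbb{R}^{n})$---the equality just proved shows that $C^{\infty}_{0}(\mathbb{R}^{n})$ is dense in $H^{\varphi}_{A}(\mathbb{R}^{n})$ as well; hence each of the two spaces is the completion of $C^{\infty}_{0}(\mathbb{R}^{n})$ in its own norm, and these two norms are equivalent on $C^{\infty}_{0}(\mathbb{R}^{n})$. The only non-formal input, and the main point where care is needed, is the equality $H^{s}_{A}(\mathbb{R}^{n})=H^{s}(\mathbb{R}^{n})$ for all $s\in\mathbb{R}$ used above, which rests on uniform elliptic regularity for the PsDO $\mathcal{A}$ together with the fact that the complex powers of $A$ are PsDOs of the corresponding order; once this is granted, the interpolation steps are automatic, $\psi$ being an interpolation parameter by Theorem~\ref{th2.5}.
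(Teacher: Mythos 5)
Your proposal is correct and follows essentially the same route as the paper: interpolate with the parameter $\psi$ of \eqref{f2.6} between two Sobolev spaces via Theorem~\ref{th2.5}, replace the $A$-realisations of the endpoints by the classical Sobolev spaces, and identify the resulting interpolation space with H\"ormander's $H^{\varphi}(\mathbb{R}^{n})$ (your Fourier-side computation of the generating operator $J$ is exactly the argument the paper sketches for $A=(1-\Delta)^{1/2}$ at the end of its proof; the paper also offers the citation \cite[Theorem~2.19]{MikhailetsMurach14} for this step). The one substantive difference is your choice of arbitrary real endpoints $s_{0}<s_{1}$, which forces you to invoke $H^{s}_{A}(\mathbb{R}^{n})=H^{s}(\mathbb{R}^{n})$ for non-integer $s$ --- a fact the paper only asserts with ``it is possible to show'' and which, as you note, requires that the fractional powers of $A$ be PsDOs of the corresponding order (a Seeley-type result that is less standard for uniformly elliptic operators on $\mathbb{R}^{n}$ than on closed manifolds). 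The paper deliberately avoids this by taking integer endpoints $\mp k$ with $-k<\sigma_{0}(\varphi)$ and $k>\sigma_{1}(\varphi)$: then $A^{k}:H^{k}(\mathbb{R}^{n})\to L_{2}(\mathbb{R}^{n})$ is an isomorphism by iterating the first-order lifting property, and the negative-order identification follows by duality, so no fractional calculus of PsDOs is needed. If you simply specialise your $s_{0},s_{1}$ to $\mp k$, your argument becomes fully self-contained and coincides with the paper's.
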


%%TO BE CONTINUED!!!

It is worth-wile to note that the norm of $w\in C^{\infty}_{0}(\mathbb{R}^{n})$ in $H^{\varphi}_{A}(\mathbb{R}^{n})$ is $\|\varphi(A)w\|_{\mathbb{R}^{n}}$ because
\begin{equation*}
\mathrm{Dom}\,\varphi(A)\supset\mathrm{Dom}\,A^{s_1}=
H^{s_1}_{A}(\mathbb{R}^{n})\supset C^{\infty}_{0}(\mathbb{R}^{n});
\end{equation*}
here $s_1$ is a positive number that satisfies \eqref{f2.3}.

According to Theorem~\ref{th4.1}, the space $H^{\varphi}_{A}(\mathbb{R}^{n})$, with $\varphi\in\mathrm{OR}$, does not depend on~$A$ up to equivalence of norms. Theorems \ref{th2.1} and \ref{th4.1} yield the following explicit description of the extended Hilbert scale generated by the considered operator $A$:

\begin{corollary}\label{cor4.2}
The extended $A$-scale consists (up to equivalence of norms) of all the spaces $H^{\varphi}(\mathbb{R}^{n})$ with $\varphi\in\mathrm{OR}$.
\end{corollary}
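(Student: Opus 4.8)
The plan is to combine Theorem~\ref{th2.1} with Theorem~\ref{th4.1}; no fresh analysis is required. First I would invoke Theorem~\ref{th2.1}, applied to the particular operator $A$ fixed in this subsection: that operator satisfies all the hypotheses imposed on the generating operator in Section~\ref{sec2} (it is positive definite with lower bound $r=1$, self-adjoint, and $\mathrm{Spec}\,A\subseteq[1,\infty)$), so Theorem~\ref{th2.1} applies verbatim and tells us that a Hilbert space $X$ belongs to the extended $A$-scale if and only if $X=H^{\varphi}_{A}(\mathbb{R}^{n})$ up to equivalence of norms for some $\varphi\in\mathrm{OR}$.

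Next I would feed each such $\varphi\in\mathrm{OR}$ into Theorem~\ref{th4.1}, which identifies $H^{\varphi}_{A}(\mathbb{R}^{n})$ with the H\"ormander-type space $H^{\varphi}(\mathbb{R}^{n})$ up to equivalence of norms (both realized as completions of $C^{\infty}_{0}(\mathbb{R}^{n})$). Since coincidence up to equivalence of norms is transitive, it follows that $X$ belongs to the extended $A$-scale if and only if $X=H^{\varphi}(\mathbb{R}^{n})$ up to equivalence of norms for some $\varphi\in\mathrm{OR}$, which is exactly the assertion of Corollary~\ref{cor4.2}.

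The only bookkeeping points worth a sentence are: the parameter $\varphi$ supplied by Theorem~\ref{th2.1} may be used directly in Theorem~\ref{th4.1}, since the latter covers \emph{every} $\varphi\in\mathrm{OR}$; and the class $\{H^{\varphi}(\mathbb{R}^{n}):\varphi\in\mathrm{OR}\}$ is, as a class of spaces described up to equivalence of norms, stable under this identification. There is no real obstacle here: the substantive content is Theorem~\ref{th4.1}, namely that the abstractly defined spaces $H^{\varphi}_{A}(\mathbb{R}^{n})$ coincide with the Fourier-analytically defined spaces $H^{\varphi}(\mathbb{R}^{n})$, and Corollary~\ref{cor4.2} merely merges that identification with the abstract description of the extended scale furnished by Theorem~\ref{th2.1}.
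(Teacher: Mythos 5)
Your argument is correct and is exactly the paper's: the corollary is stated there as an immediate consequence of Theorems~\ref{th2.1} and~\ref{th4.1}, combined precisely as you do (Theorem~\ref{th2.1} describing the extended $A$-scale via the spaces $H^{\varphi}_{A}(\mathbb{R}^{n})$ with $\varphi\in\mathrm{OR}$, and Theorem~\ref{th4.1} identifying each of these with $H^{\varphi}(\mathbb{R}^{n})$ up to equivalence of norms). Your bookkeeping remarks about the applicability of Theorem~\ref{th2.1} to this particular $A$ and the transitivity of coincidence up to equivalence of norms are accurate and add nothing the paper would dispute.
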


Thus, the class $\{H^{\varphi}(\mathbb{R}^{n}):\varphi\in\mathrm{OR}\}$ consists (up to equivalence if norms) of all Hilbert spaces each of which is an interpolation space between some Sobolev spaces $H^{s_0}(\mathbb{R}^{n})$ and $H^{s_1}(\mathbb{R}^{n})$ with $s_0<s_1$.
As we have noted, this class is called the extended Sobolev scale over~$\mathbb{R}^{n}$.  It was considered in \cite{MikhailetsMurach13UMJ3, MikhailetsMurach15ResMath1} and \cite[Section~2.4.2]{MikhailetsMurach14}.

\begin{proof}[Proof of Theorem $\ref{th4.1}$.]
Choose an integer $k\gg1$ such that $-k<\sigma_{0}(\varphi)$ and $k>\sigma_{1}(\varphi)$, and define the interpolation parameter $\psi$ by formula \eqref{f2.6} in which $s_{0}:=-k$ and $s_{1}:=k$. According to Theorem~\ref{th2.5}, we have the equality
\begin{equation}\label{f4.2}
H^{\varphi}_{A}(\mathbb{R}^{n})=
\bigl[H^{-k}_{A}(\mathbb{R}^{n}),H^{k}_{A}(\mathbb{R}^{n})\bigr]_{\psi}.
\end{equation}

Note that each space $H^{\pm k}_{A}(\mathbb{R}^{n})$ coincides with $H^{\pm k}(\mathbb{R}^{n})$ up to equivalence of norms. Indeed, $A$ sets an isomorphism between $H^{1}(\mathbb{R}^{n})$ and $L_{2}(\mathbb{R}^{n})$ because $\mathrm{Dom}\,A=H^{1}(\mathbb{R}^{n})$ and $0\notin\mathrm{Spec}\,A$. Besides, since the PsDO $\mathcal{A}$ of the first order is uniformly elliptic on $\mathbb{R}^{n}$, the operator $A$ has the following lifting property: if $u\in H^{1}(\mathbb{R}^{n})$ and if $Au\in H^{s-1}(\mathbb{R}^{n})$ for some $s>1$, then $u\in H^{s}(\mathbb{R}^{n})$ (see \cite[Sections 1.8 and 3.1~b]{Agranovich94}). Hence, $A^{k}$ sets an isomorphism between $H^{k}(\mathbb{R}^{n})$ and $L_{2}(\mathbb{R}^{n})$. Thus, $H^{k}_{A}(\mathbb{R}^{n})=H^{k}(\mathbb{R}^{n})$ up to equivalence of norms. Passing here to dual spaces with respect to $L_{2}(\mathbb{R}^{n})$, we conclude that $H^{-k}_{A}(\mathbb{R}^{n})=H^{-k}(\mathbb{R}^{n})$ up to equivalence of norms (see \cite[Section~9, Subsection~1]{KreinPetunin66}).

Thus, it follows from \eqref{f4.2} that
\begin{equation}\label{f4.3}
H^{\varphi}_{A}(\mathbb{R}^{n})=
\bigl[H^{-k}(\mathbb{R}^{n}),H^{k}(\mathbb{R}^{n})\bigr]_{\psi}
\end{equation}
up to equivalence of norms. Indeed, since the identity mapping sets the isomorphisms
\begin{equation*}
I:H^{\mp k}_{A}(\mathbb{R}^{n})\leftrightarrow H^{\mp k}(\mathbb{R}^{n})
\end{equation*}
and since $\psi$ is an interpolation parameter, the identity mapping realizes the isomorphism
\begin{equation*}
I:\bigl[H^{-k}_{A}(\mathbb{R}^{n}),H^{k}_{A}(\mathbb{R}^{n})\bigr]_{\psi}
\leftrightarrow
\bigl[H^{-k}(\mathbb{R}^{n}),H^{k}(\mathbb{R}^{n})\bigr]_{\psi}.
\end{equation*}
This yields \eqref{f4.3} due to \eqref{f4.2}.

Thus, Theorem~\ref{th4.1} follows directly from \eqref{f4.3} and
\begin{equation}\label{f4.4}
H^{\varphi}(\mathbb{R}^{n})=
\bigl[H^{-k}(\mathbb{R}^{n}),H^{k}(\mathbb{R}^{n})\bigr]_{\psi}.
\end{equation}
The letter equality is proved in \cite[Theorem~2.19]{MikhailetsMurach14}. Besides, \eqref{f4.4} is a special case of \eqref{f4.3} because $H^{\varphi}(\mathbb{R}^{n})=H_{A}^{\varphi}(\mathbb{R}^{n})$ if $A=(1-\Delta)^{1/2}$. Indeed, since the Fourier transform reduces
$A=(1-\Delta)^{1/2}$ to the operator of multiplication by $\langle\xi\rangle$, we conclude that $\mathrm{Dom}\,\varphi(A)\subseteq H^{\varphi}(\mathbb{R}^{n})$ and $\|\varphi(A)u\|_{\mathbb{R}^{n}}=\|u\|_{\varphi,\mathbb{R}^{n}}$ for every $u\in\mathrm{Dom}\,\varphi(A)$. Hence, $H_{A}^{\varphi}(\mathbb{R}^{n})$ is a subspace of $H^{\varphi}(\mathbb{R}^{n})$. But $C^{\infty}_{0}(\mathbb{R}^{n})$ and then $H_{A}^{\varphi}(\mathbb{R}^{n})$ are dense in $H^{\varphi}(\mathbb{R}^{n})$. Thus, $H_{A}^{\varphi}(\mathbb{R}^{n})$ coincides with $H^{\varphi}(\mathbb{R}^{n})$ if $A=(1-\Delta)^{1/2}$.
\end{proof}

Ending this subsection, we note the following: in contrast to the spaces $H^{s}(\mathbb{R}^{n})$, the inner product Sobolev spaces
\begin{equation*}
H^{s}(\Omega):=\bigl\{w\!\upharpoonright\!\Omega:w\in H^{s}(\mathbb{R}^{n})\bigr\},\quad\mbox{with}\quad s\in\mathbb{R},
\end{equation*}
over a domain $\Omega\subset\mathbb{R}^{n}$ do not form a Hilbert scale even if $\Omega$ is a bounded domain with infinitely smooth boundary
and if we restrict ourselves to the spaces of order $s\geq0$ \cite[Corollary~2.3]{Neubauer88}. An explicit description of all Hilbert spaces that are interpolation ones for an arbitrary chosen pair of Sobolev spaces $H^{s_{0}}(\Omega)$ and $H^{s_{1}}(\Omega)$, where $-\infty<s_{0}<s_{1}<\infty$, is given in \cite[Theorem~2.4]{MikhailetsMurach15ResMath1} provided that $\Omega$ is a bounded domain with Lipschitz boundary. These interpolation spaces are (up to equivalence of norms) the generalized Sobolev spaces
\begin{gather*}
H^{\varphi}(\Omega):=\bigl\{v:=w\!\upharpoonright\!\Omega:w\in H^{\varphi}(\mathbb{R}^{n})\bigr\}
\end{gather*}
such that $\varphi$ belongs to $\mathrm{OR}$ and satisfies~\eqref{f2.3}, the Hilbert norm in $H^{\varphi}(\Omega)$ being naturally defined by the formula
\begin{equation*}
\|v\|_{\varphi,\Omega}:=\inf\bigl\{\|w\|_{\varphi,\mathbb{R}^{n}}:
w\in H^{\varphi}(\mathbb{R}^{n}),v=w\!\upharpoonright\!\Omega\bigr\}.
\end{equation*}

\subsection{}\label{sec4.2}
Here, we consider the extended Hilbert scale generated by an elliptic operator given on a closed manifold. Let $\Gamma$ be an arbitrary closed (i.e. compact and boundaryless) infinitely smooth
manifold of dimension $n\geq1$. We suppose that a certain positive $C^{\infty}$-density $dx$ is given on~$\Gamma$. We put $H:=L_{2}(\Gamma)$, where $L_{2}(\Gamma)$ is the complex Hilbert space of all square integrable functions over $\Gamma$ with respect to the measure induced by this density. Let $(\cdot,\cdot)_{\Gamma}$ and $\|\cdot\|_{\Gamma}$ stand respectively for the inner product and norm in $L_{2}(\Gamma)$.

Following \cite[Section 2.1]{Agranovich94}, we let $\Psi^{m}(\Gamma)$, where $m\in\mathbb{R}$, denote the class of all PsDOs on $\Gamma$ whose
representations in every local chart on $\Gamma$ belong to $\Psi^{m}(\mathbb{R}^{n})$. If $\mathcal{A}\in\Psi^{m}(\Gamma)$, we say that the (formal) order of $\mathcal{A}$ is $m$. A~PsDO $\mathcal{A}\in\Psi^{m}(\Gamma)$ is called elliptic on $\Gamma$ if for every point $x_{0}\in\Gamma$ there exist positive numbers $c_{1}$ and $c_{2}$ such that $|a_{x_{0}}(x,\xi)|\geq c_{1}|\xi|^{m}$ whenever $x\in U(x_{0})$ and $\xi\in\mathbb{R}^{n}$ and $|\xi|\geq\nobreak c_{2}$, with $a_{x_{0}}(x,\xi)$ be the local symbol of $\mathcal{A}$ corresponding to a certain coordinate neighbourhood $U(x_{0})$ of $x_{0}$ (see \cite[Section 3.1 b]{Agranovich94}).

We suppose in this subsection that
\begin{itemize}
\item[(a)] $\mathcal{A}$ is a PsDO of  class $\Psi^{1}(\Gamma)$;
\item[(b)] $\mathcal{A}$ is elliptic on $\Gamma$;
\item[(c)] the inequality $(\mathcal{A}f,f)_{\Gamma}\geq\|f\|_{\Gamma}^{2}$ holds true for every $f\in C^{\infty}(\Gamma)$.
\end{itemize}

Let $A$ denote the closure, in $H=L_{2}(\Gamma)$, of the linear operator $f\mapsto\mathcal{A}f$, with $f\in\nobreak C^{\infty}(\Gamma)$.
Note that this operator is closable in $L_{2}(\Gamma)$ because the PsDO $\mathcal{A}\in\nobreak\Psi^{1}(\Gamma)$ acts continuously from $L_{2}(\Gamma)$ to $H^{-1}(\Gamma)$ \cite[Theorem 2.1.2]{Agranovich94}. Here and below, $H^{s}(\Gamma)$ stands for the inner product Sobolev space of order $s\in\mathbb{R}$ over $\Gamma$. It follows from (a)--(c) that the operator $A$ is positive definite and self-adjoint in $L_{2}(\Gamma)$, with $\mathrm{Dom}\,A=H^{1}(\Gamma)$ and $\mathrm{Spec}\,A\subseteq[1,\infty)$
(see \cite[Sections 2.3 c, d and 3.1~b]{Agranovich94}).

Thus, $A$ is the operator considered in Section~\ref{sec2}, and the separable Hilbert space $H_{A}^{\varphi}$ is defined for every Borel measurable function $\varphi:\nobreak[1,\infty)\to(0,\infty)$. We denote this space by $H_{A}^{\varphi}(\Gamma)$. An important example of $A$ is the operator $(1-\Delta_{\Gamma})^{1/2}$, where $\Gamma$ is endowed with a Riemann metric (then the density $dx$ is induced by this metric).

If $\varphi(t)\equiv t^{s}$ for some $s\in\mathbb{R}$, then the space $H_{A}^{s}(\Gamma):=H_{A}^{\varphi}(\Gamma)$ coincides with the Sobolev space $H^{s}(\Gamma)$ up to equivalence norms \cite[Corollary 5.3.2]{Agranovich94}. Thus, the $A$-scale $\{H_{A}^{s}(\Gamma):s\in\nobreak\mathbb{R}\}$ is the Sobolev Hilbert scale over~$\Gamma$. We will prove that the extended $A$-scale consists of the generalized Sobolev spaces $H^{\varphi}(\Gamma)$ with $\varphi\in\mathrm{OR}$. Let us give their definition with the help of local charts on~$\Gamma$.

We arbitrarily choose a finite atlas from the $C^{\infty}$-structure on $\Gamma$; let this atlas be formed by $\varkappa$ local charts $\pi_j: \mathbb{R}^{n}\leftrightarrow \Gamma_{j}$, with $j=1,\ldots,\varkappa$.
Here, the open sets $\Gamma_{1},\ldots,\Gamma_{\varkappa}$ form a covering of $\Gamma$. We also arbitrarily choose functions $\chi_j\in C^{\infty}(\Gamma)$, with $j=1,\ldots,\varkappa$, that form a partition of unity on $\Gamma$ such that $\mathrm{supp}\,\chi_j\subset \Gamma_j$.

Let $\varphi\in\mathrm{OR}$. By definition, the linear space $H^{\varphi}(\Gamma)$ is the completion of the linear manifold $C^{\infty}(\Gamma)$ with respect to the inner product
\begin{equation}\label{f4.5}
(f_{1},f_{2})_{\varphi,\Gamma}:=
\sum_{j=1}^{\varkappa}\,((\chi_{j}f_{1})\circ\pi_{j},
(\chi_{j}f_{2})\circ\pi_{j})_{\varphi,\mathbb{R}^{n}}
\end{equation}
of functions $f_{1},f_{2}\in C^{\infty}(\Gamma)$. Thus, $H^{\varphi}(\Gamma)$ is a Hilbert space. Let $\|\cdot\|_{\varphi,\Gamma}$ denote the norm induced by the inner product \eqref{f4.5}. If $\varphi(t)\equiv t^{s}$ for certain $s\in\mathbb{R}$, then $H^{\varphi}(\Gamma)$ becomes the Sobolev space $H^{s}(\Gamma)$.

\begin{theorem}\label{th4.3}
Let $\varphi\in\mathrm{OR}$. Then the spaces $H^{\varphi}_{A}(\Gamma)$ and $H^{\varphi}(\Gamma)$ coincide as completions of $C^{\infty}(\Gamma)$ with respect to equivalent norms.
\end{theorem}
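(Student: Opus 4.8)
The plan is to imitate the proof of Theorem~\ref{th4.1}, passing from the abstract $A$-scale to Sobolev spaces on $\Gamma$ by quadratic interpolation and then invoking the corresponding interpolation formula for the spaces $H^{\varphi}(\Gamma)$. First I would fix an integer $k\gg1$ with $-k<\sigma_{0}(\varphi)$ and $k>\sigma_{1}(\varphi)$ and define the interpolation parameter $\psi$ by formula~\eqref{f2.6} with $s_{0}:=-k$ and $s_{1}:=k$; then $\psi\in\mathcal{B}$ is an interpolation parameter and, by Theorem~\ref{th2.5},
\begin{equation*}
H^{\varphi}_{A}(\Gamma)=\bigl[H^{-k}_{A}(\Gamma),H^{k}_{A}(\Gamma)\bigr]_{\psi}
\end{equation*}
with equality of norms. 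Since $H^{\pm k}_{A}(\Gamma)=H^{\pm k}(\Gamma)$ up to equivalence of norms (the case $\varphi(t)\equiv t^{\pm k}$ recalled above, \cite[Corollary~5.3.2]{Agranovich94}) and $\psi$ is an interpolation parameter, the identity isomorphisms $H^{\mp k}_{A}(\Gamma)\leftrightarrow H^{\mp k}(\Gamma)$ interpolate to an isomorphism between $[H^{-k}_{A}(\Gamma),H^{k}_{A}(\Gamma)]_{\psi}$ and $[H^{-k}(\Gamma),H^{k}(\Gamma)]_{\psi}$, which gives
\begin{equation*}
H^{\varphi}_{A}(\Gamma)=\bigl[H^{-k}(\Gamma),H^{k}(\Gamma)\bigr]_{\psi}
\end{equation*}
up to equivalence of norms.

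It then remains to establish the interpolation formula $[H^{-k}(\Gamma),H^{k}(\Gamma)]_{\psi}=H^{\varphi}(\Gamma)$ up to equivalence of norms, which I would reduce to its Euclidean counterpart~\eqref{f4.4} (contained in Theorem~\ref{th4.1}) by a retraction argument adapted to the fixed atlas $\{\pi_{j}\}_{j=1}^{\varkappa}$ and partition of unity $\{\chi_{j}\}_{j=1}^{\varkappa}$. The localization mapping $f\mapsto\bigl((\chi_{j}f)\circ\pi_{j}\bigr)_{j=1}^{\varkappa}$ is bounded, $H^{s}(\Gamma)\to\bigl(H^{s}(\mathbb{R}^{n})\bigr)^{\varkappa}$, for every $s\in\mathbb{R}$, and, choosing $\zeta_{j}\in C^{\infty}(\Gamma)$ with $\mathrm{supp}\,\zeta_{j}\subset\Gamma_{j}$ and $\zeta_{j}=1$ on $\mathrm{supp}\,\chi_{j}$, the gluing mapping $(w_{j})_{j=1}^{\varkappa}\mapsto\sum_{j}\zeta_{j}\cdot\bigl(w_{j}\circ\pi_{j}^{-1}\bigr)$ is a bounded left inverse of it, $\bigl(H^{s}(\mathbb{R}^{n})\bigr)^{\varkappa}\to H^{s}(\Gamma)$; the same two mappings are bounded between $H^{\varphi}(\Gamma)$ and $\bigl(H^{\varphi}(\mathbb{R}^{n})\bigr)^{\varkappa}$ because diffeomorphic changes of variables and multiplications by $C^{\infty}$ functions (order-zero PsDOs) act continuously on $H^{\varphi}(\mathbb{R}^{n})$ for $\varphi\in\mathrm{OR}$. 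Since quadratic interpolation with parameter $\psi$ commutes with finite direct sums and is inherited by retraction--coretraction pairs, the Euclidean identity $[H^{-k}(\mathbb{R}^{n}),H^{k}(\mathbb{R}^{n})]_{\psi}=H^{\varphi}(\mathbb{R}^{n})$ transfers to $\Gamma$ and yields the required formula. Alternatively, this interpolation formula for Sobolev spaces on a closed manifold may be quoted directly from \cite{MikhailetsMurach14}.

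Finally, I would record that both spaces are completions of $C^{\infty}(\Gamma)$: for $H^{\varphi}(\Gamma)$ this is its definition, while for $H^{\varphi}_{A}(\Gamma)$ it follows from the representation $H^{\varphi}_{A}(\Gamma)=[H^{-k}(\Gamma),H^{k}(\Gamma)]_{\psi}$ and the density of $C^{\infty}(\Gamma)$ in $H^{\pm k}(\Gamma)$, hence in the interpolation space (equivalently, $C^{\infty}(\Gamma)=H^{\infty}_{A}$ lies densely in $\mathrm{Dom}\,\varphi(A)$). On $C^{\infty}(\Gamma)$ the $H^{\varphi}_{A}(\Gamma)$-norm and the $H^{\varphi}(\Gamma)$-norm are equivalent by the chain of identifications above, so the two spaces coincide as completions of $C^{\infty}(\Gamma)$ with respect to equivalent norms. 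I expect the transfer step in the second paragraph to be the main obstacle: verifying that the gluing operator is a bounded left inverse of localization \emph{simultaneously} on all the relevant spaces and that order-zero PsDOs act boundedly on the generalized Sobolev spaces $H^{\varphi}(\mathbb{R}^{n})$, $\varphi\in\mathrm{OR}$; equivalently, the interpolation formula $[H^{s_{0}}(\Gamma),H^{s_{1}}(\Gamma)]_{\psi}=H^{\varphi}(\Gamma)$ is, as in the Euclidean case, the substantive part of the proof.
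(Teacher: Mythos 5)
Your proposal is correct and follows essentially the same route as the paper: reduce to $H^{\varphi}_{A}(\Gamma)=[H^{-k}(\Gamma),H^{k}(\Gamma)]_{\psi}$ via Theorem~\ref{th2.5} and the identification $H^{\pm k}_{A}(\Gamma)=H^{\pm k}(\Gamma)$, then prove $[H^{-k}(\Gamma),H^{k}(\Gamma)]_{\psi}=H^{\varphi}(\Gamma)$ by the flattening/sewing (retraction--coretraction) pair, invoking the Euclidean formula \eqref{f4.4} together with the boundedness of smooth multipliers and changes of variables on $H^{\varphi}(\mathbb{R}^{n})$. Your gluing operator differs from the paper's only in placing the cutoff on the manifold side rather than the Euclidean side, which is an immaterial variant.
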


Note that the norm of $f\in C^{\infty}(\Gamma)$ in $H^{\varphi}_{A}(\Gamma)$ is $\|\varphi(A)f\|_{\Gamma}$ because
\begin{equation*}
\mathrm{Dom}\,\varphi(A)\supset\mathrm{Dom}\,A^{s_1}
\supset C^{\infty}(\Gamma);
\end{equation*}
here $s_1$ is a positive integer that satisfies \eqref{f2.3}.

Theorem \ref{th4.3} specifically entails

\begin{corollary}\label{cor4.4}
Let $\varphi\in\mathrm{OR}$. Then the space $H^{\varphi}_{A}(\Gamma)$ does not depend on~$A$ up to equivalence of norms. Besides, the space $H^{\varphi}(\Gamma)$ does not depend (up to equivalence of norms) on our choice of the atlas and partition of unity on $\Gamma$.
\end{corollary}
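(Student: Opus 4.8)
The plan is to derive both assertions directly from Theorem~\ref{th4.3}, so that essentially no new analysis is needed once that theorem is available. I would first record that an operator $A$ meeting the standing assumptions (a)--(c) of Subsection~\ref{sec4.2} exists on any closed manifold $\Gamma$: one may take $A:=(1-\Delta_{\Gamma})^{1/2}$ for any Riemann metric on $\Gamma$, since $1-\Delta_{\Gamma}\geq 1$ forces $(1-\Delta_{\Gamma})^{1/2}\geq 1$ and hence condition (c), while (a) and (b) are classical. The structural point to emphasize is that $H^{\varphi}_{A}(\Gamma)$ is built from $A$ alone, with no reference to a chart system, whereas $H^{\varphi}(\Gamma)$ is built from a finite atlas together with a subordinate partition of unity, with no reference to $A$; and Theorem~\ref{th4.3} asserts that, for $\varphi\in\mathrm{OR}$, these two completions of $C^{\infty}(\Gamma)$ coincide with equivalence of norms. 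Both halves of the corollary then follow by transitivity of ``equivalence of norms'' on the common dense set $C^{\infty}(\Gamma)$.

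For the first assertion, let $A_{1}$ and $A_{2}$ be two operators on $\Gamma$ each satisfying (a)--(c), and fix one atlas and one subordinate partition of unity, yielding a concrete space $H^{\varphi}(\Gamma)$. Applying Theorem~\ref{th4.3} to $A_{1}$ and to $A_{2}$ gives, in each case, that the identity map on $C^{\infty}(\Gamma)$ extends to an isomorphism of Hilbert spaces $H^{\varphi}_{A_{i}}(\Gamma)\leftrightarrow H^{\varphi}(\Gamma)$ for $i\in\{1,2\}$. Composing these two isomorphisms, we obtain that the identity on $C^{\infty}(\Gamma)$ extends to an isomorphism $H^{\varphi}_{A_{1}}(\Gamma)\leftrightarrow H^{\varphi}_{A_{2}}(\Gamma)$, i.e. the two spaces coincide up to equivalence of norms. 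Here one needs nothing beyond the fact that the norms in question are pairwise equivalent on $C^{\infty}(\Gamma)$ and that each space is the completion of $C^{\infty}(\Gamma)$ in its norm.

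The second assertion is handled symmetrically. Fix one admissible operator $A$, and let $H^{\varphi}_{\mathcal{P}}(\Gamma)$ and $H^{\varphi}_{\mathcal{P}'}(\Gamma)$ denote the spaces produced by two different choices $\mathcal{P}$ and $\mathcal{P}'$ of finite atlas together with a subordinate partition of unity. Theorem~\ref{th4.3}, applied first with the data $\mathcal{P}$ and then with the data $\mathcal{P}'$, shows that each of $H^{\varphi}_{\mathcal{P}}(\Gamma)$ and $H^{\varphi}_{\mathcal{P}'}(\Gamma)$ coincides, up to equivalence of norms, with the chart-free space $H^{\varphi}_{A}(\Gamma)$; hence they coincide with one another up to equivalence of norms. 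I do not expect a genuine obstacle at the level of the corollary itself: all the real work is inside Theorem~\ref{th4.3}, whose proof (presumably via the interpolation formula of Theorem~\ref{th2.5}, the identification $H^{\pm k}_{A}(\Gamma)=H^{\pm k}(\Gamma)$ coming from elliptic regularity, and the corresponding statement over $\mathbb{R}^{n}$ pulled through local charts, in parallel with the proof of Theorem~\ref{th4.1}) is where the $\mathrm{OR}$-variation of $\varphi$ and the ellipticity of $\mathcal{A}$ are actually exploited. A self-contained alternative for the second assertion would bypass $A$ altogether and instead verify directly that transition diffeomorphisms between local charts and multiplication by $C^{\infty}$ cut-off functions act boundedly on $H^{\varphi}(\mathbb{R}^{n})$ for every $\varphi\in\mathrm{OR}$, but routing the argument through Theorem~\ref{th4.3} is considerably shorter and is the approach I would present.
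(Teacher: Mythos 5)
Your proposal is correct and matches the paper's intent exactly: the paper offers no separate argument for Corollary~\ref{cor4.4} beyond the remark that Theorem~\ref{th4.3} (valid for an arbitrary admissible $A$ and an arbitrary atlas with subordinate partition of unity) ``specifically entails'' it, which is precisely the transitivity-of-norm-equivalence argument you spell out. Your added observation that an admissible $A$ exists (e.g. $(1-\Delta_{\Gamma})^{1/2}$), so the second assertion's route through $H^{\varphi}_{A}(\Gamma)$ is non-vacuous, is a sensible detail the paper leaves implicit.
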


Owing to Theorems \ref{th2.1} and \ref{th4.3}, we obtain the following explicit description of the extended Hilbert scale generated by the considered PsDO $A$ on $\Gamma$:

\begin{corollary}\label{cor4.5}
The extended $A$-scale consists (up to equivalence if norms) of all the spaces $H^{\varphi}(\Gamma)$ with $\varphi\in\mathrm{OR}$.
\end{corollary}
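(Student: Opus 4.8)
The plan is to obtain this corollary directly by combining the abstract description of the extended $A$-scale furnished by Theorem~\ref{th2.1} with the concrete identification of the spaces $H^{\varphi}_{A}(\Gamma)$ carried out in Theorem~\ref{th4.3}. First I would recall that the operator $A$ constructed in this subsection --- the closure in $L_{2}(\Gamma)$ of the positive definite elliptic PsDO of class $\Psi^{1}(\Gamma)$ satisfying (a)--(c) --- is, as already verified above, a positive definite self-adjoint unbounded linear operator in $L_{2}(\Gamma)$ with $\mathrm{Spec}\,A\subseteq[1,\infty)$ and lower bound $r=1$. Thus $A$ is precisely an operator of the kind treated in Section~\ref{sec2}, so Theorem~\ref{th2.1} applies to it verbatim: a Hilbert space $X$ belongs to the extended $A$-scale if and only if $X=H^{\varphi}_{A}(\Gamma)$ up to equivalence of norms for some $\varphi\in\mathrm{OR}$.

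Next I would invoke Theorem~\ref{th4.3}, which asserts that for every $\varphi\in\mathrm{OR}$ the spaces $H^{\varphi}_{A}(\Gamma)$ and $H^{\varphi}(\Gamma)$ coincide --- as completions of $C^{\infty}(\Gamma)$ --- up to equivalence of norms. Since the relation ``coincides up to equivalence of norms'' is transitive, the two equivalences chain together, and one concludes that $X$ belongs to the extended $A$-scale if and only if $X=H^{\varphi}(\Gamma)$ up to equivalence of norms for a certain $\varphi\in\mathrm{OR}$. This is exactly the statement of the corollary, so the proof consists simply of recording this chain of implications.

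There is essentially no obstacle here: the corollary is a formal consequence of two results already established. The only point deserving a word of care is that Theorem~\ref{th2.1} is phrased for the abstract $A$-scale, so one should state explicitly that the present $A$ satisfies its hypotheses --- which was checked in the paragraph preceding the definition of $H^{\varphi}_{A}(\Gamma)$ in this subsection. Optionally one may add, via Remark~\ref{rem2.2}, the complementary observation that the class $\{H^{\varphi}(\Gamma):\varphi\in\mathrm{OR}\}$ is itself closed under passing to interpolation Hilbert spaces between any two of its members.
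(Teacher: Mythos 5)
Your proposal is correct and follows exactly the paper's route: the paper derives Corollary~\ref{cor4.5} by combining Theorem~\ref{th2.1} (the abstract description of the extended $A$-scale via the spaces $H^{\varphi}_{A}$ with $\varphi\in\mathrm{OR}$) with Theorem~\ref{th4.3} (the identification $H^{\varphi}_{A}(\Gamma)=H^{\varphi}(\Gamma)$ up to equivalence of norms). Your added remarks on verifying the hypotheses on $A$ and on transitivity of norm equivalence are sound and change nothing essential.
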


Thus, the class $\{H^{\varphi}(\Gamma):\varphi\in\mathrm{OR}\}$ consists (up to equivalence if norms) of all Hilbert spaces each of which is an interpolation space between some Sobolev inner-product spaces $H^{s_0}(\Gamma)$ and $H^{s_1}(\Gamma)$ with $s_0<s_1$.
This class is called the extended Sobolev scale over~$\Gamma$.

\begin{theorem}\label{th4.6}
Suppose that the manifold $\Gamma$ is endowed with a Riemann metric, and let $\varphi\in\mathrm{OR}$. Then the space $H^{\varphi}(\Gamma)$ admits the following three equivalent definitions in the sense that they introduce the same Hilbert space up to equivalence of norms:
\begin{itemize}
\item[(i)] \textbf{Operational definition.} The Hilbert space $H^{\varphi}(\Gamma)$ is the completion of $C^{\infty}(\Gamma)$ with respect to the norm $\|\varphi((1-\Delta_{\Gamma})^{1/2})f\|_{\Gamma}$ of $f\in C^{\infty}(\Gamma)$.
\item[(ii)] \textbf{Local definition.} The Hilbert space $H^{\varphi}(\Gamma)$ consists of all distributions $f\in\mathcal{D}'(\Gamma)$ such that $(\chi_{j}f)\circ\pi_{j}\in
H^{\varphi}(\mathbb{R}^{n})$ for every $j\in\{1,\ldots,\varkappa\}$ and is endowed with the inner product \eqref{f4.5} of distributions $f_{1},f_{2}\in H^{\varphi}(\Gamma)$.
\item[(iii)] \textbf{Interpolational definition.} Let integers $s_{0}$ and $s_{1}$ satisfy the conditions $s_{0}<\sigma_{0}(\varphi)$ and $s_{1}>\sigma_{1}(\varphi)$, and let $\psi$ be the interpolation parameter defined by \eqref{f2.6}. Then
\begin{equation*}
H^{\varphi}(\Gamma):=
\bigl[H^{s_{0}}(\Gamma),H^{s_{1}}(\Gamma)\bigr]_{\psi}.
\end{equation*}
\end{itemize}
\end{theorem}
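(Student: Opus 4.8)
Throughout, set $A:=(1-\Delta_{\Gamma})^{1/2}$. The plan is to show that $A$ is an admissible operator of the type treated in Subsection~\ref{sec4.2} and then to read off all three equivalences from Theorems~\ref{th4.3} and~\ref{th2.5}. First I would check admissibility: $A\in\Psi^{1}(\Gamma)$ and is elliptic on $\Gamma$ (standard facts about $(1-\Delta_{\Gamma})^{1/2}$ on a closed manifold), and $(Af,f)_{\Gamma}\geq\|f\|_{\Gamma}^{2}$ for every $f\in C^{\infty}(\Gamma)$ because $\mathrm{Spec}(1-\Delta_{\Gamma})\subseteq[1,\infty)$. Since $C^{\infty}(\Gamma)$ is a core for $A$, the operator built from $\mathcal{A}:=A$ by the construction of Subsection~\ref{sec4.2} is $A$ itself, with $\mathrm{Dom}\,A=H^{1}(\Gamma)$ and $\mathrm{Spec}\,A\subseteq[1,\infty)$. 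In particular the operational space in (i) is, by definition, $H^{\varphi}_{A}(\Gamma)$, its norm being $\|\varphi(A)f\|_{\Gamma}$ on $C^{\infty}(\Gamma)$, as recorded right after the statement of the theorem.

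Next I would dispose of (i)$\Leftrightarrow$(ii). The local space in (ii) coincides with the space $H^{\varphi}(\Gamma)$ defined via the inner product \eqref{f4.5}: one identifies the completion of $C^{\infty}(\Gamma)$ in the \eqref{f4.5}-norm with the space of distributions $f\in\mathcal{D}'(\Gamma)$ whose localizations $(\chi_{j}f)\circ\pi_{j}$ lie in $H^{\varphi}(\mathbb{R}^{n})$, using only completeness of $H^{\varphi}(\mathbb{R}^{n})$, density of $C^{\infty}_{0}(\mathbb{R}^{n})$ in $H^{\varphi}(\mathbb{R}^{n})$, and the partition of unity $\{\chi_{j}\}$. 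Theorem~\ref{th4.3}, applied to the admissible operator $A$, then says precisely that $H^{\varphi}_{A}(\Gamma)$ and this $H^{\varphi}(\Gamma)$ coincide as completions of $C^{\infty}(\Gamma)$ with respect to equivalent norms, which is (i)$\Leftrightarrow$(ii).

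Finally I would treat (iii). Let integers $s_{0}<\sigma_{0}(\varphi)$ and $s_{1}>\sigma_{1}(\varphi)$ be as in (iii); then $s_{0}<s_{1}$ since $\sigma_{0}(\varphi)\leq\sigma_{1}(\varphi)$, and, by the remark following Theorem~\ref{th2.5}, $s_{0}$ and $s_{1}$ satisfy \eqref{f2.3}, so the function $\psi$ from \eqref{f2.6} is an interpolation parameter. Theorem~\ref{th2.5} applied to this $A$ gives
\begin{equation*}
\bigl[H^{s_{0}}_{A}(\Gamma),H^{s_{1}}_{A}(\Gamma)\bigr]_{\psi}=H^{\varphi}_{A}(\Gamma)\quad\mbox{with equality of norms}.
\end{equation*}
For an integer $s$ the space $H^{s}_{A}(\Gamma)$ coincides with $H^{s}(\Gamma)$ up to equivalence of norms (the case $\varphi(t)\equiv t^{s}$ recorded in Subsection~\ref{sec4.2}); since $\psi$ is an interpolation parameter, the identity isomorphisms $H^{s_{j}}_{A}(\Gamma)\leftrightarrow H^{s_{j}}(\Gamma)$ upgrade to an isomorphism $[H^{s_{0}}_{A}(\Gamma),H^{s_{1}}_{A}(\Gamma)]_{\psi}\leftrightarrow[H^{s_{0}}(\Gamma),H^{s_{1}}(\Gamma)]_{\psi}$. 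Hence $[H^{s_{0}}(\Gamma),H^{s_{1}}(\Gamma)]_{\psi}=H^{\varphi}_{A}(\Gamma)$ up to equivalence of norms, i.e. (iii)$\Leftrightarrow$(i), and combining with the previous step proves the theorem.

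The genuinely technical points are minor and external: that $(1-\Delta_{\Gamma})^{1/2}$ is a classical first-order PsDO with $C^{\infty}(\Gamma)$ as a core (so that the abstract construction of Subsection~\ref{sec4.2} reproduces it exactly), and the routine distribution-theoretic identification passing from the ``completion'' description of the local space to the ``space of distributions'' description in (ii). Neither is a real obstacle; the substance of the proof is already packed into Theorems~\ref{th4.3} and~\ref{th2.5}, so the only care needed is bookkeeping of which operator secretly underlies each of the three definitions and verifying that $\psi$ in (iii) is a legitimate interpolation parameter.
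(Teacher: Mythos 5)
Your proposal is correct and follows essentially the same route as the paper: identify (i) with $H^{\varphi}_{A}(\Gamma)$ for $A=(1-\Delta_{\Gamma})^{1/2}$ via Theorem~\ref{th4.3}, get (iii) from Theorem~\ref{th2.5} together with $H^{\pm s_j}_{A}(\Gamma)=H^{\pm s_j}(\Gamma)$ and the interpolation of the identity isomorphisms, and reduce (ii) to a density statement. The only point you compress is the one the paper actually spends its proof on: showing that $C^{\infty}(\Gamma)$ is dense in the space of distributions defined by (ii) requires not just completeness of $H^{\varphi}(\mathbb{R}^{n})$ and density of $C^{\infty}_{0}(\mathbb{R}^{n})$ there, but also the boundedness of the sewing operator $K$ on $(H^{\varphi}(\mathbb{R}^{n}))^{\varkappa}$ (estimate \eqref{f4.17}, i.e.\ boundedness of multiplication by cut-offs and of diffeomorphic changes of variables on $H^{\varphi}(\mathbb{R}^{n})$) together with $KTf=f$, all of which are supplied by the proof of Theorem~\ref{th4.3} that you invoke.
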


Here, as usual, $\mathcal{D}'(\Gamma)$ is the linear topological space of all distributions on $\Gamma$, and $(\chi_{j}f)\circ\pi_{j}$ stands for the representation of the distribution $\chi_{j}f\in\mathcal{D}'(\Gamma)$ in the local chart $\pi_{j}$. We naturally interpret $\mathcal{D}'(\Gamma)$ as the dual of $C^{\infty}(\Gamma)$ with respect to the extension of the inner product in $L_{2}(\Gamma)$. This extension is denoted by $(\cdot,\cdot)_{\Gamma}$ as well. Of course, the $C^{\infty}$-density $dx$ is now induced by the Riemann metric.

\begin{remark}\label{rem4.7}
It follows directly from Theorem~\ref{th4.3} that in the operational definition we may change $(1-\Delta_{\Gamma})^{1/2}$ for the more general PsDO $A$ considered in this subsection.
\end{remark}

Let us prove Theorems \ref{th4.3} and \ref{th4.6}.

\begin{proof}[Proof of Theorem $\ref{th4.3}$.]
Let the integer $k\gg1$ and the interpolation parameter $\psi$ be the same as those in the proof of Theorem~$\ref{th4.1}$. Then
\begin{equation}\label{f4.7}
H^{\varphi}_{A}(\Gamma)=
\bigl[H^{-k}_{A}(\Gamma),H^{k}_{A}(\Gamma)\bigr]_{\psi}
\end{equation}
due to Theorem~\ref{th2.5}. Here, $H^{\pm k}_{A}(\Gamma)=H^{\pm k}(\Gamma)$ up to equivalence of norms, which is demonstrated in the same way as that in the proof of Theorem~$\ref{th4.1}$. Therefore, \eqref{f4.7} implies that
\begin{equation}\label{f4.8}
H^{\varphi}_{A}(\Gamma)=
\bigl[H^{-k}(\Gamma),H^{k}(\Gamma)\bigr]_{\psi}
\end{equation}
up to equivalence of norms. Hence, we have the dense continuous embedding $H^{k}(\Gamma)\hookrightarrow H^{\varphi}_{A}(\Gamma)$, which entails the density of $C^{\infty}(\Gamma)$ in $H^{\varphi}_{A}(\Gamma)$.

Owing to \eqref{f4.8}, it remains to show that
\begin{equation}\label{f4.9}
\bigl[H^{-k}(\Gamma),H^{k}(\Gamma)\bigr]_{\psi}=H^{\varphi}(\Gamma)
\end{equation}
up to equivalence of norms. We will deduce this formula from \eqref{f4.4}
with the help of certain operators of flattening and sewing of the manifold $\Gamma$.

Let us define the flattening operator by the formula
\begin{equation}\label{f4.10}
T:f\mapsto ((\chi_1 f)\circ\pi_1,\ldots,
(\chi_{\varkappa}f)\circ\pi_{\varkappa})
\;\;\mbox{for every}\;\;f\in C^{\infty}(\Gamma).
\end{equation}
The mapping \eqref{f4.10} extends by continuity to isometric linear operators
\begin{equation}\label{f4.11}
T:H^{\varphi}(\Gamma)\rightarrow (H^{\varphi}(\mathbb{R}^n))^{\varkappa}
\end{equation}
and
\begin{equation}\label{f4.12}
T:H^{\mp k}(\Gamma)\rightarrow (H^{\mp k}(\mathbb{R}^n))^{\varkappa}.
\end{equation}
Since $\psi$ is an interpolation parameter, it follows from the boundedness of the operators \eqref{f4.12} that a restriction of the first operator acts continuously
\begin{equation}\label{f4.13}
T:\bigl[H^{-k}(\Gamma),H^{k}(\Gamma)\bigr]_{\psi}\to
\bigl[(H^{-k}(\mathbb{R}^n))^{\varkappa},
(H^{k}(\mathbb{R}^n))^{\varkappa}\bigr]_{\psi}.
\end{equation}
Here, the target space equals $(H^{\varphi}(\mathbb{R}^n))^{\varkappa}$ due to \eqref{f4.4} and the definition of the interpolation with the parameter $\psi$. Thus, the operator \eqref{f4.13} acts continuously
\begin{equation}\label{f4.14}
T:\bigl[H^{-k}(\Gamma),H^{k}(\Gamma)\bigr]_{\psi}\to
\bigl(H^{\varphi}(\mathbb{R}^n)\bigr)^{\varkappa}.
\end{equation}

We define the sewing operator by the formula
\begin{equation}\label{f4.15}
\begin{gathered}
K:\mathbf{w}\mapsto\sum_{j=1}^{\varkappa}
\Theta_j\bigl((\eta_j w_j)\circ\pi_j^{-1}\bigr)\\
\mbox{for every}\quad\mathbf{w}:=(w_{1},\ldots, w_{\varkappa})\in \bigl(C^{\infty}_{0}(\mathbb{R}^n)\bigr)^{\varkappa}.
\end{gathered}
\end{equation}
Here, for each $j\in\{1,\ldots,\varkappa\}$, the function $\eta_j \in C^\infty_0(\mathbb{R}^n)$ is chosen  such that $\eta_j=\nobreak1$ in a neighbourhood of $\pi^{-1}_j(\mathrm{supp}\,\chi_j)$.
Besides, for every function $\omega:\Gamma_{j}\to\mathbb{C}$, we put $(\Theta_j\omega)(x):=\omega(x)$ whenever $x\in\Gamma_{j}$ and put
$(\Theta_j\omega)(x):=0$ whenever $x\in\Gamma\setminus\Gamma_{j}$. Thus,
$K\mathbf{w}\in C^{\infty}(\Gamma)$ for every $\mathbf{w}\in (C^{\infty}_{0}(\mathbb{R}^n))^{\varkappa}$.

The mapping $K$ is left inverse to the flattening operator \eqref{f4.10}. Indeed, given $f\in C^{\infty}(\Gamma)$, we have the following equalities:
\begin{align*}
KTf=&\sum_{j=1}^\varkappa\Theta_j\Bigl(\bigl(\eta_j ((\chi_j f)\circ\pi_j)\bigr)\circ\pi_j^{-1}\Bigr)\\
=&\sum_{j=1}^\varkappa
\Theta_j\bigl((\eta_j\circ\pi_j^{-1})(\chi_j f)\bigr)
=\sum_{j=1}^\varkappa \Theta_j (\chi_j f) =
\sum_{j=1}^\varkappa \chi_j f = f.
\end{align*}
Thus,
\begin{equation}\label{f4.16}
KTf=f\quad\mbox{for every}\quad f\in C^{\infty}(\Gamma).
\end{equation}

There exists a number $c>0$ such that
\begin{equation}\label{f4.17}
\|K\mathbf{w}\|_{\varphi,\Gamma}^{2}\leq c  \sum_{l=1}^{\varkappa}\|w_l\|_{\varphi,\mathbb{R}^n}^{2}
\quad\mbox{whenever}\quad\mathbf{w}\in \bigl(C^{\infty}_{0}(\mathbb{R}^n)\bigr)^{\varkappa}.
\end{equation}
Indeed,
\begin{align*}
\| K\mathbf{w} \|_{\varphi,\Gamma}^{2}&=
\sum_{j=1}^{\varkappa}\|(\chi_jK\mathbf{w})\circ\pi_j\|_
{\varphi,\mathbb{R}^n}^2 \\
&= \sum_{j=1}^{\varkappa}\,\Bigl\|\sum_{l=1}^{\varkappa} \bigl(\chi_j\Theta_l((\eta_lw_l)\circ\pi_l^{-1})\bigr)
\circ\pi_j\Bigr\|_{\varphi,\mathbb{R}^n}^2\\
&= \sum_{j=1}^{\varkappa}\,\Bigl\|\sum_{l=1}^{\varkappa} (\eta_{l,j} w_l)\circ\beta_{l,j} \Bigr\|_{\varphi,\mathbb{R}^n}^2\leq
c \sum_{l=1}^{\varkappa}\|w_l\|_{\varphi,\mathbb{R}^n}^2.
\end{align*}
Here, $\eta_{l,j}:=(\chi_j\circ\pi_l)\eta_l\in C^{\infty}_{0}(\mathbb{R}^n)$,
and $\beta_{l,j}:\mathbb{R}^n\rightarrow\mathbb{R}^n$ is a $C^\infty$-diffeomorphism such that $\beta_{l,j}=\pi_l^{-1}\circ\pi_j$ in a neighbourhood of $\mathrm{supp}\,\eta_{l,j}$ and that $\beta_{l,j}(t)= t$ whenever $|t|$ is sufficiently large. The last inequality is a consequence of the fact that the operator of the multiplication by a function from  $C^{\infty}_{0}(\mathbb{R}^n)$ and the operator $v\mapsto v\circ\beta_{l,j}$ of change of variables are bounded on the space $H^\varphi(\mathbb{R}^n)$. These properties of $H^\varphi(\mathbb{R}^n)$ follow by \eqref{f4.4} from their known analogs for the Sobolev spaces $H^{\mp k}(\mathbb{R}^n)$.

According to \eqref{f4.17}, the mapping \eqref{f4.15} extends by continuity to a bounded linear operator
\begin{equation}\label{f4.18}
K:(H^{\varphi}(\mathbb{R}^n))^{\varkappa}\rightarrow H^{\varphi}(\Gamma)
\end{equation}
and, specifically, to bounded linear operators
\begin{equation}\label{f4.19}
K:(H^{\mp k}(\mathbb{R}^n))^{\varkappa}\rightarrow H^{\mp k}(\Gamma).
\end{equation}
Hence, a restriction of the first operator \eqref{f4.19} acts continuously
\begin{equation}\label{f4.19b}
K:\bigl(H^{\varphi}(\mathbb{R}^n)\bigr)^{\varkappa}=
\bigl[(H^{-k}(\mathbb{R}^n))^{\varkappa},
(H^{k}(\mathbb{R}^n))^{\varkappa}\bigr]_{\psi}\to
\bigl[H^{-k}(\Gamma),H^{k}(\Gamma)\bigr]_{\psi}
\end{equation}
in view of \eqref{f4.4}.

According to \eqref{f4.11} and \eqref{f4.19b}, we have the bounded operator
\begin{equation*}
KT:H^{\varphi}(\Gamma)\to[H^{-k}(\Gamma),H^{k}(\Gamma)]_{\psi}.
\end{equation*}
Besides, owing to \eqref{f4.14} and \eqref{f4.18}, we get the bounded operator
\begin{equation*}
KT:[H^{-k}(\Gamma),H^{k}(\Gamma)]_{\psi}\to H^{\varphi}(\Gamma).
\end{equation*}
These operators are identical mappings in view of \eqref{f4.16} and the density of $C^{\infty}(\Gamma)$ in their target spaces. Thus, the required equality \eqref{f4.9} holds true up to equivalence of norms.
\end{proof}

\begin{proof}[Proof of Theorem $\ref{th4.6}$.]
Let us prove that the initial definition of $H^{\varphi}(\Gamma)$ as the completion of $C^{\infty}(\Gamma)$ with respect to the inner product \eqref{f4.5} is equivalent to each of definitions (i)\,--\,(iii). The initial definition is tantamount to (i) due to Theorem~\ref{th4.3} in the $A=(1-\Delta_{\Gamma})^{1/2}$ case. Hence, the initial definition is equivalent to (iii) in view of Theorem~\ref{th2.5}.

To prove the equivalence of this definition and (ii), it suffices to show  that $C^{\infty}(\Gamma)$ is dense in the space defined by (ii). We arbitrarily choose a distribution $f\in\mathcal{D}'(\Gamma)$ such that $(\chi_{j}f)\circ\pi_{j}\in
H^{\varphi}(\mathbb{R}^{n})$ for every $j\in\{1,\ldots,\varkappa\}$. Given such $j$, we take a sequence $(w^{r}_{j})_{r=1}^{\infty}\subset C^{\infty}_{0}(\mathbb{R}^{n})$ such that $w^{(r)}_{j}\to(\chi_{j}f)\circ\pi_{j}$ in $H^{\varphi}(\mathbb{R}^{n})$ as $r\to\infty$. Let $T$ and $K$ be the flattening and sewing mappings used in the proof of Theorem~\ref{th4.3}. These mappings are well defined respectively on $\mathcal{D}'(\Gamma)$ and $(\mathcal{S}'(\mathbb{R}^{n}))^{\varkappa}$, with the formulas $KTf=f$ and \eqref{f4.17} being valid whenever $f\in\mathcal{D}'(\Gamma)$ and $w\in(H^{\varphi}(\mathbb{R}^{n}))^{\varkappa}$. Therefore, putting $\mathbf{w}^{(r)}:=(w^{(r)}_{1},\ldots,w^{(r)}_{\varkappa})$,
we conclude that $K\mathbf{w}^{(r)}\in C^{\infty}(\Gamma)$ and that
\begin{align*}
\|K\mathbf{w}^{(r)}-f\|_{\varphi,\Gamma}^{2}&=
\|K(\mathbf{w}^{(r)}-Tf)\|_{\varphi,\Gamma}^{2}\\
&\leq c\sum_{l=1}^{\varkappa}
\|w^{(r)}_{l}-(\chi_{l}f)\circ\pi_{l}\|_{\varphi,\mathbb{R}^n}^{2}
\to0\quad\mbox{as}\quad r\to\infty.
\end{align*}
Thus, $C^{\infty}(\Gamma)$ is dense in the space defined by (ii), and the initial definition is then equivalent to (ii).
\end{proof}

At the end of this subsection, we give a description of the space $H^{\varphi}(\Gamma)$ in terms of sequences induced by the spectral decomposition of the self-adjoint operator $A$. Since this operator is positive definite and since $\mathrm{Dom}\,A=H^{1}(\Gamma)$, its inverse $A^{-1}$ is a compact self-adjoint operator on $L_{2}(\Gamma)$ (recall that $H^{1}(\Gamma)$ is compactly embedded in $L_{2}(\Gamma)$). Hence, the Hilbert space $L_{2}(\Gamma)$ has an orthonormal basis $\mathcal{E}:=(e_{j})_{j=1}^{\infty}$ formed by eigenvectors of $A$.
Let $\lambda_{j}\geq1$ be the corresponding eigenvalue of $A$, i.e. $Ae_j=\lambda_{j}e_j$. We may and will enumerate the eigenvectors $e_{j}$ so that $\lambda_{j}\leq\lambda_{j+1}$ whenever $j\geq1$, with $\lambda_{j}\to\infty$ as $j\to\infty$. Since $\mathcal{A}$ is elliptic on $\Gamma$, each $e_{j}\in C^{\infty}(\Gamma)$. We suppose that
the PsDO $\mathcal{A}$ is classical (i.e. polyhomogeneous); see, e.g., \cite[Definitions 1.5.1 and 2.1.3]{Agranovich94}. Then
\begin{equation}\label{f5.20}
\lambda_{j}\sim\widetilde{c}\,j^{1/n}\quad\mbox{as}\quad j\to\infty,
\end{equation}
where $\widetilde{c}$ is a positive number that does not depend on $j$ \cite[Section 6.1~b]{Agranovich94}. Every distribution $f\in\mathcal{D}'(\Gamma)$ expands into the series
\begin{equation}\label{f5.21}
f=\sum_{j=1}^{\infty}\varkappa_{j}(f)e_j\quad\mbox{in}\;\;\mathcal{D}'(\Gamma);
\end{equation}
here, $\varkappa_{j}(f):=(f,e_j)_{\Gamma}$ is the value of the distribution $f$ at the test function $e_{j}$ \cite[Section 6.1~a]{Agranovich94}.

\begin{theorem}\label{th5.8}
Let $\varphi\in\mathrm{OR}$. Then the space $H^{\varphi}(\Gamma)$ consists of all distributions $f\in\mathcal{D}'(\Gamma)$ such that
\begin{equation}\label{f5.22}
\|f\|_{\varphi,\Gamma,\mathcal{E}}^{2}:=
\sum_{j=1}^{\infty}\varphi^{2}(j^{1/n})|\varkappa_{j}(f)|^{2}<\infty,
\end{equation}
and the norm in $H^{\varphi}(\Gamma)$ is equivalent to the (Hilbert) norm $\|\cdot\|_{\varphi,\Gamma,\mathcal{E}}$. If $f\in H^{\varphi}(\Gamma)$, then the series \eqref{f5.21} converges in $H^{\varphi}(\Gamma)$.
\end{theorem}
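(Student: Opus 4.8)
The plan is to reduce the statement to the operational description of $H^{\varphi}(\Gamma)$ and then to diagonalize $\varphi(A)$ in the eigenbasis $\mathcal{E}$. First I would invoke Theorem~\ref{th4.3}: up to equivalence of norms, $H^{\varphi}(\Gamma)=H^{\varphi}_{A}(\Gamma)$, the completion of $C^{\infty}(\Gamma)$ in the norm $\|\varphi(A)f\|_{\Gamma}$; moreover, by \eqref{f4.8}, this space embeds continuously into $H^{-k}(\Gamma)$ and hence into $\mathcal{D}'(\Gamma)$, so each coefficient functional $f\mapsto\varkappa_{j}(f)$ is continuous on it. Since $\mathcal{E}=(e_{j})_{j=1}^{\infty}$ is an orthonormal eigenbasis of $A$ with $Ae_{j}=\lambda_{j}e_{j}$, the self-adjoint operator $\varphi(A)$ is diagonal in $\mathcal{E}$: its domain consists precisely of those $f\in L_{2}(\Gamma)$ for which $\sum_{j}\varphi^{2}(\lambda_{j})|\varkappa_{j}(f)|^{2}<\infty$, it contains $C^{\infty}(\Gamma)$ because $C^{\infty}(\Gamma)\subset\mathrm{Dom}\,A^{s_{1}}\subset\mathrm{Dom}\,\varphi(A)$ for a positive integer $s_{1}$ satisfying \eqref{f2.3}, and $\|\varphi(A)f\|_{\Gamma}^{2}=\sum_{j}\varphi^{2}(\lambda_{j})|\varkappa_{j}(f)|^{2}$ for every such $f$. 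Thus on $C^{\infty}(\Gamma)$ the norm of $H^{\varphi}_{A}(\Gamma)$ equals $\bigl(\sum_{j}\varphi^{2}(\lambda_{j})|\varkappa_{j}(\cdot)|^{2}\bigr)^{1/2}$, and in particular $C^{\infty}(\Gamma)$ is dense in $H^{\varphi}_{A}(\Gamma)$.

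Next I would replace $\varphi(\lambda_{j})$ by $\varphi(j^{1/n})$. By the asymptotics \eqref{f5.20}, the ratios $\lambda_{j}/j^{1/n}$ lie in a fixed compact subset of $(0,\infty)$, while $\lambda_{j}\geq1$ and $j^{1/n}\geq1$. Using the two-sided estimate \eqref{f2.3} (together with its reciprocal form, obtained by taking the dilation factor in $(0,1]$) and the fact that every $\mathrm{OR}$-function is bounded above and below on each compact subinterval of $[1,\infty)$, one obtains positive constants $c',c''$ with $c'\,\varphi(j^{1/n})\leq\varphi(\lambda_{j})\leq c''\,\varphi(j^{1/n})$ for all $j\geq1$. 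Consequently the norm $\bigl(\sum_{j}\varphi^{2}(\lambda_{j})|\varkappa_{j}(f)|^{2}\bigr)^{1/2}$ and the norm $\|f\|_{\varphi,\Gamma,\mathcal{E}}$ of \eqref{f5.22} are equivalent on $C^{\infty}(\Gamma)$, so $H^{\varphi}(\Gamma)$ is also the completion of $C^{\infty}(\Gamma)$ with respect to $\|\cdot\|_{\varphi,\Gamma,\mathcal{E}}$, and its norm is equivalent to $\|\cdot\|_{\varphi,\Gamma,\mathcal{E}}$.

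It then remains to identify this completion, regarded as a subspace of $\mathcal{D}'(\Gamma)$, with $\{f\in\mathcal{D}'(\Gamma):\|f\|_{\varphi,\Gamma,\mathcal{E}}<\infty\}$, and to prove the convergence of the expansion \eqref{f5.21}. For one inclusion: if $(f_{k})\subset C^{\infty}(\Gamma)$ converges in $H^{\varphi}(\Gamma)$ to $f$, then $f_{k}\to f$ in $\mathcal{D}'(\Gamma)$ as well, whence $\varkappa_{j}(f_{k})\to\varkappa_{j}(f)$ for each $j$; since $(f_{k})$ is Cauchy in $\|\cdot\|_{\varphi,\Gamma,\mathcal{E}}$, i.e.\ $(\varkappa_{j}(f_{k}))_{j}$ is Cauchy in the $\ell_{2}$ space with weights $\varphi^{2}(j^{1/n})$, its limit there must be $(\varkappa_{j}(f))_{j}$, so $\|f\|_{\varphi,\Gamma,\mathcal{E}}<\infty$. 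For the converse inclusion and the convergence of \eqref{f5.21}: given $f\in\mathcal{D}'(\Gamma)$ with $\|f\|_{\varphi,\Gamma,\mathcal{E}}<\infty$, the partial sums $S_{N}:=\sum_{j=1}^{N}\varkappa_{j}(f)e_{j}\in C^{\infty}(\Gamma)$ satisfy $\|S_{M}-S_{N}\|_{\varphi,\Gamma,\mathcal{E}}^{2}=\sum_{j=N+1}^{M}\varphi^{2}(j^{1/n})|\varkappa_{j}(f)|^{2}\to0$ as $N\to\infty$, so $(S_{N})$ converges in $H^{\varphi}(\Gamma)$; its limit also converges in $\mathcal{D}'(\Gamma)$, and since $S_{N}\to f$ in $\mathcal{D}'(\Gamma)$ by \eqref{f5.21}, the limit equals $f$. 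Hence $f\in H^{\varphi}(\Gamma)$ and the series \eqref{f5.21} converges to $f$ in $H^{\varphi}(\Gamma)$, which completes the proof.

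I expect the only delicate steps to be two. First, making the relation $\varphi(\lambda_{j})\asymp\varphi(j^{1/n})$ rigorous: for large $j$ this follows from \eqref{f5.20} and the $\mathrm{OR}$-property, but the finitely many small indices must be handled separately, using that $\varphi$ is bounded away from $0$ and $\infty$ on the compact set containing all the values $\lambda_{j}$ and $j^{1/n}$ that fall below a fixed bound. Second, bookkeeping with the abstract completion: since $H^{\varphi}_{A}(\Gamma)$ is \emph{a priori} only an abstract completion, one must use the continuous embedding $H^{\varphi}_{A}(\Gamma)\hookrightarrow\mathcal{D}'(\Gamma)$ furnished by \eqref{f4.8} to guarantee that Cauchy sequences converge to the expected distributions and that the functionals $\varkappa_{j}$ are continuous. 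Everything else is routine spectral-theory and Hilbert-space bookkeeping.
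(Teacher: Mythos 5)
Your proposal is correct and follows essentially the same route as the paper: reduce to the operational norm $\|\varphi(A)f\|_{\Gamma}$ via Theorem~\ref{th4.3}, use \eqref{f5.20} and the $\mathrm{OR}$-property to get $\varphi(\lambda_{j})\asymp\varphi(j^{1/n})$, and then identify the completion through the two inclusions and the convergence of the partial sums. The only cosmetic differences are that the paper deduces the forward inclusion by Fatou's lemma where you use completeness of the weighted $\ell_{2}$ space, and it phrases the converse via the auxiliary vector $h=\sum_{j}\varphi(\lambda_{j})\varkappa_{j}(f)e_{j}$ rather than directly via the partial sums; both are equivalent.
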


\begin{proof}
It follows from \eqref{f5.20} and $\varphi\in\mathrm{OR}$ that there exists a number $c\geq1$ such that
\begin{equation}\label{f5.23}
c^{-1}\varphi(\lambda_{j})\leq\varphi(j^{1/n})\leq c\,\varphi(\lambda_{j})\quad\mbox{whenever}\quad 1\leq j\in\mathbb{Z}.
\end{equation}
Since $\mathrm{Spec}\,A=\{\lambda_{j}:j\geq1\}$, we have
\begin{equation*}
\|\varphi(A)f\|_{\Gamma}^{2}=
\sum_{j=1}^{\infty}\varphi^{2}(\lambda_{j})|\varkappa_{j}(f)|^{2}<\infty
\end{equation*}
for every $f\in\mathrm{Dom}\,\varphi(A)$. Hence, the norm $\|\cdot\|_{\varphi,\Gamma,\mathcal{E}}$ is equivalent to the norm in $H^{\varphi}(\Gamma)$ on $\mathrm{Dom}\,\varphi(A)\supset C^{\infty}(\Gamma)$ (see Theorem~\ref{th4.3}).

If $f\in H^{\varphi}(\Gamma)$, we consider a sequence $(f_k)_{k=1}^{\infty}\subset C^{\infty}(\Gamma)$ such that $f_{k}\rightarrow f$ in $H^{\varphi}(\Gamma)$ as $k\rightarrow\infty$. There exist positive numbers $c_1$ and $c_2$ such that
\begin{equation*}
\sum_{j=1}^{\infty}\varphi^{2}(j^{1/n})|\varkappa_{j}(f_k)|^{2}=
\|f_k\|_{\varphi,\Gamma,\mathcal{E}}^{2}\leq
c_1\|f_k\|_{\varphi,\Gamma}^{2}\leq c_2<\infty
\end{equation*}
for every integer $k\geq1$. Passing here to the limit as $k\to\infty$ and taking $\varkappa_{j}(f_k)\to\varkappa_{j}(f)$ into account, we conclude by  Fatou's lemma that every distribution $f\in H^{\varphi}(\Gamma)$ satisfies \eqref{f5.22}.

Assume now that a distribution $f\in\mathcal{D}'(\Gamma)$ satisfies \eqref{f5.22}, and prove that $f\in H^{\varphi}(\Gamma)$. Owing to our assumption and \eqref{f5.23}, we have the convergent orthogonal series
\begin{equation}\label{f5.26}
\sum_{j=1}^{\infty}\varphi(\lambda_{j})\varkappa_{j}(f)e_{j}=:h
\quad\mbox{in}\quad L_{2}(\Gamma).
\end{equation}
Consider its partial sum
\begin{equation*}
h_k:=\sum_{j=1}^{k}\varphi(\lambda_{j})\varkappa_{j}(f)e_{j}
\end{equation*}
for each $k$, and note that
\begin{equation*}
\varphi^{-1}(A)h_k=\sum_{j=1}^{k}\varkappa_{j}(f)e_{j}\in C^{\infty}(\Gamma).
\end{equation*}
Since $h_k\to h$ in $L_{2}(\Gamma)$ as $k\to\infty$, the sequence $(\varphi^{-1}(A)h_k)_{k=1}^{\infty}$ is Cauchy in $H^{\varphi}_{A}(\Gamma)$. Denoting its limit by $g$, we get
\begin{equation}\label{f5.27}
g=\lim_{k\to\infty}\varphi^{-1}(A)h_k=
\sum_{j=1}^{\infty}\varkappa_{j}(f)e_{j}\quad\mbox{in}\quad H^{\varphi}(\Gamma).
\end{equation}
Hence, $f=g\in H^{\varphi}(\Gamma)$ in view of \eqref{f5.21}.

Thus, a distribution $f\in\mathcal{D}'(\Gamma)$ belongs to $H^{\varphi}(\Gamma)$ if and only if \eqref{f5.22} is satisfied.
Besides, given $f\in H^{\varphi}(\Gamma)$, we have
\begin{align*}
\|f\|_{\varphi,\Gamma}^{2}&=
\lim_{k\to\infty}\|\varphi^{-1}(A)h_k\|_{\varphi,\Gamma}^{2}\asymp
\lim_{k\to\infty}\|h_k\|_{\Gamma}^{2}=\|h\|_{\Gamma}^{2}\\
&=\sum_{j=1}^{\infty}\varphi^{2}(\lambda_{j})|\varkappa_{j}(f)|^{2}\asymp
\|f\|_{\varphi,\Gamma,\mathcal{E}}^{2}
\end{align*}
by \eqref{f5.23}, \eqref{f5.26}, and \eqref{f5.27} where $g=f$ (as usual, the symbol $\asymp$ means equivalence of norms). The last assertion of the theorem is due to~\eqref{f5.27}.
\end{proof}

%%TO BE CONTINUED!!!

\begin{remark}\label{rem4.8}
Let $0<m\in\mathbb{R}$. Analogs of Theorems \ref{th4.1} and \ref{th4.3} hold true for PsDOs of order~$m$. Namely, suppose that a PsDO $\mathcal{A}$ belongs to $\Psi^{m}(\mathbb{R}^{n})$ or $\Psi^{m}(\Gamma)$ and satisfies conditions (b) and (c). Let $\varphi\in\mathrm{OR}$, and  put $\varphi_{m}(t):=\varphi(t^{m})$ whenever $t\geq1$ (evidently, $\varphi_{m}\in\mathrm{OR}$). Then the equality of spaces
\begin{equation}\label{f4.20}
H^{\varphi}_{A}(\mathbb{R}^{n}\;\mbox{or}\;\Gamma)=
H^{\varphi_{m}}(\mathbb{R}^{n}\;\mbox{or}\;\Gamma)
\end{equation}
holds in the sense that these spaces coincide as completions of $C^{\infty}_{0}(\mathbb{R}^{n})$ or $C^{\infty}(\Gamma)$ with respect to equivalent norms. This implies that Corollaries \ref{cor4.2}, \ref{cor4.4}, and \ref{cor4.5} remain true in this (more general) case. The proof of \eqref{f4.20} is very similar to the proofs of Theorems \ref{th4.1} and \ref{th4.3}. We only observe that $H^{k}_{A}(V)=\nobreak H^{km}(V)$ for every $k\in\mathbb{Z}$ whenever $V=\mathbb{R}^{n}$ or $V=\Gamma$ because $\mathrm{ord}\,\mathcal{A}=m$, which gives
\begin{equation}\label{f4.21}
H^{\varphi}_{A}(V)=
\bigl[H^{-k}_{A}(V),H^{k}_{A}(V)\bigr]_{\psi}=
\bigl[H^{-km}(V),H^{km}(V)\bigr]_{\psi}=
H^{\varphi_{m}}(V)
\end{equation}
with equivalence of norms; here the integer $k>0$ and the interpolation parameter $\psi$ are the same as those in the proof of Theorem~\ref{th4.1}. The first equality in \eqref{f4.21} is due to Theorem~\ref{th2.5}, whereas the third is a direct consequence of this theorem and Theorems \ref{th4.1} and \ref{th4.3}. Note if the PsDO $\mathcal{A}\in\Psi^{m}(\Gamma)$ is classical, then $A^{1/m}$ is the closure (in $L_{2}(\Gamma)$) of some classical PsDO $\mathcal{A}_{1}\in\Psi^{1}(\Gamma)$ as was established by Seeley  \cite{Seeley67}. Hence,
$$
H^{\varphi}_{A}(\Gamma)=H^{\varphi_{m}}_{A^{1/m}}(\Gamma)=
H^{\varphi_{m}}(\Gamma)
$$
immediately due to Theorem~\ref{th4.3}. Ending this remark, note that Theorem~\ref{th5.8} remains true if the order of the classical PsDO $\mathcal{A}$ is~$m$. It follows from the fact every eigenvector of $A$ is also an eigenvector of $A^{1/m}$.
\end{remark}

\section{Spectral expansions in spaces with two norms}\label{sec6a}

We will obtain some abstract results on the convergence of spectral expansions in a Hilbert space endowed with a second norm. In the next section, we will apply these results (together with results of Section~\ref{sec4}) to the investigation of the convergence of spectral expansions induced by normal elliptic operators.

\subsection{}\label{sec6.1a}
As in Section~\ref{sec2}, $H$ is a separable infinite-dimensional complex Hilbert space. Let $L$ be a normal (specifically, self-adjoint) unbounded linear operator in $H$. Let $E$ be the resolution of the identity (i.e., the spectral measure) generated by $L$, we considering $E$ as an operator-valued function $E=E({\delta})$ of $\delta\in\mathcal{B}(\mathbb{C})$. Here, as usual,  $\mathcal{B}(\mathbb{C})$  denotes the class of all Borel subsets of the complex plane~$\mathbb{C}$. Then
\begin{equation}\label{f6.1}
f=\int\limits_{\mathbb{C}}dEf
\end{equation}
for every $f\in H$. Besides, let $N$ be a normed space. (We use the standard notation $\|\cdot\|_{N}$ for the norm in $N$. As above, $\|\cdot\|$ and $(\cdot,\cdot)$ denote the norm and inner product in~$H$.) Suppose that $N$ and $H$ are embedded algebraically in a certain linear space. We find sufficient conditions for the convergence of the spectral expansion \eqref{f6.1} in the space~$N$. Put $\widetilde{B}_{\lambda}:=\{z\in\mathbb{C}:|z|\leq\lambda\}$ for every number $\lambda>0$.

\begin{definition}\label{def6.1}
Let $f\in H$. We say that the spectral expansion \eqref{f6.1} converges unconditionally in the space $N$ at the vector $f$ if $E(\delta)f\in N$ whenever $\delta\in\mathcal{B}(\mathbb{C})$ and if for an arbitrary number $\varepsilon>0$ there exists a bounded set $\gamma=\gamma(\varepsilon)\in\mathcal{B}(\mathbb{C})$ such that
\begin{equation}\label{f6.2}
\|f-E(\delta)f\|_{N}<\varepsilon\quad\mbox{whenever}\quad \gamma\subseteq\delta\in\mathcal{B}(\mathbb{C}).
\end{equation}
\end{definition}

Note that
\begin{equation*}
E(\delta)f=\int\limits_{\delta}dEf
\end{equation*}
for all $f\in H$ and $\delta\in\mathcal{B}(\mathbb{C})$. If the spectrum of $L$ is countable, say $\mathrm{Spec}\,L=\{z_{j}:1\leq j\in\mathbb{Z}\}$ where $j\neq k\Rightarrow z_{j}\neq z_{k}$, then \eqref{f6.1} becomes
\begin{equation}\label{f6.3}
f=\sum_{j=1}^{\infty}E(\{z_{j}\})f.
\end{equation}
If moreover $|z_{j}|\to\infty$ as $j\to\infty$, Definition~\ref{def6.1} will mean that the series \eqref{f6.3} converges to $f$ in $N$ under an arbitrary permutation of its terms.

Let $I$ stand for the identity operator on $H$, and let $\|\cdot\|_{H\to N}$ and $\|\cdot\|_{H\to H}$ denote the norms of bounded linear operators on the pair of spaces $H$ and $N$ and on the space $H$, respectively.

\begin{theorem}\label{th6.2}
Let $R$ and $S$ be bounded linear operators on (whole) $H$ such that they are commutative with $L$ and that
\begin{equation}\label{f6.4}
\mbox{$R$ is a bounded operator from $H$ to $N$.}
\end{equation}
Then the spectral expansion \eqref{f6.1} converges unconditionally in the space $N$ at every vector $f\in RS(H)$. Moreover, the degree of this convergence admits the estimate
\begin{equation}\label{f6.5}
\|f-E(\delta)f\|_{N}\leq\|R\|_{H\to N}\cdot\|g\|\cdot \|S(I-E(\delta))\|_{H\to H}\cdot r_{g}(\delta)
\end{equation}
for every $\delta\in\mathcal{B}(\mathbb{C})$ and with some decreasing function $r_{g}(\delta)\in[0,1]$ of $\delta\in\mathcal{B}(\mathbb{C})$ such that $r_{g}(\widetilde{B}_{\lambda})\to0$ as $\lambda\to\infty$. Here, $g\in H$ is an arbitrary vector satisfying $f=RSg$, and the function $r_{g}(\delta)$ does not depend on $S$ and~$R$.
\end{theorem}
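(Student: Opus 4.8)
The plan is to use the commutativity of $R$ and $S$ with the spectral resolution $E$ of $L$ so as to factor the remainder $f-E(\delta)f$ through $R$, $S$, and the orthogonal projection $I-E(\delta)$, and then to read off both the quantitative bound \eqref{f6.5} and the unconditional convergence from elementary properties of $E$. First I would recall the standing fact that a bounded operator on $H$ commuting with the normal operator $L$ also commutes with every spectral projection $E(\delta)$, $\delta\in\mathcal{B}(\mathbb{C})$, and with every bounded Borel function of $L$; this is a standard consequence of the spectral theorem for normal operators. Applying this to $R$ and $S$, I get for each $\delta\in\mathcal{B}(\mathbb{C})$ and each $g\in H$ with $f=RSg$ the equality $E(\delta)f=RS\,E(\delta)g\in R(H)$, which lies in $N$ by \eqref{f6.4}; since also $f=RSg\in R(H)\subseteq N$, the quantities $\|f-E(\delta)f\|_{N}$ are well defined, and the membership requirement in Definition~\ref{def6.1} is fulfilled.

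For the core estimate I would fix $g$ with $f=RSg$ and, using $RE(\delta)=E(\delta)R$, $SE(\delta)=E(\delta)S$ together with $(I-E(\delta))^{2}=I-E(\delta)$, rewrite
\[
f-E(\delta)f=(I-E(\delta))RSg=R\,\bigl[S(I-E(\delta))\bigr]\,(I-E(\delta))g .
\]
Estimating the operator norms successively yields
\[
\|f-E(\delta)f\|_{N}\leq\|R\|_{H\to N}\cdot\|S(I-E(\delta))\|_{H\to H}\cdot\|(I-E(\delta))g\| .
\]
If $g=0$ then $f=0$ and the whole statement is trivial (take $r_{g}\equiv0$); otherwise I put
\[
r_{g}(\delta):=\frac{\|(I-E(\delta))g\|}{\|g\|},\qquad\delta\in\mathcal{B}(\mathbb{C}),
\]
which lies in $[0,1]$ because $I-E(\delta)$ is an orthogonal projection, depends only on $g$ and on the spectral measure of $L$ (hence not on $R$ or $S$), and turns the previous display into exactly \eqref{f6.5}.

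It remains to verify the two monotonicity properties of $r_{g}$ and then to extract unconditional convergence. Writing $\mu_{g}(\cdot):=(E(\cdot)g,g)$ for the finite nonnegative spectral measure of $g$, one has $\|(I-E(\delta))g\|^{2}=\mu_{g}(\mathbb{C}\setminus\delta)$; hence $\delta_{1}\subseteq\delta_{2}$ gives $\mathbb{C}\setminus\delta_{2}\subseteq\mathbb{C}\setminus\delta_{1}$ and so $r_{g}(\delta_{2})\leq r_{g}(\delta_{1})$, i.e. $r_{g}$ is decreasing, while $\widetilde{B}_{\lambda}\uparrow\mathbb{C}$ as $\lambda\to\infty$ forces $\mu_{g}(\mathbb{C}\setminus\widetilde{B}_{\lambda})\to0$, i.e. $r_{g}(\widetilde{B}_{\lambda})\to0$. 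Finally, bounding $\|S(I-E(\delta))\|_{H\to H}\leq\|S\|_{H\to H}$ in \eqref{f6.5} and using $r_{g}(\delta)\leq r_{g}(\gamma)$ whenever $\gamma\subseteq\delta$, I obtain $\|f-E(\delta)f\|_{N}\leq\|R\|_{H\to N}\,\|S\|_{H\to H}\,\|g\|\,r_{g}(\gamma)$ for all $\delta\supseteq\gamma$; given $\varepsilon>0$ I then choose $\gamma:=\widetilde{B}_{\lambda}$ with $\lambda$ large enough that the right-hand side is $<\varepsilon$, which is possible since $r_{g}(\widetilde{B}_{\lambda})\to0$, and this is precisely \eqref{f6.2}. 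The argument is mostly bookkeeping; the only points I expect to need care are the commutation principle relating a bounded operator to the unbounded normal $L$ (one should make the convention for ``commutative with $L$'' explicit, invoking Fuglede's theorem when $L$ is not self-adjoint), and the minor but essential trick of replacing $(I-E(\delta))g$ by $(I-E(\delta))^{2}g$ so that the sharper factor $\|S(I-E(\delta))\|_{H\to H}$, rather than $\|S\|_{H\to H}$, appears in \eqref{f6.5}.
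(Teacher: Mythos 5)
Your proposal is correct and follows essentially the same route as the paper's proof: commuting $R$ and $S$ past $E(\delta)$, inserting $(I-E(\delta))^{2}=I-E(\delta)$ to isolate the factor $\|S(I-E(\delta))\|_{H\to H}$, and defining $r_{g}(\delta):=\|(I-E(\delta))g\|/\|g\|$. Your explicit verification of the monotonicity and decay of $r_{g}$ via the spectral measure, and the derivation of \eqref{f6.2}, merely spell out what the paper leaves as ``plain''.
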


Note, if $T$ is a bounded linear operator on $H$ and if $M$ is an unbounded linear operator in $H$, the phrase ``$T$ is commutative with $M$'' means that $TMf=MTf$ for every vector $f\in(\mathrm{Dom}\,M)\cap\mathrm{Dom}(MT)$ (see, e.g., \cite[Chapter~IV, \S~3, Section~4]{FunctionalAnalysis72}).

\begin{proof}[Proof of Theorem $\ref{th6.2}$]
Choose a vector $f\in RS(H)\subseteq N\cap H$ arbitrarily. If $f=0$, the conclusion of this theorem will be trivial; we thus suppose that $f\neq0$. Consider a nonzero vector $g\in H$ such that $f=RSg$. Choose a set $\delta\in\mathcal{B}(\mathbb{C})$ arbitrarily. Since the operators $R$ and $S$ are bounded on $H$ and commutative with $L$, they are also commutative with $E(\delta)$. Therefore,
\begin{equation*}
E(\delta)f=E(\delta)(RS)g=(RS)E(\delta)g\in N
\end{equation*}
due to \eqref{f6.4}. Hence,
\begin{equation}\label{f6.8}
\begin{aligned}
\|f-E(\delta)f\|_{N}&
=\|RS(I-E(\delta))g\|_{N}=\|RS(I-E(\delta))^{2}g\|_{N}\\
&\leq\|R\|_{H\to N}\cdot\|S(I-E(\delta))\|_{H\to H}\cdot \|(I-E(\delta))g\|.
\end{aligned}
\end{equation}
Put
\begin{equation}\label{f6.10}
r_{g}(\delta):=\|(I-E(\delta))g\|\cdot\|g\|^{-1};
\end{equation}
then \eqref{f6.8} yields the required estimate \eqref{f6.5}. It follows plainly from \eqref{f6.10} that $r_{g}(\delta)$ viewed as a function of  $\delta\in\mathcal{B}(\mathbb{C})$ is required.
\end{proof}

\begin{remark}\label{rem6.3}
Let $R$ be a bounded operator on $H$. If the norms in $N$ and $H$ are compatible, condition \eqref{f6.4} is equivalent to the inclusion $R(H)\subseteq N$. Indeed, assume that these norms are compatible and that $R(H)\subseteq N$, and show that $R$ satisfies~\eqref{f6.4}. According to the closed graph theorem, the operator $R:H\to\widetilde{N}$ is bounded if and only if it is closed; here, $\widetilde{N}$ is the completion of the normed space~$N$. Therefore, it is enough to prove that this operator is closable. Assume that a sequence $(f_{k})_{k=1}^{\infty}\subset H$ satisfies the following two conditions: $f_{k}\to0$ in $H$ and $Rf_{k}\to h$ in $\widetilde{N}$ for certain $h\in\widetilde{N}$, as $k\to\infty$. Then $Rf_{k}\to0$ in $H$ because $R$ is bounded on $H$. Hence, $h=0$ as the norms in $N$ and $H$ are compatible. Thus, the operator $R:H\to\widetilde{N}$ is closable.
\end{remark}

\begin{remark}\label{rem6.4}
Borel measurable bounded functions of $L$ are important examples of the bounded operators on $H$ commuting with~$L$. If $S=\eta(L)$ for a bounded Borel measurable function $\eta:\mathrm{Spec}\,L\to\mathbb{C}$, the third factor on the right of \eqref{f6.5} will admit the estimate
\begin{equation}
\begin{aligned}
\|S(I-E(\delta))\|_{H\to H}&\leq
\sup\bigl\{|\eta(z)|(1-\chi_{E(\delta)}(z)):z\in\mathrm{Spec}\,L\bigr\}\\
&\leq\sup\bigl\{|\eta(z)|:z\in(\mathrm{Spec}\,L)\setminus\delta\bigr\}.
\end{aligned}
\end{equation}
(As usual, $\chi_{E(\delta)}$ stands for the characteristic function of the set $E(\delta)$.) Hence, if $\eta(z)\to0$ as $|z|\to\infty$, then
\begin{equation*}
\lim_{\lambda\to\infty}\|S(I-E(\widetilde{B}_{\lambda}))\|_{H\to H}=0
\end{equation*}
(as well as the fourth factor $r_{g}(\delta)$ if $\delta=\widetilde{B}_{\lambda}$).
\end{remark}

\subsection{}\label{sec6.1b}
Assume now that the normal operator $L$ has pure point spectrum, i.e. the Hilbert space $H$ has an orthonormal basis $(e_{j})_{j=1}^{\infty}$ formed by some eigenvectors $e_{j}$ of $L$. Unlike the previous part of this subsection, we suppose that $L$ is either unbounded in $H$ or bounded on $H$. Thus,
\begin{equation}\label{f6.11}
f=\sum_{j=1}^{\infty}(f,e_j)e_j
\end{equation}
in $H$ for every $f\in H$. Let $\lambda_{j}$ denote the eigenvalue of $L$ such that $Le_j=\lambda_{j}e_j$.  Note that the expansions \eqref{f6.1} and \eqref{f6.3} become \eqref{f6.11} provided that all the proper subspaces of $L$ are one-dimensional. Let $P_k$ denote the orthoprojector on the linear span of the eigenvectors $e_1,\ldots,e_k$.

\begin{theorem}\label{th6.5}
Let $\omega,\eta:\mathrm{Spec}\,L\to\mathbb{C}\setminus\{0\}$ be Borel measurable bounded functions, and consider the bounded linear operators $R:=\omega(L)$ and $S:=\eta(L)$ on $H$. Assume that $R$ satisfies \eqref{f6.4}. Then  the series \eqref{f6.11} converges unconditionally (i.e. under each permutation of its terms) in the space $N$ at every vector $f\in RS(H)$. Moreover, the degree of this convergence admits the estimate
\begin{equation}\label{f6.12}
\biggl\|f-\sum_{j=1}^{k}(f,e_j)e_j\biggr\|_{N}\leq
\|R\|_{H\to N}\cdot\|g\|\cdot \|S(I-P_k)\|_{H\to H}\cdot r_{g,k}
\end{equation}
for every integer $k\geq1$ and with some decreasing sequence  $(r_{g,k})_{k=1}^{\infty}\subset[0,1]$ that tends to zero and does not depend on $S$ and~$R$. Here, $g:=(RS)^{-1}f\in H$.
\end{theorem}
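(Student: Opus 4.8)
The plan is to run the argument of Theorem~\ref{th6.2} almost verbatim, with orthoprojectors onto \emph{finite} families of the basis eigenvectors playing the role of the spectral projections $E(\delta)$. For a finite set $F$ of positive integers let $P_{F}$ denote the orthoprojector of $H$ onto the linear span of $\{e_{j}:j\in F\}$, so that $P_{\{1,\ldots,k\}}=P_{k}$. First I would record the two preliminary facts that make everything work. (i)~Since $\omega$ and $\eta$ are bounded, the operators $R=\omega(L)$ and $S=\eta(L)$ are bounded on $H$; moreover, $\omega(L)$, $\eta(L)$, and every $P_{F}$ act diagonally in the orthonormal basis $(e_{j})$ (namely $\omega(L)e_{j}=\omega(\lambda_{j})e_{j}$, and $P_{F}e_{j}$ equals $e_{j}$ or $0$), so each of $R$ and $S$ commutes with every $P_{F}$. (ii)~Because $\omega$ and $\eta$ take no value $0$ on $\mathrm{Spec}\,L$, the operators $R$ and $S$ are injective, hence so is $RS$; therefore, for $f\in RS(H)$, the vector $g:=(RS)^{-1}f\in H$ is well defined. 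If $f=0$ the assertion is trivial, so we may assume $g\neq0$.

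Next I would derive the estimate. Fix a finite set $F$ (in particular $F=\{1,\ldots,k\}$). Commuting $P_{F}$ past $R$ and $S$ and using \eqref{f6.4} gives $P_{F}f=P_{F}RS\,g=RS\,P_{F}g\in N$. Using the idempotency $(I-P_{F})^{2}=I-P_{F}$ exactly as in \eqref{f6.8},
\begin{equation*}
\|f-P_{F}f\|_{N}=\bigl\|RS(I-P_{F})^{2}g\bigr\|_{N}
\leq\|R\|_{H\to N}\cdot\|S(I-P_{F})\|_{H\to H}\cdot\|(I-P_{F})g\|.
\end{equation*}
Taking $F=\{1,\ldots,k\}$ and putting $r_{g,k}:=\|(I-P_{k})g\|\cdot\|g\|^{-1}$ yields \eqref{f6.12}.

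It then remains to check the properties of $(r_{g,k})$ and to read off unconditional convergence. Since $(e_{j})$ is an orthonormal basis, $\|(I-P_{k})g\|^{2}=\sum_{j>k}|(g,e_{j})|^{2}$ is the tail of the convergent series $\|g\|^{2}=\sum_{j\geq1}|(g,e_{j})|^{2}$; hence $r_{g,k}\in[0,1]$, the sequence $(r_{g,k})_{k=1}^{\infty}$ is decreasing, $r_{g,k}\to0$ as $k\to\infty$, and plainly $r_{g,k}$ depends only on $g$ and the fixed basis, not on the choice of $S$ and $R$ (mirroring the corresponding clause of Theorem~\ref{th6.2}). For the unconditional convergence, given $\varepsilon>0$ I would use the convergence of $\sum_{j}|(g,e_{j})|^{2}$ to pick a finite set $F_{0}$ with $\|(I-P_{F_{0}})g\|<\varepsilon\,\|R\|_{H\to N}^{-1}\|S\|_{H\to H}^{-1}$ (here $\|R\|_{H\to N}>0$ and $\|S\|_{H\to H}>0$ by injectivity); then for every finite $F\supseteq F_{0}$ one has $\|(I-P_{F})g\|\leq\|(I-P_{F_{0}})g\|$ and $\|S(I-P_{F})\|_{H\to H}\leq\|S\|_{H\to H}$, so the displayed estimate gives $\bigl\|f-\sum_{j\in F}(f,e_{j})e_{j}\bigr\|_{N}=\|f-P_{F}f\|_{N}<\varepsilon$, which is exactly the assertion that the series \eqref{f6.11} converges to $f$ in $N$ under every permutation of its terms.

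The computation is entirely routine and I expect no serious obstacle, since the substance is already contained in the proof of Theorem~\ref{th6.2}. The one point deserving care is that $P_{k}$ is in general \emph{not} a spectral projection $E(\delta)$ of $L$ (when an eigenvalue is repeated, $P_{k}$ truncates the corresponding eigenspace), so Theorem~\ref{th6.2} cannot simply be quoted; one must instead verify directly that the truncation operators $P_{F}$ commute with $R=\omega(L)$ and $S=\eta(L)$ and re-run the idempotency argument for these $P_{F}$, which is immediate from diagonality in the eigenbasis $(e_{j})$.
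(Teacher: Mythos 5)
Your proposal is correct and follows essentially the same route as the paper's proof: commutativity of $R$, $S$ with $P_k$ via diagonal action in the eigenbasis, the idempotency trick $\|RS(I-P_k)^{2}g\|_{N}\leq\|R\|_{H\to N}\|S(I-P_k)\|_{H\to H}\|(I-P_k)g\|$, and $r_{g,k}:=\|(I-P_k)g\|\,\|g\|^{-1}$. Your explicit net argument over finite index sets merely spells out what the paper disposes of in one sentence (permutation-invariance of the hypotheses), so there is no substantive difference.
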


\begin{proof}
Since $RSe_j=(\omega\eta)(\lambda_j)e_j$ for every integer $j\geq1$ and since $(\omega\eta)(t)\neq0$ for every $t\in\mathrm{Spec}\,L$, we conclude that each $e_j\in N$ in view of hypothesis \eqref{f6.4}. Thus, the left-hand side of \eqref{f6.12} makes sense. Besides, the operator $RS=(\omega\eta)(L)$ is algebraically reversible; hence, the vector $g:=(RS)^{-1}f\in H$ is well defined for every $f\in RS(H)$. We suppose that $f\neq0$ because the conclusion of this theorem is trivial in the  $f=0$ case. Choosing an integer $k\geq1$ arbitrarily, we get
\begin{align*}
(RS)P_{k}g&=RS\sum_{j=1}^{k}(g,e_j)e_j=\sum_{j=1}^{k}(g,e_j)RSe_j=
\sum_{j=1}^{k}(g,e_j)(\omega\eta)(\lambda_j)e_j\\
&=P_{k}(RS)\sum_{j=1}^{\infty}(g,e_j)e_j=P_{k}(RS)g.
\end{align*}
Hence,
\begin{equation}\label{f6.13}
\begin{aligned}
\biggl\|f-\sum_{j=1}^{k}(f,e_j)e_j\biggr\|_{N}&=\|f-P_{k}f\|_{N}=
\|RSg-P_{k}(RS)g\|_{N}\\
&=\|RS(I-P_{k})g\|_{N}=\|RS(I-P_{k})^{2}g\|_{N}\\
&\leq\|R\|_{H\to N}\cdot\|S(I-P_{k})\|_{H\to H}\cdot \|(I-P_{k})g\|.
\end{aligned}
\end{equation}
Putting
\begin{equation}\label{f6.14}
r_{g,k}:=\|(I-P_k)g\|\cdot\|g\|^{-1},
\end{equation}
we see that \eqref{f6.13} yields the required estimate \eqref{f6.12}. It follows plainly from \eqref{f6.14} that the sequence $(r_{g,k})_{k=1}^{\infty}$ is required. Hence, the series \eqref{f6.11} converges in $N$. This convergence is unconditional because the hypotheses of the theorem are invariant with respect to permutations of terms of this series.
\end{proof}

\begin{remark}\label{rem6.6}
The third factor on the right of \eqref{f6.12} admits the estimate
\begin{equation}\label{f6.15}
\|S(I-P_k)\|_{H\to H}\leq\sup_{j\geq k+1}|\eta(\lambda_j)|
\end{equation}
for each integer $k\geq1$. Indeed, since
\begin{equation*}
S(I-P_k)f=\eta(L)\sum_{j=k+1}^{\infty}(f,e_{j})e_{j}=
\sum_{j=k+1}^{\infty}(f,e_{j})\eta(\lambda_j)e_{j}
\end{equation*}
for every $f\in H$ (the convergence holds in $H$), we have
\begin{equation*}
\|S(I-P_k)f\|^{2}=\sum_{j=k+1}^{\infty}|(f,e_{j})\eta(\lambda_j)|^{2}\leq
\bigl(\sup_{j\geq k+1}|\eta(\lambda_j)|\bigr)^{2}\cdot\|f\|^{2},
\end{equation*}
which gives \eqref{f6.15}. Specifically, if $\eta(t)\to0$ as $|t|\to\infty$ and if $|\lambda_j|\to\infty$ as $j\to\infty$, then
\begin{equation*}
\lim_{k\to\infty}\|S(I-P_k)\|_{H\to H}=0
\end{equation*}
(as well as the fourth factor $r_{g,k}$).
\end{remark}

It is worthwhile to note that the hypotheses of Theorem~\ref{th6.5} do not depend on the choice of a basis of~$H$. They hence imply the unconditional convergence of the series \eqref{f6.11} in $N$ at every vector $f\in RS(H)$ for \emph{any} orthonormal basis of $H$ formed by eigenvectors of~$L$. Remark also that Theorem~\ref{th6.5} reinforces the conclusion of Theorem~\ref{th6.2} under the hypotheses of Theorem~\ref{th6.5}. Indeed, owing to Theorem~\ref{th6.2}, the series \eqref{f6.11} converges in $N$ at every $f\in RS(H)$ if its terms corresponding to equal eigenvalues are grouped together and if $|\lambda_{j}|\to\infty$ as $j\to\infty$.

Theorem~\ref{th6.5} contains M.~G.~Krein's theorem \cite{Krein47} according to which the series \eqref{f6.11} converges in $N$ at every $f\in L(H)$ if $L$ is a self-adjoint compact operator in $H$ obeying \eqref{f6.4}. The latter theorem generalizes (to abstract operators) the Hilbert\,--\,Schmidt theorem about the uniform decomposability of sourcewise representable functions with respect to eigenfunctions of a symmetric integral operator. If $L$ is a positive definite self-adjoint operator with discrete spectrum and if $R=L^{-\sigma}$ and $S=L^{-\tau}$ for certain $\sigma,\tau\geq0$ and if $R$ satisfies \eqref{f6.4}, Krasnosel'ski\u{\i} and Pustyl'nik \cite[Theorem~22.1]{KrasnoselskiiZabreikoPustylnikSobolevskii76} proved that the left-hand side of \eqref{f6.12} is $o(\lambda_{k}^{-\tau})$ as $k\to\infty$. This result follows from \eqref{f6.12} in view of \eqref{f6.15}.

\section{Applications to spectral expansions induced by elliptic operators}\label{sec6}

This section is devoted to applications of results of Sections \ref{sec4} and \ref{sec6a} to the investigation of the convergence (in the uniform metric) of spectral expansions induced by uniformly elliptic operators on $\mathbb{R}^{n}$ and by elliptic operators on a closed manifold $\Gamma\in C^{\infty}$. We find explicit criteria of the convergence of these expansions in the normed space $C^{q}$, with $q\geq0$, on the function class $H^{\varphi}$, with $\varphi\in\mathrm{OR}$, and evaluate the degree of this convergence. Besides, we consider applications of the spaces $H^{\varphi}(\Gamma)$ to the investigation of the almost everywhere convergence of the spectral expansions.

\subsection{}\label{sec6.2}
Let $1\leq n\in\mathbb{Z}$ and $0<m\in\mathbb{R}$. We suppose in this subsection that $L$ is a PsDO of class $\Psi^{m}(\mathbb{R}^{n})$ and that $L$ is uniformly elliptic on $\mathbb{R}^{n}$. We may and will consider $L$ as a closed unbounded operator in the Hilbert space $H:=L_{2}(\mathbb{R}^{n})$ with $\mathrm{Dom}\,L=H^{m}(\mathbb{R}^{n})$ (see \cite[Sections 2.3~d and 3.1~b]{Agranovich94}). We also suppose that $L$ is a normal operator in $L_{2}(\mathbb{R}^{n})$. Then $L$ generates a resolution of the identity $E=E({\delta})$, and the spectral expansion \eqref{f6.1} holds for every function $f\in L_{2}(\mathbb{R}^{n})$. Note that the spectrum of $L$ may be uncountable and may not have any eigenfunctions. Hence, the expansion \eqref{f6.1}
may not be represented in the form of the series \eqref{f6.3} or \eqref{f6.11}. For example, if $L=-\Delta$, then the spectrum of $L$ coincides with $[0,\infty)$ and is continuous.

\begin{definition}\label{def6.7}
Let a normed function space $N$ lie in $\mathcal{S}'(\mathbb{R}^{n})$. We say that the expansion \eqref{f6.1} (where $H=L_{2}(\mathbb{R}^{n})$) converges unconditionally in $N$ on a function class $\Upsilon$ if $\Upsilon\subset L_{2}(\mathbb{R}^{n})$ and if this expansion satisfies Definition~\ref{def6.1} for every $f\in\Upsilon$.
\end{definition}

We consider the important case where $N=C^{q}_{\mathrm{b}}(\mathbb{R}^{n})$ for an integer $q\geq0$ and use generalized Sobolev spaces $H^{\varphi}(\mathbb{R}^{n})$ as $\Upsilon$. Here, $C^{q}_{\mathrm{b}}(\mathbb{R}^{n})$ denotes the Banach space of
$q$ times continuously differentiable functions $\nobreak{f:\mathbb{R}^{n}\to\mathbb{C}}$ whose partial derivatives $\partial^{\alpha}f$ are bounded on $\mathbb{R}^{n}$ whenever $|\alpha|\leq q$. As usual, $\alpha=(\alpha_{1},\ldots,\alpha_{n})\in\mathbb{Z}_{+}^{n}$ and $|\alpha|=\alpha_{1}+\cdots+\alpha_{n}$. This space is endowed with the norm
\begin{equation*}
\|f\|_{C,q,\mathbb{R}^{n}}:=\sum_{|\alpha|\leq q}\,
\sup\bigl\{|\partial^{\alpha}f(x)|:x\in\mathbb{R}^{n}\bigr\}.
\end{equation*}

\begin{theorem}\label{th6.8}
Let $0\leq q\in\mathbb{Z}$ and $\varphi\in\mathrm{OR}$. The spectral expansion \eqref{f6.1} converges unconditionally in the normed space $C^{q}_{\mathrm{b}}(\mathbb{R}^{n})$ on the function class $H^{\varphi}(\mathbb{R}^{n})$ if and only if
\begin{equation}\label{f6.16}
\int\limits_{1}^{\infty}\frac{t^{2q+n-1}}{\varphi^2(t)}\,dt<\infty.
\end{equation}
\end{theorem}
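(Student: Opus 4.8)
The plan is to establish necessity by a soft functional-analytic argument and sufficiency by reduction to the abstract Theorem~\ref{th6.2}, after replacing the merely normal operator $L$ by a genuine positive definite elliptic operator built from it.

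\emph{Necessity.} Suppose the expansion converges unconditionally in $C^{q}_{\mathrm{b}}(\mathbb{R}^{n})$ on $H^{\varphi}(\mathbb{R}^{n})$. Fix $f\in H^{\varphi}(\mathbb{R}^{n})$. By Definitions~\ref{def6.7} and~\ref{def6.1}, applied with $\delta=\widetilde{B}_{\lambda}$ for $\lambda$ large, one has $E(\widetilde{B}_{\lambda})f\in C^{q}_{\mathrm{b}}(\mathbb{R}^{n})$ and $\|f-E(\widetilde{B}_{\lambda})f\|_{C,q,\mathbb{R}^{n}}\to0$; in particular $f\in C^{q}_{\mathrm{b}}(\mathbb{R}^{n})$. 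Hence $H^{\varphi}(\mathbb{R}^{n})\subseteq C^{q}_{\mathrm{b}}(\mathbb{R}^{n})$ as sets, and since both spaces are continuously embedded in $\mathcal{S}'(\mathbb{R}^{n})$, the closed graph theorem gives the continuous embedding $H^{\varphi}(\mathbb{R}^{n})\hookrightarrow C^{q}_{\mathrm{b}}(\mathbb{R}^{n})$. It remains to invoke the classical fact that this embedding is equivalent to~\eqref{f6.16} (see, e.g., \cite[Section~2.2]{Hermander63}): for $|\alpha|=q$ and $x_{0}\in\mathbb{R}^{n}$, the functional $w\mapsto\partial^{\alpha}w(x_{0})$ is bounded on $H^{\varphi}(\mathbb{R}^{n})$, and computing its norm in the Fourier picture (where $\|\cdot\|_{\varphi,\mathbb{R}^{n}}$ is the weighted $L_{2}$-norm with weight $\varphi(\langle\xi\rangle)$) shows this holds iff $\xi\mapsto\xi^{\alpha}/\varphi(\langle\xi\rangle)$ lies in $L_{2}(\mathbb{R}^{n})$; passing to polar coordinates and using $\varphi(\langle\xi\rangle)\asymp\varphi(|\xi|)$ for $|\xi|\ge1$ (since $\varphi\in\mathrm{OR}$), this last condition is exactly~\eqref{f6.16}.

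\emph{Sufficiency.} Assume~\eqref{f6.16}. Since the exponent $2q+n-1$ is nonnegative, the integrability forces $\varphi(t)\to\infty$, whence $\inf_{t\ge1}\varphi(t)>0$; moreover~\eqref{f6.16} yields the continuous embedding $H^{\varphi}(\mathbb{R}^{n})\hookrightarrow C^{q}_{\mathrm{b}}(\mathbb{R}^{n})$ by the characterization just recalled. Next, consider $D:=I+L^{*}L$, where $L^{*}$ is the Hilbert-space adjoint; since $L\in\Psi^{m}(\mathbb{R}^{n})$ is uniformly elliptic, its formal adjoint lies in $\Psi^{m}(\mathbb{R}^{n})$, so $D$ is the $L_{2}(\mathbb{R}^{n})$-realization of a uniformly elliptic PsDO of order $2m$ satisfying $(Dw,w)_{\mathbb{R}^{n}}=\|w\|_{\mathbb{R}^{n}}^{2}+\|Lw\|_{\mathbb{R}^{n}}^{2}\ge\|w\|_{\mathbb{R}^{n}}^{2}$; thus $D$ is positive definite and self-adjoint with $\mathrm{Dom}\,D=H^{2m}(\mathbb{R}^{n})$ and $\mathrm{Spec}\,D\subseteq[1,\infty)$, i.e.\ it is the operator of Remark~\ref{rem4.8} (with the order there equal to $2m$). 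Because $L$ is normal, $D$, and hence $B:=D^{1/(2m)}\ge I$ and every bounded Borel function of $B$, commutes with $L$ and with $E(\delta)$. Put $\chi(\lambda):=\varphi(\lambda^{1/(2m)})$; then $\chi\in\mathrm{OR}$ and $\chi(D)=\varphi(B)$, so, since $H^{\varphi}_{B}(\mathbb{R}^{n})=H^{\chi}_{D}(\mathbb{R}^{n})$, Remark~\ref{rem4.8} applied to $D$ gives $H^{\varphi}_{B}(\mathbb{R}^{n})=H^{\chi_{2m}}(\mathbb{R}^{n})=H^{\varphi}(\mathbb{R}^{n})$ up to equivalence of norms, because $\chi_{2m}(t)=\chi(t^{2m})=\varphi(t)$.

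Now set $R:=\varphi(B)^{-1}$ and $S:=I$. Since $\varphi(B)$ is self-adjoint with spectrum bounded below away from $0$, the operator $R$ is bounded on $L_{2}(\mathbb{R}^{n})$, commutes with $L$, and $R(L_{2}(\mathbb{R}^{n}))=\mathrm{Dom}\,\varphi(B)=H^{\varphi}_{B}(\mathbb{R}^{n})=H^{\varphi}(\mathbb{R}^{n})$; moreover, for every $f\in L_{2}(\mathbb{R}^{n})$, $\|Rf\|_{C,q,\mathbb{R}^{n}}\le c\,\|Rf\|_{\varphi,\mathbb{R}^{n}}\asymp\|\varphi(B)Rf\|_{\mathbb{R}^{n}}=\|f\|_{\mathbb{R}^{n}}$, so $R$ is bounded from $L_{2}(\mathbb{R}^{n})$ to $C^{q}_{\mathrm{b}}(\mathbb{R}^{n})$, i.e.~\eqref{f6.4} holds for $R$. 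Theorem~\ref{th6.2} then shows that the spectral expansion~\eqref{f6.1} converges unconditionally in $C^{q}_{\mathrm{b}}(\mathbb{R}^{n})$ at every $f\in RS(L_{2}(\mathbb{R}^{n}))=H^{\varphi}(\mathbb{R}^{n})$, which is exactly the asserted convergence on the class $H^{\varphi}(\mathbb{R}^{n})$; taking instead $S=\eta(L)$ with a bounded Borel $\eta$ vanishing at infinity would, via~\eqref{f6.5}, also furnish a quantitative rate.

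I expect the main obstacle to lie in the sufficiency half, specifically in the device of passing from the merely normal $L$ to the self-adjoint positive definite elliptic operator $D=I+L^{*}L$ (using normality to secure commutativity with $L$ and $E(\delta)$) and in correctly identifying $H^{\varphi}_{B}(\mathbb{R}^{n})$ with $H^{\varphi}(\mathbb{R}^{n})$ through Remark~\ref{rem4.8}; once that identification and the boundedness $R\colon L_{2}(\mathbb{R}^{n})\to C^{q}_{\mathrm{b}}(\mathbb{R}^{n})$ are in hand, the conclusion is an immediate application of Theorem~\ref{th6.2}. On the necessity side the only non-formal ingredient is the classical equivalence of~\eqref{f6.16} with the embedding $H^{\varphi}(\mathbb{R}^{n})\hookrightarrow C^{q}_{\mathrm{b}}(\mathbb{R}^{n})$.
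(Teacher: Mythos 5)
Your proposal is correct and follows essentially the same route as the paper: for sufficiency, the paper also sets $A:=I+L^{*}L$, takes $R=(1/\chi)(A)$ with $\chi(t)=\varphi(t^{1/(2m)})$ (which is exactly your $\varphi(B)^{-1}$ for $B=A^{1/(2m)}$) and $S=I$, identifies $R(L_{2}(\mathbb{R}^{n}))=H^{\chi}_{A}=H^{\varphi}(\mathbb{R}^{n})$ via the analogue of Remark~\ref{rem4.8}, and applies Theorem~\ref{th6.2}; for necessity it likewise reduces to the embedding $H^{\varphi}(\mathbb{R}^{n})\subseteq C^{q}_{\mathrm{b}}(\mathbb{R}^{n})$ and its equivalence with \eqref{f6.16} (Proposition~\ref{prop6.12}, which you re-derive rather than cite). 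The only cosmetic differences are your direct norm estimate for $R:L_{2}(\mathbb{R}^{n})\to C^{q}_{\mathrm{b}}(\mathbb{R}^{n})$ in place of the paper's appeal to Remark~\ref{rem6.3}, and the implicit use of $\varphi\in\mathrm{OR}$ (Lemma~\ref{lem6.13}) when you assert that \eqref{f6.16} forces $\inf\varphi>0$.
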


\begin{remark}\label{rem6.9}
If we replace the lover limit $1$ in \eqref{f6.16} with an arbitrary number $k>1$, we will obtain an equivalent condition on the function $\varphi\in\mathrm{OR}$. This is due to the fact that every function $\varphi\in\mathrm{OR}$ is bounded together with $1/\varphi$ on each compact interval $[1,k]$ where $k>1$. This follows from property \eqref{f2.3}, in which we put $t=1$.
\end{remark}

The next result allows us to estimate the degree of the convergence stipulated by Theorem~\ref{th6.8}.

\begin{theorem}\label{th6.10}
Let $0\leq q\in\mathbb{Z}$ and $\phi_{1},\phi_{2}\in\mathrm{OR}$. Suppose that $\phi_{1}(t)\to\infty$ as $t\to\infty$ and that
\begin{equation}\label{f6.17}
\int\limits_{1}^{\infty}\frac{t^{2q+n-1}}{\phi_{2}^{2}(t)}\,dt<\infty.
\end{equation}
Consider the function $\varphi:=\phi_{1}\phi_{2}$, which evidently belongs to $\mathrm{OR}$ and satisfies \eqref{f6.16}. Then the degree of the convergence of the spectral expansion \eqref{f6.1} in the normed space $C^{q}_{\mathrm{b}}(\mathbb{R}^{n})$ on the class $H^{\varphi}(\mathbb{R}^{n})$ admits the estimate
\begin{equation}\label{f6.18}
\|f-E(\widetilde{B}_{\lambda})f\|_{C,q,\mathbb{R}^{n}}\leq
c\cdot\|f\|_{\varphi,\mathbb{R}^{n}}\cdot
\sup\bigl\{(\phi_{1}(t))^{-1}:t\geq\langle\lambda\rangle^{1/m}\bigr\}
\cdot\theta_{f}(\lambda)
\end{equation}
for every function $f\in H^{\varphi}(\mathbb{R}^{n})$ and each number $\lambda>0$. Here, $c$ is a certain positive number that does not depend on $f$ and $\lambda$, and $\theta_{f}(\lambda)$ is a decreasing function of $\lambda$ such that $\nobreak{0\leq\theta_{f}(\lambda)\leq1}$ whenever $\lambda>0$ and that $\theta_{f}(\lambda)\to0$ as $\lambda\to\infty$.
\end{theorem}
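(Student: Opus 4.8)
The plan is to deduce the estimate \eqref{f6.18} from the abstract Theorem~\ref{th6.2}, applied in the Hilbert space $H:=L_{2}(\mathbb{R}^{n})$ with the normed space $N:=C^{q}_{\mathrm{b}}(\mathbb{R}^{n})$ (both regarded inside $\mathcal{S}'(\mathbb{R}^{n})$) and with a carefully chosen pair of bounded operators commuting with~$L$. The key auxiliary object will be the positive definite self-adjoint operator $A:=(I+L^{*}L)^{1/(2m)}$ in $L_{2}(\mathbb{R}^{n})$: since $L$ is normal, $A=g(L)$ with $g(z):=(1+|z|^{2})^{1/(2m)}=\langle z\rangle^{1/m}$, so $A$ commutes with $L$ and with every spectral projector $E(\delta)$, and $\mathrm{Spec}\,A\subseteq[1,\infty)$. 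First I would verify that $A$ is an operator of the kind considered in Subsection~\ref{sec4.1}: $L^{*}L$ is a uniformly elliptic nonnegative PsDO of order $2m$ on $\mathbb{R}^{n}$, whence $A$ is a uniformly elliptic PsDO of the first order, and $(Aw,w)_{\mathbb{R}^{n}}=\|A^{1/2}w\|_{\mathbb{R}^{n}}^{2}\geq\|w\|_{\mathbb{R}^{n}}^{2}$ because $A\geq I$. Theorem~\ref{th4.1} then yields $H^{\chi}_{A}(\mathbb{R}^{n})=H^{\chi}(\mathbb{R}^{n})$ up to equivalence of norms for every $\chi\in\mathrm{OR}$; in particular $\|\varphi(A)f\|_{\mathbb{R}^{n}}\asymp\|f\|_{\varphi,\mathbb{R}^{n}}$ on $H^{\varphi}(\mathbb{R}^{n})$, and $H^{\varphi}(\mathbb{R}^{n})=\mathrm{Dom}\,\varphi(A)$.

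Next I would factor $1/\varphi=(1/\phi_{1})\cdot(1/\phi_{2})$ and set $R:=(1/\phi_{2})(A)$, $S:=(1/\phi_{1})(A)$, and $g:=\varphi(A)f$ for a given $f\in H^{\varphi}(\mathbb{R}^{n})$. Since $\phi_{1}\to\infty$ by hypothesis and $\phi_{2}\to\infty$ by \eqref{f6.17} (both functions being positive and bounded below on compact intervals, as they lie in $\mathrm{OR}$), $R$ and $S$ are bounded on $L_{2}(\mathbb{R}^{n})$, and, being functions of $A$ and hence of $L$, they commute with $L$; moreover $RS=(1/\varphi)(A)$, so $f=RSg$ and $\|g\|_{\mathbb{R}^{n}}\leq c_{1}\|f\|_{\varphi,\mathbb{R}^{n}}$ for some $c_{1}>0$ independent of~$f$. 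The three factors occurring in Theorem~\ref{th6.2} will then be controlled as follows. For $\|R\|_{H\to N}$: the identity $\|\phi_{2}(A)Rh\|_{\mathbb{R}^{n}}=\|h\|_{\mathbb{R}^{n}}$ shows that $R$ maps $L_{2}(\mathbb{R}^{n})$ isometrically onto $H^{\phi_{2}}_{A}(\mathbb{R}^{n})=H^{\phi_{2}}(\mathbb{R}^{n})$, and a Fourier-side Cauchy--Schwarz bound of the form $\|w\|_{C,q,\mathbb{R}^{n}}\leq\mathrm{const}\cdot\bigl(\int_{\mathbb{R}^{n}}\langle\xi\rangle^{2q}\phi_{2}^{-2}(\langle\xi\rangle)\,d\xi\bigr)^{1/2}\|w\|_{\phi_{2},\mathbb{R}^{n}}$, whose integral is finite precisely by \eqref{f6.17}, gives the continuous embedding $H^{\phi_{2}}(\mathbb{R}^{n})\hookrightarrow C^{q}_{\mathrm{b}}(\mathbb{R}^{n})$; hence $\|R\|_{H\to N}\leq c_{2}$. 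For $\|S(I-E(\widetilde{B}_{\lambda}))\|_{H\to H}$: writing $S=\eta(L)$ with the bounded Borel function $\eta(z):=(1/\phi_{1})(\langle z\rangle^{1/m})$ and invoking Remark~\ref{rem6.4}, this factor is at most $\sup\{|\eta(z)|:z\in(\mathrm{Spec}\,L)\setminus\widetilde{B}_{\lambda}\}\leq\sup\{(\phi_{1}(t))^{-1}:t\geq\langle\lambda\rangle^{1/m}\}$, since $|z|>\lambda$ forces $\langle z\rangle^{1/m}\geq\langle\lambda\rangle^{1/m}$.

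Finally I would apply Theorem~\ref{th6.2} with the above $R$, $S$ and with $f=RSg$, $\delta=\widetilde{B}_{\lambda}$: it furnishes a decreasing function $r_{g}(\delta)\in[0,1]$ of $\delta\in\mathcal{B}(\mathbb{C})$ (depending on $f$ only through $g$, and on neither $R$ nor $S$) with $r_{g}(\widetilde{B}_{\lambda})\to0$ as $\lambda\to\infty$, such that $\|f-E(\widetilde{B}_{\lambda})f\|_{C,q,\mathbb{R}^{n}}\leq\|R\|_{H\to N}\,\|g\|_{\mathbb{R}^{n}}\,\|S(I-E(\widetilde{B}_{\lambda}))\|_{H\to H}\,r_{g}(\widetilde{B}_{\lambda})$. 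Putting $c:=c_{1}c_{2}$ and $\theta_{f}(\lambda):=r_{g}(\widetilde{B}_{\lambda})$ and inserting the three bounds above yields \eqref{f6.18}; the unconditional convergence of the expansion in $C^{q}_{\mathrm{b}}(\mathbb{R}^{n})$ on $H^{\varphi}(\mathbb{R}^{n})$ is part of Theorem~\ref{th6.2} (and is also guaranteed by Theorem~\ref{th6.8}, since $\varphi=\phi_{1}\phi_{2}$ satisfies \eqref{f6.16}).

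I expect the only genuinely delicate point to be the first step, namely confirming that $A=(I+L^{*}L)^{1/(2m)}$ belongs to the class of operators of Subsection~\ref{sec4.1} — equivalently, that a positive real power of a positive uniformly elliptic PsDO is again a uniformly elliptic PsDO — so that Theorem~\ref{th4.1} applies and $H^{\chi}_{A}(\mathbb{R}^{n})=H^{\chi}(\mathbb{R}^{n})$. This rests on standard pseudodifferential calculus, and for classical $L$ it also follows from Seeley's theory recalled in Remark~\ref{rem4.8}; alternatively, one may reuse the construction already made in the proof of Theorem~\ref{th6.8}. Everything else is routine bookkeeping with the functional calculus of the normal operator $L$.
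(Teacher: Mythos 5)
Your proof is correct and follows essentially the same route as the paper's: both apply Theorem~\ref{th6.2} with $R$ and $S$ being the bounded functions $1/\phi_{2}$ and $1/\phi_{1}$ of the auxiliary positive operator built from $I+L^{\ast}L$, bound $\|R\|_{H\to N}$ via the H\"ormander-type embedding under \eqref{f6.17}, bound $\|S(I-E(\widetilde{B}_{\lambda}))\|_{H\to H}$ via Remark~\ref{rem6.4}, and set $\theta_{f}(\lambda):=r_{g}(\widetilde{B}_{\lambda})$. The only divergence is your choice of $A:=(I+L^{\ast}L)^{1/(2m)}$, which forces the (avoidable) question of whether a fractional power of a uniformly elliptic PsDO on $\mathbb{R}^{n}$ is again a first-order PsDO; the paper sidesteps this exactly as you suggest in your fallback, by keeping $A:=I+L^{\ast}L$ of order $2m$ and using the substituted functions $\chi_{j}(t):=\phi_{j}(t^{1/(2m)})$ together with formula \eqref{f4.20}, which yields literally the same operators $R$ and $S$ without any new pseudodifferential input.
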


As to \eqref{f6.18}, recall that $\langle\lambda\rangle:=(1+|\lambda|^{2})^{1/2}$.

\begin{remark}\label{rem6.11}
Suppose that a function $\varphi\in\mathrm{OR}$ satisfies \eqref{f6.16}; then it may be represented in the form $\varphi=\phi_{1}\phi_{2}$ for some functions $\phi_{1},\phi_{2}\in\mathrm{OR}$ subject to the hypotheses of Theorem~\ref{th6.10}. Indeed, considering the function
\begin{equation*}
\eta(t):= \int\limits_t^\infty\frac{\tau^{2q+n-1}}{\varphi^2(\tau)}\,d\tau<\infty
\quad\mbox{of}\quad t\geq1
\end{equation*}
and choosing a number $\varepsilon\in(0,1/2)$, we put $\phi_{1}(t):=\eta^{-\varepsilon}(t)$ and $\phi_{2}(t):=\varphi(t)\eta^{\varepsilon}(t)$ whenever $t\geq1$. Then
$\phi_{1}(t)\to\infty$ as $t\to\infty$, and
\begin{equation*}
\int\limits_1^\infty\frac{t^{2q+n-1}}{\phi_2^2(t)}\,dt=
\int\limits_1^\infty\frac{t^{2q+n-1}}{\varphi^2(t)\eta^{2\varepsilon}(t)}
\,dt=-\int\limits_1^\infty\frac{d\eta(t)}{\eta^{2\varepsilon}(t)}=
\int\limits^{\eta(1)}_0\frac{d\eta}{\eta^{2\varepsilon}}<\infty.
\end{equation*}
To show that $\phi_{1},\phi_{2}\in\mathrm{OR}$, it suffices to prove the inclusion $\eta\in\mathrm{OR}$. Since $\varphi\in\mathrm{OR}$,
there exist numbers $a>1$ and $c\geq1$ such that $c^{-1}\leq\varphi(\lambda\zeta)/\varphi(\zeta)\leq c$ for all $\zeta\geq1$ and $\lambda\in[1,a]$. Assuming $t\geq1$ and $1\leq\lambda\leq a$, we therefore get
\begin{equation*}
\eta(\lambda t)=\int\limits_{\lambda t}^\infty
\frac{\tau^{2q+n-1}}{\varphi^2(\tau)}\,d\tau=
\lambda^{2q+n}\int\limits_{t}^\infty
\frac{\zeta^{2q+n-1}}{\varphi^2(\lambda\zeta)}\,d\zeta\leq
c^{2}\lambda^{2q+n}\int\limits_{t}^\infty
\frac{\zeta^{2q+n-1}}{\varphi^2(\zeta)}\,d\zeta
\leq c^2a^{2q+n}\eta(t)
\end{equation*}
and
\begin{equation*}
\eta(\lambda t)=\lambda^{2q+n}\int\limits_{t}^\infty
\frac{\zeta^{2q+n-1}}{\varphi^2(\lambda\zeta)}\,d\zeta\geq
c^{-2}\lambda^{2q+n}\int\limits_{t}^\infty
\frac{\zeta^{2q+n-1}}{\varphi^2(\zeta)}\,d\zeta\geq c^{-2}\eta(t);
\end{equation*}
i.e. $\eta\in\mathrm{OR}$.
\end{remark}

Before we prove Theorems \ref{th6.8} and \ref{th6.10}, we will illustrate them with three examples. As above, $0\leq q\in\mathbb{Z}$. As in Theorem~\ref{th6.10}, we let $c$ denote a positive number that does not depend on $f$ and $\lambda$.

\begin{example}\label{ex6.2.1}
Let us restrict ourselves to the Sobolev spaces $H^{s}(\mathbb{R}^{n})$, with $s\in\mathbb{R}$. Owing to Theorem~\ref{th6.8}, the spectral expansion \eqref{f6.1} converges unconditionally in $C^{q}_{\mathrm{b}}(\mathbb{R}^{n})$ on the class $H^{s}(\mathbb{R}^{n})$ if and only if $s>q+n/2$. Let $s>q+n/2$, and put $r:=s-q-n/2>0$. If  $0<\varepsilon<r/m$, then the degree of this convergence admits the following estimate:
\begin{equation*}
\|f-E(\widetilde{B}_{\lambda})f\|_{C,q,\mathbb{R}^{n}}\leq
c\,\|f\|_{s,\mathbb{R}^{n}}\langle\lambda\rangle^{\varepsilon-r/m}
\end{equation*}
for all $f\in H^{s}(\mathbb{R}^{n})$ and $\lambda>0$. Here,  $\|\cdot\|_{s,\mathbb{R}^{n}}$ is the norm in $H^{s}(\mathbb{R}^{n})$. This estimate follows from Theorem~\ref{th6.10}, in which we put
$\phi_{1}(t):=t^{r-m\varepsilon}$ and $\phi_{2}(t):=t^{s-r+m\varepsilon}$ for every $t\geq1$. Choosing a number $\varepsilon>0$ arbitrarily and putting
\begin{equation}\label{f6.18b}
\phi_{1}(t):=t^{r}\log^{-\varepsilon-1/2}(1+t)\quad\mbox{and}\quad
\phi_{2}(t):=t^{s-r}\log^{\varepsilon+1/2}(1+t)
\end{equation}
for every $t\geq1$ in this theorem, we obtain the sharper estimate
\begin{equation*}
\|f-E(\widetilde{B}_{\lambda})f\|_{C,q,\mathbb{R}^{n}}\leq
c\,\|f\|_{s,\mathbb{R}^{n}}\langle\lambda\rangle^{-r/m}
\log^{\varepsilon+1/2}(1+\langle\lambda\rangle)
\end{equation*}
for the same $f$ and $\lambda$.
\end{example}

Using the generalized Sobolev spaces $H^{\varphi}(\mathbb{R}^{n})$, with $\varphi\in\mathrm{OR}$, we may establish the unconditional convergence of \eqref{f6.1} in $C^{q}_{\mathrm{b}}(\mathbb{R}^{n})$ at some functions
\begin{equation*}
f\notin H^{q+n/2+}(\mathbb{R}^{n}):=
\bigcup_{s>q+n/2}H^{s}(\mathbb{R}^{n})
\end{equation*}
and evaluate its degree. (Note that this union is narrower than $H^{q+n/2}(\mathbb{R}^{n})$.)

\begin{example}\label{ex6.2.2}
Choosing a number $\varrho>0$ arbitrarily and putting
\begin{equation}\label{f6.19}
\varphi(t):=t^{q+n/2}\log^{\varrho+1/2}(1+t)
\quad\mbox{for every}\quad t\geq1,
\end{equation}
we conclude by Theorem~\ref{th6.8} that the spectral expansion \eqref{f6.1} converges unconditionally in $C^{q}_{\mathrm{b}}(\mathbb{R}^{n})$ on the class $H^{\varphi}(\mathbb{R}^{n})$. This class is evidently broader than $H^{q+n/2+}(\mathbb{R}^{n})$. If $0<\varepsilon<\varrho$, then the degree of this convergence admits the estimate
\begin{equation*}
\|f-E(\widetilde{B}_{\lambda})f\|_{C,q,\mathbb{R}^{n}}\leq
c\,\|f\|_{\varphi,\mathbb{R}^{n}}
\log^{\varepsilon-\varrho}(1+\langle\lambda\rangle)
\end{equation*}
for all $f\in H^{\varphi}(\mathbb{R}^{n})$ and $\lambda>0$. This estimate follows from Theorem~\ref{th6.10}, in which we represent $\varphi$ as the product of the functions
\begin{equation*}
\phi_{1}(t):=\log^{\varrho-\varepsilon}(1+t)\quad\mbox{and}\quad
\phi_{2}(t):=t^{q+n/2}\log^{\varepsilon+1/2}(1+t).
\end{equation*}
\end{example}

Using iterated logarithms, we may obtain weaker sufficient conditions for the unconditional convergence of \eqref{f6.1} in $C^{q}_{\mathrm{b}}(\mathbb{R}^{n})$. The next example involves the double logarithm.

\begin{example}\label{ex6.2.3}
Choose a number $\varrho>0$ arbitrarily, and consider the function
\begin{equation}\label{f6.20}
\varphi(t):=t^{q+n/2}\,(\log(1+t))^{1/2}\,(\log\log(2+t))^{\varrho+1/2}
\quad\mbox{of}\quad t\geq1.
\end{equation}
According to Theorem~\ref{th6.8}, the spectral expansion \eqref{f6.1} converges unconditionally in $C^{q}_{\mathrm{b}}(\mathbb{R}^{n})$ on the class $H^{\varphi}(\mathbb{R}^{n})$. If $0<\varepsilon<\varrho$, then the degree of this convergence admits the estimate
\begin{equation*}
\|f-E(\widetilde{B}_{\lambda})f\|_{C,q,\mathbb{R}^{n}}\leq
c\,\|f\|_{\varphi,\mathbb{R}^{n}}
\bigl(\log\log(2+\langle\lambda\rangle)\bigr)^{\varepsilon-\varrho}
\end{equation*}
for all $f\in H^{\varphi}(\mathbb{R}^{n})$ and $\lambda>0$. The estimate follows from Theorem~\ref{th6.10} provided that we represent $\varphi$ as the product of the functions $\phi_{1}(t):=(\log\log(2+t))^{\varrho-\varepsilon}$ and
\begin{equation*}
\phi_{2}(t):=t^{q+n/2}(\log(1+t))^{1/2}
(\log\log(2+t))^{\varepsilon+1/2}.
\end{equation*}
\end{example}

Let us turn to the proofs of Theorems \ref{th6.8} and \ref{th6.10}. The  proofs are based on the following version of H\"ormander's embedding theorem \cite[Theorem 2.2.7]{Hermander63}:

\begin{proposition}\label{prop6.12}
Let $0\leq q\in\mathbb{Z}$ and $\varphi\in\mathrm{OR}$. Then condition \eqref{f6.16} implies the continuous embedding $H^{\varphi}(\mathbb{R}^n)\hookrightarrow C^{q}_\mathrm{b}(\mathbb{R}^n)$. Conversely, if
\begin{equation}\label{f6.21}
\{w\in H^{\varphi}(\mathbb{R}^n): \mathrm{supp}\,w\subset G\}\subseteq C^q(\mathbb{R}^n)
\end{equation}
for an open nonempty set $G\subset\mathbb{R}^n$, then condition \eqref{f6.16} is satisfied.
\end{proposition}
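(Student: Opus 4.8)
The plan is to prove the two implications separately: the forward one by a direct Fourier–analytic estimate (this is essentially H\"ormander's theorem, once one notes that $\varphi(\langle\,\cdot\,\rangle)$ is a temperate weight by virtue of \eqref{f2.3}), and the converse one by a closed–graph argument fed with an explicitly constructed family of test functions.

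\emph{Sufficiency.} Assume \eqref{f6.16}. Given $w\in H^{\varphi}(\mathbb{R}^{n})$ and a multi-index $\alpha$ with $|\alpha|\le q$, I would bound, via the Cauchy--Schwarz inequality and $|\xi^{\alpha}|\le\langle\xi\rangle^{q}$,
\[
\int_{\mathbb{R}^{n}}|\xi^{\alpha}\widehat{w}(\xi)|\,d\xi\le
\Bigl(\int_{\mathbb{R}^{n}}\frac{\langle\xi\rangle^{2q}}{\varphi^{2}(\langle\xi\rangle)}\,d\xi\Bigr)^{1/2}\|w\|_{\varphi,\mathbb{R}^{n}}.
\]
The integral over $\{|\xi|\le1\}$ is finite because $1/\varphi$ is bounded on $[1,\sqrt2\,]$ (put $t=1$ in \eqref{f2.3}); on $\{|\xi|\ge1\}$ one passes to polar coordinates and uses the elementary consequence of $\varphi\in\mathrm{OR}$ that $\varphi(\sqrt{1+\rho^{2}})$ is comparable to $\varphi(\rho)$ for $\rho\ge1$ (again from \eqref{f2.3}, since $\sqrt{1+\rho^{2}}=\lambda\rho$ with $\lambda\in[1,\sqrt2\,]$), which reduces that part to a constant multiple of $\int_{1}^{\infty}t^{2q+n-1}\varphi^{-2}(t)\,dt<\infty$. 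Hence $\xi^{\alpha}\widehat{w}\in L_{1}(\mathbb{R}^{n})$ with $\|\xi^{\alpha}\widehat{w}\|_{L_{1}}\le c\,\|w\|_{\varphi,\mathbb{R}^{n}}$, so $\partial^{\alpha}w$ coincides a.e.\ with a bounded continuous function and $\|\partial^{\alpha}w\|_{\infty}\le c\,\|w\|_{\varphi,\mathbb{R}^{n}}$; summing over $|\alpha|\le q$ gives $w\in C^{q}_{\mathrm b}(\mathbb{R}^{n})$ and $\|w\|_{C,q,\mathbb{R}^{n}}\le c\,\|w\|_{\varphi,\mathbb{R}^{n}}$, i.e.\ the asserted continuous embedding.

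\emph{Necessity.} Assume \eqref{f6.21}. Fix $x_{0}\in G$; by translation invariance of all spaces involved we may take $x_{0}=0$, and we choose a closed ball $K=\overline{B(0,r)}\subset G$. Put $H^{\varphi}_{K}:=\{w\in H^{\varphi}(\mathbb{R}^{n}):\mathrm{supp}\,w\subseteq K\}$. Since $H^{\varphi}(\mathbb{R}^{n})\hookrightarrow\mathcal{S}'(\mathbb{R}^{n})$ continuously and the support constraint is stable under convergence in $\mathcal{S}'(\mathbb{R}^{n})$, $H^{\varphi}_{K}$ is a closed, hence Hilbert, subspace of $H^{\varphi}(\mathbb{R}^{n})$. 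By \eqref{f6.21} every $w\in H^{\varphi}_{K}$ lies in $C^{q}(\mathbb{R}^{n})$, and being compactly supported it lies in $C^{q}_{\mathrm b}(\mathbb{R}^{n})$; the inclusion $\iota\colon H^{\varphi}_{K}\to C^{q}_{\mathrm b}(\mathbb{R}^{n})$ has closed graph, because convergence in either of the two norms implies convergence in $\mathcal{S}'(\mathbb{R}^{n})$. The closed graph theorem then yields a constant $c>0$ with
\[
\sup_{|\alpha|\le q}|\partial^{\alpha}w(0)|\le\|w\|_{C,q,\mathbb{R}^{n}}\le c\,\|w\|_{\varphi,\mathbb{R}^{n}}\qquad\text{for all }w\in H^{\varphi}_{K}.
\]
Now suppose, for contradiction, that \eqref{f6.16} fails; the polar-coordinate reduction above then gives $\int_{\{|\xi|\ge1,\ \xi_{1}>0\}}|\xi|^{2q}\varphi^{-2}(\langle\xi\rangle)\,d\xi=\infty$. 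For $\lambda>1$ set $I_{\lambda}:=\int_{\{1\le|\xi|\le\lambda,\ \xi_{1}>0\}}|\xi|^{2q}\varphi^{-2}(\langle\xi\rangle)\,d\xi\to\infty$, let $F_{\lambda}:=I_{\lambda}^{-1/2}\,|\xi|^{q}\varphi^{-2}(\langle\xi\rangle)\,\mathbf{1}_{\{1\le|\xi|\le\lambda,\ \xi_{1}>0\}}\in L_{1}(\mathbb{R}^{n})$ (bounded, compactly supported), let $v_{\lambda}$ be the bounded continuous function with $\widehat{v_{\lambda}}=F_{\lambda}$, and put $w_{\lambda}:=\chi v_{\lambda}$, where $\chi\in C^{\infty}_{0}(\mathbb{R}^{n})$ satisfies $\mathrm{supp}\,\chi\subseteq K$ and $\chi\equiv1$ near $0$. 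Then $\|v_{\lambda}\|_{\varphi,\mathbb{R}^{n}}^{2}=\int\varphi^{2}(\langle\xi\rangle)|F_{\lambda}(\xi)|^{2}\,d\xi=1$, so $w_{\lambda}\in H^{\varphi}_{K}$ with $\|w_{\lambda}\|_{\varphi,\mathbb{R}^{n}}\le c_{\chi}$ by boundedness of multiplication by $\chi$ on $H^{\varphi}(\mathbb{R}^{n})$ (already used in Section~\ref{sec4}). On the other hand $\chi\equiv1$ near $0$ gives $\partial^{(q,0,\dots,0)}w_{\lambda}(0)=\partial^{(q,0,\dots,0)}v_{\lambda}(0)$, which equals a nonzero constant times $i^{q}\int\xi_{1}^{q}F_{\lambda}(\xi)\,d\xi$; evaluating this integral in polar coordinates — the angular factor $\int_{S^{n-1}\cap\{\omega_{1}>0\}}\omega_{1}^{q}\,d\sigma(\omega)$ is strictly positive — one finds $|\partial^{(q,0,\dots,0)}w_{\lambda}(0)|$ comparable to $I_{\lambda}^{1/2}\to\infty$, contradicting the displayed bound. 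Hence \eqref{f6.16} holds.

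The change of variables in polar coordinates and the estimate $\varphi(\sqrt{1+\rho^{2}})\asymp\varphi(\rho)$ (both immediate from \eqref{f2.3}), the Riemann--Lebesgue/inverse-Fourier step, and the standard closed-graph reasoning are all routine. The one genuinely delicate point is the choice of the test functions $w_{\lambda}$ in the converse: a single modulated bump $\chi(x)e^{ix\cdot\xi}$ does not suffice, since its $H^{\varphi}$-norm grows like a power of $|\xi|$ without detecting the integral in \eqref{f6.16}; the Fourier transform must be spread over a whole frequency shell with precisely the weight $|\xi|^{q}\varphi^{-2}(\langle\xi\rangle)$ (the extremizer of the Cauchy--Schwarz inequality appearing in the sufficiency part), restricted to a half-space $\{\xi_{1}>0\}$ to prevent cancellation in $\int\xi_{1}^{q}F_{\lambda}$, and then multiplied by a cutoff $\chi$ that restores compact support without altering the $q$-th derivative at the origin.
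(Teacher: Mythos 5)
Your proof is correct, but it follows a genuinely different route from the paper's. The paper reduces the whole proposition to H\"ormander's embedding theorem \cite[Theorem 2.2.7]{Hermander63} for the spaces $\mathcal{B}_{p,k}$: the only work done there is (a) to check that the weight $k(\xi)=\varphi(\langle\xi\rangle)$ is equivalent to a temperate weight $k_{1}\in\mathcal{K}$, so that $H^{\varphi}(\mathbb{R}^{n})=\mathcal{B}_{2,k_{1}}$ up to equivalence of norms, and (b) to verify, via the substitution $t=\sqrt{1+r^{2}}$ in polar coordinates, that the condition $(1+|\xi|)^{q}/k_{1}(\xi)\in L_{2}(\mathbb{R}^{n})$ is equivalent to \eqref{f6.16}. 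You instead prove both implications from scratch: the sufficiency by the Cauchy--Schwarz and Fourier-inversion argument (which is precisely H\"ormander's proof specialized to $p=2$), and the necessity by a closed-graph argument on the closed subspace of distributions supported in a fixed ball, fed with the extremizing densities $F_{\lambda}$ spread over a frequency half-shell. Your approach buys self-containedness: it never needs the weight class $\mathcal{K}$ or the equivalence $k\asymp k_{1}$, and the restriction to $\{\xi_{1}>0\}$ correctly prevents the cancellation that would otherwise kill $\int\xi_{1}^{q}F_{\lambda}$ for odd $q$; the normalization $\|v_{\lambda}\|_{\varphi,\mathbb{R}^{n}}=1$, the multiplier bound for the cutoff $\chi$ (legitimately borrowed from Section~\ref{sec4}), and the strict positivity of the angular factor all check out. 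What it costs is length: the converse construction essentially reproduces the content of the cited theorem. Both arguments ultimately rest on the same polar-coordinate identification of \eqref{f6.16} with the $n$-dimensional integrability condition and on the boundedness of $\varphi$ and $1/\varphi$ near $t=1$, which follows from~\eqref{f2.3}.
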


\begin{proof}
We previously recall the definition of the H\"ormander space $\mathcal{B}_{p,k}$, which the embedding theorem deals with. Let $1\leq p\leq\infty$, and let a function $k:\mathbb{R}^{n}\to(0,\infty)$ satisfy the following condition: there exist positive numbers $c$ and $\ell$ such that
\begin{equation}\label{f6.22}
k(\xi+\zeta)\leq(1+c|\xi|)^{\ell}\,k(\zeta)\quad\mbox{for all}\quad
\xi,\zeta\in\mathbb{R}^{n}
\end{equation}
(the class of all such functions $k$ is denoted by $\mathcal{K}$). According to \cite[Definition 2.2.1]{Hermander63}, the complex linear space $\mathcal{B}_{p,k}$ consists of all distributions $w\in\mathcal{S}'(\mathbb{R}^{n})$ that their Fourier transform $\widehat{w}$ is locally Lebesgue integrable over $\mathbb{R}^{n}$ and that the product $k\widehat{w}$ belongs to the Lebesgue space $L_{p}(\mathbb{R}^{n})$. The space $\mathcal{B}_{p,k}$ is endowed with the norm $\|k\widehat{w}\|_{L_{p}(\mathbb{R}^{n})}$ and is complete with respect to it.

According to \cite[Theorem 2.2.7]{Hermander63} and its proof, the condition
\begin{equation}\label{f6.23}
\frac{(1+|\xi|)^{q}}{k(\xi)}\in L_{p'}(\mathbb{R}^{n})
\end{equation}
implies the inclusion $\mathcal{B}_{p,k}\subset C^{q}_\mathrm{b}(\mathbb{R}^n)$; here, as usual, the conjugate parameter $p'\in[1,\infty]$ is defined by $1/p+1/p'=1$. Moreover, if the set $\{w\in \mathcal{B}_{p,k}:\mathrm{supp}\,w\subset G\}$ lies in $C^{q}(\mathbb{R}^n)$ for an open nonempty set $G\subset\mathbb{R}^n$, then condition \eqref{f6.23} is satisfied. Note that the inclusion $\mathcal{B}_{p,k}\subset C^{q}_\mathrm{b}(\mathbb{R}^n)$ is continuous because its components are continuously embedded in a Hausdorff space, e.g. in $\mathcal{S}'(\mathbb{R}^{n})$.

The Hilbert space $H^{\varphi}(\mathbb{R}^n)$ is the H\"ormander space $\mathcal{B}_{2,k}$ provided that $k(\xi)=\varphi(\langle\xi\rangle)$ for every $\xi\in\mathbb{R}^n$ and that $k$ satisfies \eqref{f6.22}. Owing to \cite[Lemma~2.7]{MikhailetsMurach14}, the function $k(\xi):=\varphi(\langle\xi\rangle)$ of $\xi\in\mathbb{R}^n$ satisfies a weaker condition than \eqref{f6.22}; namely, there exist positive numbers $c_{0}$ and $\ell_0$ such that
\begin{equation*}
k(\xi+\zeta)\leq c_{0}(1+|\xi|)^{\ell_0}k(\zeta)
\quad\mbox{for all}\quad \xi,\zeta\in\mathbb{R}^{n}.
\end{equation*}
However, there exists a function $k_{1}\in\mathcal{K}$ that both  functions $k/k_{1}$ and $k_{1}/k$ are bounded on $\mathbb{R}^n$ (see \cite[the remark at the end of Section~2.1]{Hermander63}). Hence, the spaces $H^{\varphi}(\mathbb{R}^n)$ and $\mathcal{B}_{2,k_1}$ are equal with equivalence of norms. Thus, Proposition~\ref{prop6.12} holds true if we change \eqref{f6.16} for the condition $(1+|\xi|)^{q}/k_{1}(\xi)\in L_{2}(\mathbb{R}^{n})$. The latter is equivalent to
\begin{equation}\label{f6.24}
\int\limits_{\mathbb{R}^{n}}\,
\frac{\langle\xi\rangle^{2q}d\xi}{\varphi^{2}(\langle\xi\rangle)}<\infty
\end{equation}
It remains to show that $\eqref{f6.16}\Leftrightarrow\eqref{f6.24}$.

Passing to spherical coordinates with $r:=|\xi|$ and changing variables $t=\sqrt{1+r^{2}}$, we obtain
\begin{align*}
\int\limits_{\mathbb{R}^{n}}\,
\frac{\langle\xi\rangle^{2q}d\xi}{\varphi^{2}(\langle\xi\rangle)}&=
c_1\int\limits_{0}^{\infty}\,
\frac{(1+r^{2})^{q}\,r^{n-1}dr}{\varphi^{2}(\sqrt{1+r^{2}}\,)}=
c_1\int\limits_{1}^{\infty}\,
\frac{t^{2q+1}(t^{2}-1)^{n/2-1}dt}{\varphi^{2}(t)}\\
&=c_2+c_1\int\limits_{2}^{\infty}
\frac{t^{2q+1}(t^{2}-1)^{n/2-1}dt}{\varphi^{2}(t)}.
\end{align*}
Here, $c_1:=n\,\mathrm{mes}\,\widetilde{B}_{1}$, with the second factor being the volume of the unit ball in $\mathbb{R}^{n}$, and
$$
c_2:=c_1\int\limits_{1}^{2}
\frac{t^{2q+1}(t^{2}-1)^{n/2-1}dt}{\varphi^{2}(t)}<\infty
$$
because the function $1/\varphi$ is bounded on $[1,2]$ and because $n/2-1>-1$. Hence,
\begin{align*}
\eqref{f6.24}\,\Longleftrightarrow
\int\limits_{2}^{\infty}
\frac{t^{2q+1}(t^{2}-1)^{n/2-1}dt}{\varphi^{2}(t)}<\infty\,
\Longleftrightarrow
\int\limits_{2}^{\infty}\frac{t^{2q+n-1}dt}{\varphi^{2}(t)}<\infty
\,\Longleftrightarrow\,\eqref{f6.16}.
\end{align*}
\end{proof}

We systematically use the following auxiliary result:

\begin{lemma}\label{lem6.13}
Suppose that a function $\chi\in\mathrm{OR}$ is integrable over $[1,\infty)$. Then $\chi$ is bounded on $[1,\infty)$, and $t\chi(t)\to0$ as $t\to\infty$.
\end{lemma}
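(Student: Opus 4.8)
The plan is to extract everything from the single defining inequality of the class $\mathrm{OR}$, combined with the elementary fact that the tail of a nonnegative integrable function vanishes at infinity. Fix numbers $a>1$ and $c\ge1$ (depending on $\chi$) such that $c^{-1}\le\chi(\lambda t)/\chi(t)\le c$ for all $t\ge1$ and $\lambda\in[1,a]$. Rewriting the left-hand inequality for $\tau=\lambda t$ with $\lambda=\tau/t\in[1,a]$, I obtain the pointwise comparison $\chi(t)\le c\,\chi(\tau)$ valid whenever $t\ge1$ and $\tau\in[t,at]$. This comparison is the only structural input I need; the rest is one integration.

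First I integrate $\chi(t)\le c\,\chi(\tau)$ over $\tau\in[t,at]$, whose length is $(a-1)t$. This gives $(a-1)\,t\,\chi(t)\le c\int_t^{at}\chi(\tau)\,d\tau\le c\int_t^\infty\chi(\tau)\,d\tau$ for every $t\ge1$. Since $\chi$ is Borel measurable, positive and integrable over $[1,\infty)$, the tail function $t\mapsto\int_t^\infty\chi(\tau)\,d\tau$ is finite, nonincreasing, and tends to $0$ as $t\to\infty$ (by dominated, or monotone, convergence). For the boundedness assertion I bound the left side below by $(a-1)\chi(t)$ using $t\ge1$ and the right side above by $c\int_1^\infty\chi(\tau)\,d\tau$, which yields $\chi(t)\le\frac{c}{a-1}\int_1^\infty\chi(\tau)\,d\tau$ for all $t\ge1$, i.e.\ $\chi$ is bounded on $[1,\infty)$. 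For the decay assertion I keep the factor $t$ intact: from $t\,\chi(t)\le\frac{c}{a-1}\int_t^\infty\chi(\tau)\,d\tau$ and the fact that the right-hand side tends to $0$, I conclude $t\chi(t)\to0$ as $t\to\infty$.

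I do not expect a genuine obstacle here: both statements fall out of the same one-line estimate. The only points meriting an explicit remark are that $\chi$ being positive and integrable makes all the integrals above well defined and makes the tail integral monotone and convergent to $0$, and that $t\ge1$ is what lets the factor $t$ be discarded (resp.\ retained) to get boundedness (resp.\ the sharper $o(1/t)$ decay). One could alternatively prove boundedness by contradiction—if $\chi(t_k)\to\infty$ along some $t_k\to\infty$, then $\int_{t_k}^{at_k}\chi\ge c^{-1}(a-1)\chi(t_k)\to\infty$ contradicts the vanishing of the tail—but the direct argument above is shorter and subsumes both claims.
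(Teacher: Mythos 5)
Your proof is correct and rests on exactly the same key estimate as the paper's: the $\mathrm{OR}$ comparison $\chi(t)\le c\,\chi(\tau)$ for $\tau\in[t,at]$, integrated over a multiplicative interval of length $(a-1)t$ to bound $t\chi(t)$ by a constant times a tail integral of $\chi$. The only difference is packaging — the paper argues by contradiction along the geometric partition $\{[a^{k},a^{k+1}]\}$ and then deduces boundedness separately, whereas you integrate directly and obtain the explicit bound $t\chi(t)\le\frac{c}{a-1}\int_{t}^{\infty}\chi(\tau)\,d\tau$, which yields both assertions at once.
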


\begin{proof}
Let us prove by contradiction that $t\chi(t)\to0$ as $t\to\infty$. Assume the contrary; i.e., there exists a number $\varepsilon>0$ and a sequence $(t_{j})_{j=1}^{\infty}\subset[1,\infty)$ such that $t_{j}\to\infty$ as $j\to\infty$ and that $t_{j}\,\chi(t_{j})\geq\varepsilon$ for each $j\geq1$. Since $\chi\in\mathrm{OR}$, there are numbers $a>1$ and $c\geq1$ such that $c^{-1}\leq\chi(\lambda\tau)/\chi(\tau)\leq c$ for all $\tau\geq1$ and $\lambda\in[1,a]$. Since  $\chi$ is integrable over $[1,\infty)$, we have
\begin{equation}\label{f6.25}
\sum_{k=0}^{\infty}\int\limits_{a^k}^{a^{k+1}}\chi(t)dt<\infty.
\end{equation}
Choosing an integer $j\geq1$ arbitrarily, we find an integer $k(j)\geq0$ such that $a^{k(j)}\leq t_{j}<a^{k(j)+1}$ and observe that $c^{-1}\leq\chi(t)/\chi(t_{j})\leq c$ whenever $t\in[a^{k(j)},a^{k(j)+1}]$. Hence,
\begin{equation*}
\int\limits_{a^{k(j)}}^{a^{k(j)+1}}\chi(t)dt\geq
\int\limits_{a^{k(j)}}^{a^{k(j)+1}}c^{-1}\chi(t_{j})dt\geq
c^{-1}\varepsilon\,t_{j}^{-1}(a^{k(j)+1}-a^{k(j)})>
c^{-1}\varepsilon(1-a^{-1})
\end{equation*}
for each integer $j\geq1$, which contradicts \eqref{f6.25} because $c^{-1}\varepsilon(1-a^{-1})>0$ and $k(j)\to\infty$ as $j\to\infty$. Thus, our assumption is wrong; i.e., $t\chi(t)\to0$ as $t\to\infty$. It follows from this that the function $\chi\in\mathrm{OR}$ is bounded on $[1,\infty)$ because it is bounded
on each compact subinterval of $[1,\infty)$.
\end{proof}

\begin{proof}[Proof of Theorem $\ref{th6.8}$] \emph{Sufficiency.} Assume that $\varphi$ satisfies \eqref{f6.16} and prove that the spectral expansion \eqref{f6.1} converges unconditionally in the normed space  $C^{q}_{\mathrm{b}}(\mathbb{R}^{n})$ on the class $H^{\varphi}(\mathbb{R}^{n})$. Note first that $H^{\varphi}(\mathbb{R}^{n})\subset L_{2}(\mathbb{R}^{n})$ because the function $1/\varphi$ is bounded on $[1,\infty)$; the latter property follows from \eqref{f6.16} due to Lemma~\ref{lem6.13}.
We put $A:=I+L^{\ast}L$ and observe that $A$ is a positive definite self-adjoint unbounded linear operator in the Hilbert space $H=L_{2}(\mathbb{R}^{n})$ and that $\mathrm{Spec}\,A\subseteq[1,\infty)$. Here, $I$ is the identity operator in $L_{2}(\mathbb{R}^{n})$. It follows from the theorem on composition of PsDOs \cite[Theorem~1.2.4]{Agranovich94} that $A\in\Psi^{2m}(\mathbb{R}^{n})$ is uniformly elliptic on $\mathbb{R}^{n}$. Consider the functions $\chi(t):=\varphi(t^{1/(2m)})$ of $t\geq1$ and $\omega(z):=(\chi(1+|z|^{2}))^{-1}$ of $z\in\mathbb{C}$, and put $R:=\omega(L)=(1/\chi)(A)$ and $S:=I$ in Theorem~\ref{th6.2}. Since the function $1/\chi$ is bounded on $[1,\infty)$, the operator $R$ is bounded on $L_{2}(\mathbb{R}^{n})$, and $0\not\in\mathrm{Spec}\,\chi(A)$. It follows from the latter property that $H^{\chi}_{A}=\mathrm{Dom}\,\chi(A)$; hence, the operator $\chi(A)$ sets an isometric isomorphism between $H^{\chi}_{A}$ and $L_{2}(\mathbb{R}^{n})$. Thus,
\begin{equation*}
R(L_{2}(\mathbb{R}^{n}))=H^{\chi}_{A}=H^{\varphi}(\mathbb{R}^{n})\subset
C^{q}_{\mathrm{b}}(\mathbb{R}^{n})
\end{equation*}
due to \eqref{f4.20}, Proposition~\ref{prop6.12}, and our assumption~\eqref{f6.16}. Since the norms in the spaces $L_{2}(\mathbb{R}^{n})$ and $C^{q}_{\mathrm{b}}(\mathbb{R}^{n})$ are compatible, the operator $R$ acts continuously from $L_{2}(\mathbb{R}^{n})$ to $N=C^{q}_{\mathrm{b}}(\mathbb{R}^{n})$, as  was shown in Remark~\ref{rem6.3}. Thus, the operators $R$ and $S$ satisfy all the hypotheses of Theorem~\ref{th6.2}. According to this theorem, the spectral expansion \eqref{f6.1} converges unconditionally in the space $C^{q}_{\mathrm{b}}(\mathbb{R}^{n})$ at every vector $f\in (RS)(L_{2}(\mathbb{R}^{n}))=H^{\varphi}(\mathbb{R}^{n})$. The sufficiency is proved

\emph{Necessity.} Assume now that the spectral expansion \eqref{f6.1} converges unconditionally in $C^{q}_{\mathrm{b}}(\mathbb{R}^{n})$ on the class $H^{\varphi}(\mathbb{R}^{n})$. Then $f\in H^{\varphi}(\mathbb{R}^{n})$ implies $f=E(\mathbb{C})f\in C^{q}_{\mathrm{b}}(\mathbb{R}^{n})$ by Definition~\ref{def6.1}. Hence, $\varphi$ satisfies \eqref{f6.16} due to Proposition~\ref{prop6.12}. The necessity is also proved.
\end{proof}

\begin{proof}[Proof of Theorem $\ref{th6.10}$]
Consider the function $\chi_{j}(t):=\phi_{j}(t^{1/(2m)})$ of $t\geq1$ for each $j\in\{1,2\}$ and the functions $\eta(z):=(\chi_{1}(1+|z|^{2}))^{-1}$ and $\omega(z):=(\chi_{2}(1+|z|^{2}))^{-1}$ of $z\in\mathbb{C}$. Setting
$A:=I+L^{\ast}L$, we put $S:=\eta(L)=(1/\chi_{1})(A)$ and
$R:=\omega(L)=(1/\chi_{2})(A)$ in Theorem~\ref{th6.2}. The functions $\eta$ and $\omega$ are bounded on $\mathbb{C}$ by its hypotheses (note that the boundedness of $\omega$ follows from \eqref{f6.17} in view of Lemma~\ref{lem6.13}). Hence, the operators $R$ and $S$ are bounded on the Hilbert space $H=L_{2}(\mathbb{R}^{n})$. It follows from \eqref{f6.17} that $R$ acts continuously from $L_{2}(\mathbb{R}^{n})$ to $N=C^{q}_{\mathrm{b}}(\mathbb{R}^{n})$, as was shown in the proof of Theorem $\ref{th6.8}$ (the sufficiency). According to Theorem~\ref{th6.2} and Remark~\ref{rem6.4}, we have the estimate
\begin{equation}\label{f6.26}
\begin{aligned}
&\|f-E(\widetilde{B}_{\lambda})f\|_{C,q,\mathbb{R}^{n}}\\
&\leq
c'\cdot\|g\|_{\mathbb{R}^{n}}\cdot
\sup_{}\bigl\{(\phi_{1}(\langle z\rangle^{1/m})^{-1}:z\in\mathbb{C},|z|\geq\lambda\bigr\}
\cdot r_{g}(\widetilde{B}_{\lambda})
\end{aligned}
\end{equation}
for all $f\in RS(L_{2}(\mathbb{R}^{n}))$ and $\lambda>0$. Here, $c'$ denotes the norm of the bounded operator $R:L_{2}(\mathbb{R}^{n})\to C^{q}_{\mathrm{b}}(\mathbb{R}^{n})$, whereas $\|\cdot\|_{\mathbb{R}^{n}}$ stands for the norm in $L_{2}(\mathbb{R}^{n})$, and $g\in L_{2}(\mathbb{R}^{n})$ satisfies $f=RSg$. Note that $RS=(1/\chi)(A)$ where $\chi(t):=\chi_{1}(t)\chi_{2}(t)=\varphi(t^{1/(2m)})$ for every $t\geq1$. Since $0\not\in\mathrm{Spec}\,\chi(A)$, the operator $\chi(A)$ sets an isometric isomorphism between $H^{\chi}_{A}$ and $L_{2}(\mathbb{R}^{n})$. The inverse operator $RS$ sets an isomorphism between $L_{2}(\mathbb{R}^{n})$ and $H^{\varphi}(\mathbb{R}^{n})$ because the spaces $H^{\chi}_{A}$ and $H^{\varphi}(\mathbb{R}^{n})$ coincide up to equivalence of norms by \eqref{f4.20}. Hence, $c'\|g\|_{\mathbb{R}^{n}}\leq c\,\|f\|_{\varphi,\mathbb{R}^{n}}$
for some number $c>0$ that does not depend on $f$ and $\lambda$. Thus, formula \eqref{f6.26} yields the required estimate \eqref{f6.18} if we put $\theta_{f}(\lambda):=r_{g}(\widetilde{B}_{\lambda})$.
\end{proof}

\subsection{}\label{sec6.3}
As in Subsection~\ref{sec4.2}, let $\Gamma$ be a compact boundaryless   $C^{\infty}$-manifold of dimension $n\geq1$ endowed with a positive $C^{\infty}$-density $dx$. We suppose here that $L$ is a PsDO of class $\Psi^{m}(\Gamma)$ for some $m>0$ and that $L$ is elliptic on $\Gamma$. We may and will consider $L$ as a closed unbounded operator in the Hilbert space $H:=L_{2}(\Gamma)$ with $\mathrm{Dom}\,L=H^{m}(\Gamma)$ (see \cite[Sections 2.3~d and 3.1~b]{Agranovich94}). We also suppose that $L$ is a normal operator in $L_{2}(\Gamma)$. Then the Hilbert space $L_{2}(\Gamma)$ has an orthonormal basis $\mathcal{E}:=(e_{j})_{j=1}^{\infty}$ formed by some eigenvectors $e_{j}\in C^{\infty}(\Gamma)$ of $L$ (see, e.g., \cite[Section~15.2]{Shubin01}). Thus, the spectral expansion
\begin{equation}\label{f6.27}
f=\sum_{j=1}^{\infty}\varkappa_{j}(f)e_j,\quad\mbox{with}\quad \varkappa_{j}(f):=(f,e_j)_{\Gamma},
\end{equation}
holds in $L_{2}(\Gamma)$ for every $f\in L_{2}(\Gamma)$. (Recall that  $(\cdot,\cdot)_{\Gamma}$ and $\|\cdot\|_{\Gamma}$ respectively stand for the inner product and norm in $L_{2}(\Gamma)$.) These eigenvectors are enumerated so that $|\lambda_{j}|\leq|\lambda_{j+1}|$ whenever $j\geq1$, with $\lambda_{j}$ denoting the eigenvalue of $L$ such that $Le_j=\lambda_{j}e_j$. Note that $|\lambda_{j}|\to\infty$ as $j\to\infty$. Moreover, if $L$ is a classical PsDO, then
\begin{equation}\label{f6.28}
|\lambda_{j}|\sim\widetilde{c}\,j^{m/n}\quad\mbox{as}\quad j\to\infty,
\end{equation}
where $\widetilde{c}$ is a certain positive number that does not depend on~$j$.

As usual, $C^{q}(\Gamma)$ denotes the Banach space of all functions $u:\Gamma\to\mathbb{C}$ that are $q$ times continuously differentiable on $\Gamma$. The norm in this space is denoted by $\|\cdot\|_{C,q,\Gamma}$.

For the spectral expansion \eqref{f6.27}, the following versions of Theorems \ref{th6.8} and \ref{th6.10} hold true:

\begin{theorem}\label{th6.14}
Let $0\leq q\in\mathbb{Z}$ and $\varphi\in\mathrm{OR}$. The series \eqref{f6.27} converges unconditionally in the normed space $C^{q}(\Gamma)$ on the function class $H^{\varphi}(\Gamma)$ if and only if $\varphi$ satisfies \eqref{f6.16}.
\end{theorem}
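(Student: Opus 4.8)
The plan is to follow the proof of Theorem~\ref{th6.8} almost verbatim, replacing $\mathbb{R}^{n}$ by $\Gamma$, replacing the H\"ormander embedding Proposition~\ref{prop6.12} by its counterpart on $\Gamma$, and replacing Theorem~\ref{th6.2} by Theorem~\ref{th6.5} (which applies here because an elliptic PsDO of positive order on the closed manifold $\Gamma$ has compact resolvent, hence pure point spectrum with $|\lambda_{j}|\to\infty$, as already recorded in Subsection~\ref{sec6.3}). Two ingredients are needed: (a) under condition~\eqref{f6.16} the continuous embedding $H^{\varphi}(\Gamma)\hookrightarrow C^{q}(\Gamma)$ holds, and conversely this embedding forces~\eqref{f6.16}; (b) the abstract spectral expansion Theorem~\ref{th6.5}.

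For ingredient (a) I would reuse the flattening and sewing operators $T,K$ built in the proof of Theorem~\ref{th4.3}. Proposition~\ref{prop6.12} gives $H^{\varphi}(\mathbb{R}^{n})\hookrightarrow C^{q}_{\mathrm{b}}(\mathbb{R}^{n})$ under~\eqref{f6.16}; since $T:H^{\varphi}(\Gamma)\to(H^{\varphi}(\mathbb{R}^{n}))^{\varkappa}$ is isometric, $K$ extends to a bounded operator $(C^{q}_{\mathrm{b}}(\mathbb{R}^{n}))^{\varkappa}\to C^{q}(\Gamma)$ (each $\eta_{j}w_{j}$ is $C^{q}$ and compactly supported), and $KT=\mathrm{id}$ on $C^{\infty}(\Gamma)$, so the composition yields the required embedding. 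For the converse, I would transplant a compactly supported $w\in H^{\varphi}(\mathbb{R}^{n})$ from a coordinate ball $G\subset\mathbb{R}^{n}$ onto $\Gamma$ by $f:=\Theta_{1}(w\circ\pi_{1}^{-1})$; using the local description of $H^{\varphi}(\Gamma)$ (Theorem~\ref{th4.6}(ii)) together with the invariance of $H^{\varphi}(\mathbb{R}^{n})$ under multiplication by $C^{\infty}_{0}$ functions and under $C^{\infty}$-diffeomorphisms (the same invariances used in the proof of Theorem~\ref{th4.3}), one checks $f\in H^{\varphi}(\Gamma)$; if $H^{\varphi}(\Gamma)\subseteq C^{q}(\Gamma)$ then $w=f\circ\pi_{1}\in C^{q}(G)$, which is exactly~\eqref{f6.21}, so Proposition~\ref{prop6.12} yields~\eqref{f6.16}.

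For the sufficiency half I would put $A:=I+L^{\ast}L$; by the composition theorem for PsDOs, $A\in\Psi^{2m}(\Gamma)$ is elliptic, and it is positive definite and self-adjoint in $L_{2}(\Gamma)$ with $\mathrm{Spec}\,A\subseteq[1,\infty)$. Since $L$ is normal, $A=\rho(L)$ with $\rho(z):=1+|z|^{2}$. Assuming~\eqref{f6.16}, Lemma~\ref{lem6.13} gives that $1/\varphi$ (and hence $1/\chi$ below) is bounded on $[1,\infty)$, so $H^{\varphi}(\Gamma)\subset L_{2}(\Gamma)$. Put $\chi(t):=\varphi(t^{1/(2m)})$, $\omega(z):=(\chi(1+|z|^{2}))^{-1}$, $R:=\omega(L)=(1/\chi)(A)$, and $S:=I$ (i.e.\ $\eta\equiv1$). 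Since $1/\chi$ is bounded on $[1,\infty)$ and $\mathrm{Spec}\,A\subseteq[1,\infty)$, we have $0\notin\mathrm{Spec}\,\chi(A)$, so $\chi(A)$ is an isometric isomorphism $H^{\chi}_{A}(\Gamma)\leftrightarrow L_{2}(\Gamma)$ and therefore $R(L_{2}(\Gamma))=H^{\chi}_{A}(\Gamma)=H^{\varphi}(\Gamma)$ up to equivalence of norms by~\eqref{f4.20} (Remark~\ref{rem4.8} with $\mathrm{ord}\,A=2m$). By ingredient (a), $H^{\varphi}(\Gamma)\subset C^{q}(\Gamma)$; since the norms in $L_{2}(\Gamma)$ and $C^{q}(\Gamma)$ are compatible, Remark~\ref{rem6.3} makes $R:L_{2}(\Gamma)\to C^{q}(\Gamma)$ bounded, i.e.~\eqref{f6.4} holds with $N=C^{q}(\Gamma)$. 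Theorem~\ref{th6.5} then gives unconditional convergence of~\eqref{f6.27} in $C^{q}(\Gamma)$ at every $f\in RS(L_{2}(\Gamma))=H^{\varphi}(\Gamma)$. For the necessity half, unconditional convergence forces $f=\sum_{j}\varkappa_{j}(f)e_{j}\in C^{q}(\Gamma)$ for every $f\in H^{\varphi}(\Gamma)$, i.e.\ $H^{\varphi}(\Gamma)\subseteq C^{q}(\Gamma)$, whence the converse part of ingredient (a) gives~\eqref{f6.16}.

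The genuinely new work relative to Theorem~\ref{th6.8} is packaging ingredient (a): transferring H\"ormander's embedding and its local converse between $\mathbb{R}^{n}$ and $\Gamma$ through the flattening/sewing machinery and the local description of $H^{\varphi}(\Gamma)$. This is routine but is the step requiring the most care; the operator-calculus part ($A=I+L^{\ast}L$, the functions $\chi,\omega$, and the identification $R(L_{2}(\Gamma))=H^{\varphi}(\Gamma)$) and the appeal to the abstract Theorem~\ref{th6.5} are essentially verbatim copies of the $\mathbb{R}^{n}$ argument.
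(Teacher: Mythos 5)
Your proposal is correct and follows essentially the same route as the paper: the paper's proof of Theorem~\ref{th6.14} likewise repeats the argument of Theorem~\ref{th6.8} with $\mathbb{R}^{n}$ replaced by $\Gamma$ and Theorem~\ref{th6.5} in place of Theorem~\ref{th6.2}, and isolates your ``ingredient (a)'' as Proposition~\ref{prop6.16}, which it proves by the same flattening/sewing transference of H\"ormander's embedding and its local converse that you describe. The only cosmetic difference is that you transplant the test distribution $w$ in the converse step as $\Theta_{1}(w\circ\pi_{1}^{-1})$ rather than as $K(w,0,\ldots,0)$, which amounts to the same construction.
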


\begin{theorem}\label{th6.15}
Let $0\leq q\in\mathbb{Z}$, and assume that the PsDO $L$ is classical. Suppose that certain functions $\phi_{1},\phi_{2}\in\mathrm{OR}$ satisfy the hypotheses of Theorem~$\ref{th6.10}$, and consider the function $\varphi:=\phi_{1}\phi_{2}\in\mathrm{OR}$ subject to \eqref{f6.16}. Then the degree of the convergence of \eqref{f6.27} in the normed space $C^{q}(\Gamma)$ on the class $H^{\varphi}(\Gamma)$ admits the estimate
\begin{equation}\label{f6.29}
\biggl\|f-\sum_{j=1}^{k}\varkappa_j(f)e_j\biggr\|_{C,q,\Gamma}
\leq c\cdot\|f\|_{\varphi,\Gamma}\cdot
\sup\bigl\{(\phi_{1}(j^{1/n}))^{-1}:k+1\leq j\in\mathbb{Z}\bigr\}
\cdot\theta_{f,k}
\end{equation}
for every function $f\in H^{\varphi}(\Gamma)$ and each integer $k\geq1$. Here, $c$ is a certain positive number that does not depend on $f$ and $k$, and $(\theta_{f,k})_{k=1}^{\infty}$ is a decreasing sequence that lies in $[0,1]$ and tends to zero.
\end{theorem}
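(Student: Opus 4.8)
The plan is to derive the estimate \eqref{f6.29} from the abstract point-spectrum result Theorem~\ref{th6.5}, following the proof of Theorem~\ref{th6.10} but using Theorem~\ref{th6.5} in place of Theorem~\ref{th6.2} and adding the Weyl eigenvalue asymptotics. First I would put $A:=I+L^{\ast}L$. Since $L$ is normal, $A$ is the value $g(L)$ of the continuous function $g(z):=1+|z|^{2}$, so $A$ has the same orthonormal eigenbasis $(e_{j})_{j=1}^{\infty}$ as $L$, with eigenvalue $\mu_{j}:=1+|\lambda_{j}|^{2}\geq1$ at $e_{j}$; moreover $(\mu_{j})$ is nondecreasing and $\mu_{j}\to\infty$. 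As in the proof of Theorem~\ref{th6.10}, $A$ is a positive definite self-adjoint operator in $L_{2}(\Gamma)$ with $\mathrm{Spec}\,A\subseteq[1,\infty)$, it satisfies condition~(c) of Subsection~\ref{sec4.2} because $(Af,f)_{\Gamma}=\|f\|_{\Gamma}^{2}+\|Lf\|_{\Gamma}^{2}$, and by the composition theorem for PsDOs it is elliptic of order $2m$; since $L$ (hence $L^{\ast}$, hence $L^{\ast}L$) is classical, $A$ is a \emph{classical} elliptic PsDO of order $2m$, so the Weyl law behind \eqref{f5.20} and \eqref{f6.28} gives $\mu_{j}\sim c\,j^{2m/n}$, i.e.\ $\mu_{j}^{1/(2m)}/j^{1/n}$ tends to a positive limit.

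Next I would set $\chi_{j}(t):=\phi_{j}(t^{1/(2m)})$ for $t\geq1$ and $j\in\{1,2\}$, and $\chi:=\chi_{1}\chi_{2}$, so that $\chi_{1},\chi_{2},\chi\in\mathrm{OR}$ and $\chi(t)=\varphi(t^{1/(2m)})$. Put $R:=(1/\chi_{2})(A)=\omega(L)$ and $S:=(1/\chi_{1})(A)=\eta(L)$, where $\omega(z):=1/\chi_{2}(1+|z|^{2})$ and $\eta(z):=1/\chi_{1}(1+|z|^{2})$ are Borel, bounded, and nowhere zero on $\mathrm{Spec}\,L$; here boundedness of $1/\chi_{1}$ follows from $\phi_{1}\to\infty$ and that of $1/\chi_{2}$ from \eqref{f6.17} and Lemma~\ref{lem6.13}, exactly as in the proof of the sufficiency in Theorem~\ref{th6.8}. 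Conditions \eqref{f6.16} and \eqref{f6.17} also force $\varphi,\phi_{2}\to\infty$ by Lemma~\ref{lem6.13}, whence $0\notin\mathrm{Spec}\,\chi(A)$ and $0\notin\mathrm{Spec}\,\chi_{2}(A)$. Applying the order-$2m$ version of \eqref{f4.20}--\eqref{f4.21} from Remark~\ref{rem4.8} (with function parameters $\chi_{2}$ and $\chi$) I get, up to equivalence of norms, $R(L_{2}(\Gamma))=\mathrm{Dom}\,\chi_{2}(A)=H^{\chi_{2}}_{A}(\Gamma)=H^{\phi_{2}}(\Gamma)$ and $\mathrm{Dom}\,\chi(A)=H^{\chi}_{A}(\Gamma)=H^{\varphi}(\Gamma)$. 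By \eqref{f6.17}, Proposition~\ref{prop6.12} gives $H^{\phi_{2}}(\mathbb{R}^{n})\hookrightarrow C^{q}_{\mathrm{b}}(\mathbb{R}^{n})$; combined with the local-chart description of $H^{\phi_{2}}(\Gamma)$ (Theorem~\ref{th4.6}(ii)) this yields the continuous embedding $H^{\phi_{2}}(\Gamma)\hookrightarrow C^{q}(\Gamma)$, and since the norms of $L_{2}(\Gamma)$ and $C^{q}(\Gamma)$ are compatible, Remark~\ref{rem6.3} shows that $R$ satisfies condition \eqref{f6.4} with $N:=C^{q}(\Gamma)$. Thus $R$ and $S$ fulfil the hypotheses of Theorem~\ref{th6.5}, and $RS=(1/\chi)(A)$ is algebraically reversible with $(RS)^{-1}=\chi(A)$.

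By Theorem~\ref{th6.5}, for every $f\in RS(L_{2}(\Gamma))=H^{\varphi}(\Gamma)$ the series \eqref{f6.27} converges unconditionally in $C^{q}(\Gamma)$ and \eqref{f6.12} holds with $g:=(RS)^{-1}f=\chi(A)f$. It then remains to bound the four factors on the right of \eqref{f6.12}. The factor $\|R\|_{H\to N}$ is a constant independent of $f$ and $k$; since $\|g\|_{\Gamma}=\|\chi(A)f\|_{\Gamma}$ is the norm of $f$ in $H^{\chi}_{A}(\Gamma)=H^{\varphi}(\Gamma)$, we have $\|g\|_{\Gamma}\leq c\,\|f\|_{\varphi,\Gamma}$. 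By Remark~\ref{rem6.6},
\[
\|S(I-P_{k})\|_{H\to H}\leq\sup_{j\geq k+1}|\eta(\lambda_{j})|
=\sup_{j\geq k+1}\bigl(\phi_{1}(\mu_{j}^{1/(2m)})\bigr)^{-1}.
\]
Since $\mu_{j}^{1/(2m)}\asymp j^{1/n}$ and $\phi_{1}\in\mathrm{OR}$, the argument that proves \eqref{f5.23} provides a constant $c'\geq1$ with $\phi_{1}(j^{1/n})\leq c'\,\phi_{1}(\mu_{j}^{1/(2m)})$ for all $j\geq1$, whence $\|S(I-P_{k})\|_{H\to H}\leq c'\sup\{(\phi_{1}(j^{1/n}))^{-1}:k+1\leq j\in\mathbb{Z}\}$. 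Finally, the sequence $\theta_{f,k}:=r_{g,k}$ is, by \eqref{f6.14}, decreasing, $[0,1]$-valued, tends to zero, and does not depend on $R$ or $S$; collecting all the constants into a single $c>0$ independent of $f$ and $k$ gives \eqref{f6.29}, the case $f=0$ being trivial.

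The step I expect to be the main obstacle is the transfer of the eigenvalue asymptotics, namely justifying $\mu_{j}^{1/(2m)}\asymp j^{1/n}$ and the ensuing comparability $(\phi_{1}(\mu_{j}^{1/(2m)}))^{-1}\asymp(\phi_{1}(j^{1/n}))^{-1}$: this needs $A=I+L^{\ast}L$ to be a classical elliptic PsDO of order $2m$ (so that the asymptotics \eqref{f6.28}, stated for the normal operator $L$, is available for $A$, hence for $\mu_{j}$), together with the $\mathrm{OR}$-property of $\phi_{1}$ to absorb the bounded ratio $\mu_{j}^{1/(2m)}/j^{1/n}$, exactly as in the derivation of \eqref{f5.23}. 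A secondary technical point is to check that $R$ and $S$, built as functions of $A$, are genuine functions of the normal operator $L$ — so that Theorem~\ref{th6.5} applies — and satisfy \eqref{f6.4}; this is routine once $L$ is normal and Remarks \ref{rem4.8} and \ref{rem6.3} are invoked.
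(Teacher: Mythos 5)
Your proposal is correct and follows essentially the same route as the paper: the paper's proof likewise repeats the argument of Theorem~\ref{th6.10} with $A:=I+L^{\ast}L$, $R:=(1/\chi_2)(A)$, $S:=(1/\chi_1)(A)$, invoking Theorem~\ref{th6.5} and Remark~\ref{rem6.6} in place of Theorem~\ref{th6.2} and Remark~\ref{rem6.4}, and then converts $\phi_{1}(\langle\lambda_j\rangle^{1/m})$ into $\phi_{1}(j^{1/n})$ via the asymptotics \eqref{f6.28} and the $\mathrm{OR}$-property of $\phi_{1}$, exactly as you do. Your write-up merely supplies more of the routine details (classicality of $A$, the embedding $H^{\phi_2}(\Gamma)\hookrightarrow C^{q}(\Gamma)$, invertibility of $RS$) that the paper leaves implicit.
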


We illustrate these theorems with analogous examples to those given in the previous subsection. Let $0\leq q\in\mathbb{Z}$, and let $c$ denote a positive number that does not depend on the function $f$ and integer $k$ from Theorem~\ref{th6.15}. Dealing with estimates of the form \eqref{f6.29}, we suppose that the PsDO $L$ is classical.

\begin{example}\label{ex6.3.1}
Owing to Theorem~\ref{th6.14}, the series \eqref{f6.27} converges unconditionally in $C^{q}(\Gamma)$ on the Sobolev class $H^{s}(\Gamma)$ if and only if $s>q+n/2$. This fact is known (see, e.g., \cite[Chapter~XII, Exercise~4.5]{Taylor81} in the $q=0$ case). Let $s>q+n/2$, and put $r:=s-q-n/2>0$. If  $0<\varepsilon<r/n$, then
\begin{equation*}
\biggl\|f-\sum_{j=1}^{k}\varkappa_j(f)e_j\biggr\|_{C,q,\Gamma}\leq
c\,\|f\|_{s,\Gamma}(k+1)^{\varepsilon-r/n}
\end{equation*}
for all $f\in H^{s}(\Gamma)$ and $k\geq1$, with $\|\cdot\|_{s,\Gamma}$ being the norm in $H^{s}(\Gamma)$. This estimate follows from Theorem~\ref{th6.15}, in which we put $\phi_{1}(t):=t^{r-n\varepsilon}$ and $\phi_{2}(t):=t^{s-r+n\varepsilon}$ for every $t\geq1$. The estimate admits the following refinement:
\begin{equation*}
\biggl\|f-\sum_{j=1}^{k}\varkappa_j(f)e_j\biggr\|_{C,q,\Gamma}\leq
c\,\|f\|_{s,\Gamma}(k+1)^{-r/n}\log^{\varepsilon+1/2}(k+1)
\end{equation*}
for the same $f$ and $k$, we choosing a real number $\varepsilon>0$ arbitrarily. This estimate follows from Theorem~\ref{th6.15} applied to the functions \eqref{f6.18b}.
\end{example}

\begin{example}\label{ex6.3.2}
We choose a number $\varrho>0$ arbitrarily and define a function $\varphi$ by formula \eqref{f6.19}. According to Theorem~\ref{th6.14}, the series \eqref{f6.27} converges unconditionally in $C^{q}(\Gamma)$ on the class $H^{\varphi}(\Gamma)$. This fact is known at least in the $q=0$ case (see \cite[Chapter~XII, Exercise~4.8]{Taylor81}). If $0<\varepsilon<\varrho$, then
\begin{equation*}
\biggl\|f-\sum_{j=1}^{k}\varkappa_j(f)e_j\biggr\|_{C,q,\Gamma}\leq
c\,\|f\|_{\varphi,\Gamma}\log^{\varepsilon-\varrho}(k+1)
\end{equation*}
for all $f\in H^{\varphi}(\Gamma)$ and $k\geq1$. This estimate follows from Theorem~\ref{th6.15} if we represent $\varphi$ in the form used in  Example~\ref{ex6.2.2}. Comparing this result with the previous example, we see that $H^{\varphi}(\Gamma)$ is broader than the union
\begin{equation*}
H^{q+n/2+}(\Gamma):=\bigcup_{s>q+n/2}H^{s}(\Gamma).
\end{equation*}
\end{example}

\begin{example}\label{ex6.3.3}
We choose a number $\varrho>0$ arbitrarily and define a function $\varphi$ by formula \eqref{f6.20}. Owing to Theorem~\ref{th6.14}, the series \eqref{f6.27} converges unconditionally in $C^{q}_{\mathrm{b}}(\mathbb{R}^{n})$ on the class $H^{\varphi}(\mathbb{R}^{n})$. This class is broader than that used in Example~\ref{ex6.3.2}. If $0<\varepsilon<\varrho$, then
\begin{equation*}
\biggl\|f-\sum_{j=1}^{k}\varkappa_j(f)e_j\biggr\|_{C,q,\Gamma}\leq
c\cdot\|f\|_{\varphi,\Gamma}\cdot(\log\log(k+2))^{\varepsilon-\varrho}
\end{equation*}
for all $f\in H^{\varphi}(\Gamma)$ and $k\geq1$. This bound follows from Theorem~\ref{th6.15} if we represent $\varphi$ in the form given in  Example~\ref{ex6.2.3}.
\end{example}

These results are applicable to multiple trigonometric series. Indeed, if $\Gamma=\mathbb{T}^{n}$ and $A=\Delta_{\Gamma}$, then \eqref{f6.27} becomes the expansion of $f$ into the $n$-multiple trigonometric series (as usual, $\mathbb{T}:=\{e^{i\tau}:0\leq\tau\leq 2\pi\}$). It is known \cite[Section~6]{Golubov84} that this series is unconditionally uniformly convergent (on $\Gamma$) on every H\"older class $C^{s}(\Gamma)$ of order $s>n/2$. The exponent $n/2$ is critical here; namely, there exists a function $f\in C^{n/2}(\Gamma)$ whose trigonometric series diverges at some point of $\mathbb{T}^{n}$. These results consist a multi-dimensional generalization of Bernstein's theorem for trigonometric series. Since $C^{s}(\Gamma)\subset H^{s}(\Gamma)$, Example \ref{ex6.3.1} gives a weaker sufficient condition for this convergent. The next Examples \ref{ex6.3.2} and \ref{ex6.3.3} treat the case of the critical exponent with the help of generalized Sobolev spaces.

The proofs of Theorems \ref{th6.14} and \ref{th6.15} are similar to the proofs of Theorems \ref{th6.8} and \ref{th6.10}, we using Theorem~\ref{th6.5} (instead of  Theorem~\ref{th6.2}) and the following analog of Proposition~\ref{prop6.12}:

\begin{proposition}\label{prop6.16}
Let $0\leq q\in\mathbb{Z}$ and $\varphi\in\mathrm{OR}$. Then condition \eqref{f6.16} is equivalent to the embedding $H^{\varphi}(\Gamma)\subseteq C^{q}(\Gamma)$. Moreover, this embedding is compact under condition \eqref{f6.16}.
\end{proposition}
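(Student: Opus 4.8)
The plan is to reduce the statement to its Euclidean counterpart, Proposition~\ref{prop6.12}, by using the flattening and sewing operators $T$ and $K$ constructed in the proof of Theorem~\ref{th4.3} together with the local description of $H^{\varphi}(\Gamma)$ furnished by item~(ii) of Theorem~\ref{th4.6}. Three things have to be shown: (1) condition \eqref{f6.16} implies the continuous embedding $H^{\varphi}(\Gamma)\hookrightarrow C^{q}(\Gamma)$; (2) the set inclusion $H^{\varphi}(\Gamma)\subseteq C^{q}(\Gamma)$ implies \eqref{f6.16}; (3) under \eqref{f6.16} the embedding is compact.

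For (1), I would take $f\in H^{\varphi}(\Gamma)$ and note that, by the local definition, each $(\chi_{j}f)\circ\pi_{j}$ lies in $H^{\varphi}(\mathbb{R}^{n})\hookrightarrow C^{q}_{\mathrm{b}}(\mathbb{R}^{n})$ (Proposition~\ref{prop6.12}), with $\|(\chi_{j}f)\circ\pi_{j}\|_{C,q,\mathbb{R}^{n}}\le c\,\|(\chi_{j}f)\circ\pi_{j}\|_{\varphi,\mathbb{R}^{n}}$. Writing $f=\sum_{j}\chi_{j}f=KTf$ and using that multiplication by a function from $C^{\infty}_{0}(\mathbb{R}^{n})$ and a $C^{\infty}$-change of variables are bounded on $C^{q}_{\mathrm{b}}(\mathbb{R}^{n})$ — the same operations that were exploited on $H^{\varphi}(\mathbb{R}^{n})$ in the proof of Theorem~\ref{th4.3} — one obtains $f\in C^{q}(\Gamma)$ and $\|f\|_{C,q,\Gamma}\le c'\,\|f\|_{\varphi,\Gamma}$.

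For (2), I would fix a single chart $\pi_{j_{0}}$ and an open nonempty set $G\subset\mathbb{R}^{n}$ with compact closure such that $\pi_{j_{0}}(\overline{G})\subset\Gamma_{j_{0}}$. For any $w\in H^{\varphi}(\mathbb{R}^{n})$ with $\mathrm{supp}\,w\subset G$, the distribution $f:=\Theta_{j_{0}}(w\circ\pi_{j_{0}}^{-1})$ (the extension by zero to $\Gamma$) belongs to $H^{\varphi}(\Gamma)$: by the local definition one only needs $(\chi_{k}f)\circ\pi_{k}\in H^{\varphi}(\mathbb{R}^{n})$ for every $k$, and this function equals $(\chi_{k}\circ\pi_{k})\cdot(w\circ\beta)$ for a $C^{\infty}$-diffeomorphism $\beta$ of $\mathbb{R}^{n}$ agreeing with $\pi_{j_{0}}^{-1}\circ\pi_{k}$ near the relevant support and equal to the identity off a compact set, hence lies in $H^{\varphi}(\mathbb{R}^{n})$ by the boundedness of smooth cutoffs and of changes of variables on $H^{\varphi}(\mathbb{R}^{n})$ (proof of Theorem~\ref{th4.3}). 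If $H^{\varphi}(\Gamma)\subseteq C^{q}(\Gamma)$, then $f\in C^{q}(\Gamma)$, so $w=f\circ\pi_{j_{0}}\in C^{q}(\mathbb{R}^{n})$; thus $\{w\in H^{\varphi}(\mathbb{R}^{n}):\mathrm{supp}\,w\subset G\}\subseteq C^{q}(\mathbb{R}^{n})$, and \eqref{f6.16} follows from the converse part of Proposition~\ref{prop6.12}. (That a set inclusion $H^{\varphi}(\Gamma)\subseteq C^{q}(\Gamma)$ is automatically a continuous embedding follows from the closed graph theorem, both spaces being continuously embedded in the Hausdorff space $\mathcal{D}'(\Gamma)$.)

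For (3), I would invoke Remark~\ref{rem6.11} to write $\varphi=\phi_{1}\phi_{2}$ with $\phi_{1},\phi_{2}\in\mathrm{OR}$, $\phi_{1}(t)\to\infty$ as $t\to\infty$, and $\phi_{2}$ still satisfying \eqref{f6.16}. Applying Theorem~\ref{th5.8} with $A:=(1-\Delta_{\Gamma})^{1/2}$ (a classical elliptic PsDO of order $1$ obeying (a)--(c) of Subsection~\ref{sec4.2}), the maps $f\mapsto(\varphi(j^{1/n})\varkappa_{j}(f))_{j}$ and $f\mapsto(\phi_{2}(j^{1/n})\varkappa_{j}(f))_{j}$ are, up to equivalence of norms, isomorphisms of $H^{\varphi}(\Gamma)$ and $H^{\phi_{2}}(\Gamma)$ onto $\ell_{2}$, and under them the identity embedding $H^{\varphi}(\Gamma)\hookrightarrow H^{\phi_{2}}(\Gamma)$ turns into the diagonal operator on $\ell_{2}$ with entries $\phi_{2}(j^{1/n})/\varphi(j^{1/n})=1/\phi_{1}(j^{1/n})\to0$, which is compact. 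Composing this compact embedding with the continuous embedding $H^{\phi_{2}}(\Gamma)\hookrightarrow C^{q}(\Gamma)$ (available from (1), since $\phi_{2}$ satisfies \eqref{f6.16}) then yields the compactness of $H^{\varphi}(\Gamma)\hookrightarrow C^{q}(\Gamma)$. The hard part will not be conceptual but the bookkeeping in (1) and (2) — especially the verification that $\Theta_{j_{0}}(w\circ\pi_{j_{0}}^{-1})\in H^{\varphi}(\Gamma)$ via the local definition; once the boundedness of smooth multipliers and diffeomorphisms on $H^{\varphi}(\mathbb{R}^{n})$, Proposition~\ref{prop6.12}, Theorem~\ref{th5.8}, and Remark~\ref{rem6.11} are granted, the argument is a routine assembly.
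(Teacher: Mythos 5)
Your argument is correct, and for the equivalence itself it follows essentially the paper's route: sufficiency via the local definition plus Proposition~\ref{prop6.12} and the closed graph theorem, and necessity by extending a compactly supported $w\in H^{\varphi}(\mathbb{R}^{n})$ by zero to $\Gamma$ and pulling back. (The paper packages the necessity step slightly differently: it chooses a nonempty open $U\subset\Gamma_{1}$ disjoint from the other chart domains so that $\chi_{1}\equiv1$ on $U$, and applies the sewing operator to $(w,0,\dots,0)$ to get $w=(\chi_{1}u)\circ\pi_{1}$; your direct verification of $\Theta_{j_{0}}(w\circ\pi_{j_{0}}^{-1})\in H^{\varphi}(\Gamma)$ via the local definition uses the same multiplier-and-diffeomorphism bounds from the proof of Theorem~\ref{th4.3} and avoids the ``without loss of generality'' choice of $U$.) Where you genuinely diverge is the compactness. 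The paper factors the embedding through $H^{\phi_{2}}(\Gamma)$ exactly as you do, but proves the compactness of $H^{\varphi}(\Gamma)\hookrightarrow H^{\phi_{2}}(\Gamma)$ by flattening with $T$, invoking H\"ormander's compact embedding theorem for $\mathcal{B}_{2,k}$-spaces on $\mathbb{R}^{n}$, and sewing back with $K$; you instead use the spectral description of Theorem~\ref{th5.8} for $A=(1-\Delta_{\Gamma})^{1/2}$ to realize the embedding as a diagonal operator on $\ell_{2}$ with entries $1/\phi_{1}(j^{1/n})\to0$. Your version is more self-contained (it replaces an external citation by an elementary diagonal-operator argument already prepared inside the paper), at the cost of needing $(1-\Delta_{\Gamma})^{1/2}$ to be a classical first-order elliptic PsDO so that Theorem~\ref{th5.8} applies --- which holds by Seeley's theorem and is consistent with how the paper itself uses Theorem~\ref{th5.8} in Section~\ref{sec6}. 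Both routes are sound.
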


\begin{proof}
Suppose first that $\varphi$ satisfies condition~\eqref{f6.16}. Then the continuous embedding $H^\varphi(\mathbb{R}^n)\hookrightarrow C^q_\mathrm{b}(\mathbb{R}^n)$ holds true by Proposition~\ref{prop6.12}.
Let $\varkappa$, $\chi_j$, and $\pi_j$ be the same as those in the definition of $H^{\varphi}(\Gamma)$. Choosing $f\in H^\varphi(\Gamma)$ arbitrarily, we get the inclusion
\begin{equation*}
(\chi_jf)\circ\pi_j\in H^\varphi(\mathbb{R}^n)\hookrightarrow C^q_\mathrm{b}(\mathbb{R}^n)
\end{equation*}
for each $j\in\{1,\ldots,\varkappa\}$. Hence, each $\chi_jf\in C^q(\Gamma)$, which implies that
\begin{equation*}
f=\sum_{j=1}^\varkappa\chi_j f\in C^q(\Gamma).
\end{equation*}
Thus, $H^\varphi(\Gamma)\subseteq C^q(\Gamma)$; this embedding is continuous because both the spaces are complete and continuously embedded in  $\mathcal{D}'(\Gamma)$. Let us prove that it is compact.

We showed in  Remark~\ref{rem6.11} that $\varphi=\phi_{1}\phi_{2}$ for some functions $\phi_{1}$ and $\phi_{2}$ satisfying the hypotheses of Theorem~\ref{th6.10}. Since $\phi_{2}(t)/\varphi(t)=1/\phi_{1}(t)\to0$ as $t\to\infty$, the compact embedding $H^\varphi(\Gamma)\hookrightarrow H^{\phi_{2}}(\Gamma)$ holds true. Indeed, let $T$ and $K$ be the bounded operators \eqref{f4.11} and \eqref{f4.18}. If a sequence $(f_{k})$ is bounded in $H^\varphi(\Gamma)$, then the sequence $(Tf_{k})$ is bounded
in $(H^{\varphi}(\mathbb{R}^n))^{\varkappa}$. It follows from this by \cite[Theorem~2.2.3]{Hermander63} that the latter sequence contains a convergent subsequence $(Tf_{k_\ell})$ in $(H^{\phi_{2}}(\mathbb{R}^n))^{\varkappa}$. Hence,
the subsequence of vectors $f_{k_\ell}=KTf_{k_\ell}$ is convergent in $H^{\phi_{2}}(\Gamma)$. Thus, the embedding $H^\varphi(\Gamma)\hookrightarrow H^{\phi_{2}}(\Gamma)$ is compact. As we showed in the previous paragraph, the continuous embedding $H^{\phi_{2}}(\Gamma)\hookrightarrow C^q(\Gamma)$ holds true because $\phi_{2}$ satisfies \eqref{f6.17}. Therefore, the embedding $H^\varphi(\Gamma)\hookrightarrow C^q(\Gamma)$ is compact.

Assume now that the embedding $H^\varphi(\Gamma)\subseteq C^q(\Gamma)$ holds true, and prove that $\varphi$ satisfies \eqref{f6.16}.
We suppose without loss of generality that $\Gamma_1$ is not contained in $\Gamma_2\cup\cdots\cup\Gamma_\varkappa$, choose an open nonempty set $U\subset\Gamma_1$ which satisfies $U\cap\Gamma_j=\emptyset$ whenever $j\neq1$, and put $G:=\pi^{-1}_1(U)$. Consider an arbitrary distribution $w\in H^{\varphi}(\mathbb{R}^n)$ subject to $\mathrm{supp}\,w\subset G$.
Owing to \eqref{f4.18} and our assumption, we have the inclusion
\begin{equation*}
u:=K(w,\underbrace{0,\ldots,0}_{\varkappa-1}\,)\in H^{\varphi}(\Gamma) \subseteq C^q(\Gamma).
\end{equation*}
Hence, $w=(\chi_1 u)\circ\pi_1\in C^q(\mathbb{R}^{n})$; note that the letter equality is true because $\chi_1=1$ on $U$. Thus, \eqref{f6.21} holds true, which implies \eqref{f6.16} due to Proposition~\ref{prop6.12}.
\end{proof}

\begin{proof}[Proof of Theorem $\ref{th6.14}$]
\emph{Sufficiency} is proved in the same manner as the proof of the sufficiency in Theorem~\ref{th6.8}. We only replace $\mathbb{R}^{n}$ with $\Gamma$ and use Theorem~\ref{th6.5} instead of Theorem~\ref{th6.2} and Proposition~\ref{prop6.16} instead of
Proposition~\ref{prop6.12}.

\emph{Necessity.} Assume that the series \eqref{f6.27} converges in $C^{q}(\Gamma)$ on the class $H^{\varphi}(\Gamma)$. Then $H^{\varphi}(\Gamma)\subseteq C^{q}(\Gamma)$, which implies \eqref{f6.16} by Proposition~\ref{prop6.16}.
\end{proof}

\begin{proof}[Proof of Theorem $\ref{th6.15}$.] It is very similar to the proof of Theorem $\ref{th6.10}$. Replacing $\mathbb{R}^{n}$ with $\Gamma$ in this proof and using Theorem~\ref{th6.5} and Remark~\ref{rem6.6} instead of Theorem~\ref{th6.2} and Remark~\ref{rem6.4}, we obtain the following analog of the estimate \eqref{f6.12}:
\begin{equation}\label{f6.30}
\biggl\|f-\sum_{j=1}^{k}\varkappa_j(f)e_j\biggr\|_{C,q,\Gamma}\leq
c'\cdot\|g\|_{\Gamma}\cdot\sup_{j\geq k+1}
\bigl\{(\phi_{1}(\langle\lambda_j\rangle^{1/m}))^{-1}\bigr\}
\cdot r_{g,k}
\end{equation}
for every function $f\in H^{\varphi}(\Gamma)$ and each integer $k\geq1$. Here, $c'$ denotes the norm of the bounded operator $R:L_{2}(\Gamma)\to C^{q}(\Gamma)$, and $g:=(RS)^{-1}f\in L_{2}(\Gamma)$. Reasoning in the same way as that given after formula \eqref{f6.26}, we arrive at the inequality $c'\|g\|_{\Gamma}\leq c''\|f\|_{\varphi,\Gamma}$ where the number $c''>0$ does not depend on $f$ and $k$. Besides, owing to the inclusion  $\phi_{1}\in\mathrm{OR}$ and asymptotic formula \eqref{f6.28}, the exist two positive numbers $c_{1}$ and $c_{2}$ such that
\begin{equation*}
c_{1}\phi_{1}(j^{1/n})\leq
\phi_{1}(\langle\lambda_j\rangle^{1/m})\leq
c_{2}\,\phi_{1}(j^{1/n})
\end{equation*}
for every integer $j\geq1$. Thus, formula \eqref{f6.30} yields the required estimate \eqref{f6.29} if we put $\theta_{f,k}:=r_{g,k}$ and $c:=c''/c_{1}$.
\end{proof}

\subsection{}\label{sec6.last}
We end Section~\ref{sec6} with two sufficient conditions under which
the spectral expansion \eqref{f6.27} converges a.e. (almost everywhere) on the manifold $\Gamma$ with respect to the measure induced by the $C^{\infty}$-density $dx$. These conditions are formulated in terms of belonging of $f$ to some generalized Sobolev spaces on $\Gamma$. Put
\begin{equation*}
S^{\ast}(f,x):=\sup_{1\leq k<\infty}\,
\biggl|\,\sum_{j=1}^{k}\;\varkappa_{j}(f)e_{j}(x)\,\biggr|
\end{equation*}
for all $f\in L_{2}(\Gamma)$ and $x\in\Gamma$; thus, $S^{\ast}(f,x)$ is the majorant of partial sums of \eqref{f6.27}. Consider the function $\log^{\ast}t:=\max\{1,\log t\}$ of $t\geq1$; it pertains to $\mathrm{OR}$. We suppose that the PsDO $L$ is classical.

\begin{theorem}\label{th6.17}
The series \eqref{f6.27} converges a.e. on $\Gamma$ on the function class $H^{\log^{\ast}}(\Gamma)$. Besides, there exists a number $c>0$ such that
\begin{equation*}
\|S^{\ast}(f,\cdot)\|_{\Gamma}\leq c\,\|f\|_{\log^{\ast},\Gamma}
\quad\mbox{for every}\quad f\in H^{\log^{\ast}}(\Gamma).
\end{equation*}
\end{theorem}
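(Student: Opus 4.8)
The plan is to reduce Theorem~\ref{th6.17} to the classical Menshov--Rademacher theorem on almost everywhere convergence of orthogonal series; the bridge between the two is the spectral description of $H^{\log^{\ast}}(\Gamma)$ furnished by Theorem~\ref{th5.8}.

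First I would produce a positive definite self-adjoint classical elliptic PsDO of order one whose orthonormal eigenbasis is exactly $\mathcal{E}=(e_{j})$. Since $L$ is a normal classical elliptic PsDO of order $m>0$, the operator $A:=I+L^{\ast}L$ is a positive definite self-adjoint classical elliptic PsDO of order $2m$ on $\Gamma$ (with $\mathrm{Dom}\,A=H^{2m}(\Gamma)$ and $\mathrm{Spec}\,A\subseteq[1,\infty)$), and by Seeley's theorem \cite{Seeley67} its root $A^{1/(2m)}$ is the closure of a classical PsDO of order one, self-adjoint, positive definite, with $\mathrm{Spec}\,A^{1/(2m)}\subseteq[1,\infty)$ --- that is, an operator of the kind considered in Section~\ref{sec4}. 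Normality of $L$ gives $L^{\ast}e_{j}=\overline{\lambda_{j}}\,e_{j}$, hence $A^{1/(2m)}e_{j}=(1+|\lambda_{j}|^{2})^{1/(2m)}e_{j}$; thus $\mathcal{E}$ is an orthonormal eigenbasis of $A^{1/(2m)}$, and the enumeration by nondecreasing $|\lambda_{j}|$ is precisely the enumeration by nondecreasing eigenvalue of $A^{1/(2m)}$. Applying Theorem~\ref{th5.8} to $A^{1/(2m)}$ with this eigenbasis (cf.\ Remark~\ref{rem4.8}), we obtain that $f\in H^{\log^{\ast}}(\Gamma)$ if and only if
\begin{equation*}
\sum_{j=1}^{\infty}\bigl(\log^{\ast}(j^{1/n})\bigr)^{2}\,|\varkappa_{j}(f)|^{2}<\infty ,
\end{equation*}
with $\|f\|_{\log^{\ast},\Gamma}$ equivalent to the square root of this sum.

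Next I would normalise the weight. Since $\log^{\ast}(j^{1/n})=\max\{1,n^{-1}\log j\}$, there are constants $0<c_{1}\leq c_{2}$ with $c_{1}\log(j+2)\leq\log^{\ast}(j^{1/n})\leq c_{2}\log(j+2)$ for all $j\geq1$; hence $f\in H^{\log^{\ast}}(\Gamma)$ if and only if $\sum_{j}(\log(j+2))^{2}|\varkappa_{j}(f)|^{2}<\infty$, and $\|f\|_{\log^{\ast},\Gamma}$ is equivalent to the square root of this last sum. Now $(e_{j})_{j=1}^{\infty}$ is an orthonormal system in $L_{2}(\Gamma)$ and $(\varkappa_{j}(f))_{j}$ is the corresponding sequence of Fourier coefficients of $f$, so the Menshov--Rademacher theorem applies: the series $\sum_{j}\varkappa_{j}(f)e_{j}$ converges a.e.\ on $\Gamma$, and by the Menshov--Rademacher maximal inequality
\begin{align*}
\|S^{\ast}(f,\cdot)\|_{\Gamma}
&=\Bigl\|\sup_{k\geq1}\Bigl|\sum_{j=1}^{k}\varkappa_{j}(f)e_{j}\Bigr|\Bigr\|_{\Gamma}\\
&\leq c_{3}\Bigl(\sum_{j=1}^{\infty}(\log(j+2))^{2}|\varkappa_{j}(f)|^{2}\Bigr)^{1/2}
\leq c\,\|f\|_{\log^{\ast},\Gamma} .
\end{align*}
Here $S^{\ast}(f,\cdot)$ is measurable, being a pointwise supremum of continuous functions (each $e_{j}\in C^{\infty}(\Gamma)$). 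Finally, $\sum_{j}\varkappa_{j}(f)e_{j}$ also converges to $f$ in $L_{2}(\Gamma)$ because $\mathcal{E}$ is a basis, so its a.e.\ limit must coincide with $f$; this yields the asserted a.e.\ convergence and completes the argument.

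I expect the only genuinely non-automatic step to be the first one: checking that $A^{1/(2m)}$ is indeed a classical, order-one, positive definite self-adjoint PsDO on $\Gamma$ and that $\mathcal{E}$ may be taken as its eigenbasis, so that Theorem~\ref{th5.8} together with Remark~\ref{rem4.8} delivers the description of $H^{\log^{\ast}}(\Gamma)$ precisely in terms of the coefficients $\varkappa_{j}(f)=(f,e_{j})_{\Gamma}$. Everything beyond that point is the classical Menshov--Rademacher theory together with the elementary two-sided comparison of $\log^{\ast}(j^{1/n})$ with $\log(j+2)$.
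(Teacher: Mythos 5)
Your proposal is correct and follows essentially the same route as the paper: reduce to the coefficient characterization of $H^{\log^{\ast}}(\Gamma)$ via Theorem~\ref{th5.8} applied to a positive power of $I+L^{\ast}L$ (the paper uses $(I+L^{\ast}L)^{1/m}$ with Remark~\ref{rem4.8}, you use $(I+L^{\ast}L)^{1/(2m)}$ via Seeley --- an immaterial normalization), compare $\log^{\ast}(j^{1/n})$ with $\log(j+2)$, and invoke the Menshov--Rademacher theorem together with its maximal inequality.
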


If $f\in H^{\log^{\ast}}(\Gamma)$, then the convergence of the series \eqref{f6.27} may be violated under a permutation of its terms. To ensure that the convergence does not depend on their order, we should subject $f$ to a stronger condition.

\begin{theorem}\label{th6.18}
Assume that a function $\varphi\in\mathrm{OR}$ (nonstrictly) increases and satisfies
\begin{equation}\label{f6.32}
\int\limits_{2}^{\infty}\frac{dt}{t\,(\log t)\,\varphi^{2}(t)}<\infty.
\end{equation}
Then the series \eqref{f6.27} converges unconditionally a.e. on $\Gamma$ on the function class $H^{\varphi\log^{\ast}}(\Gamma)$.
\end{theorem}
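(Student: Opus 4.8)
The plan is to reduce Theorem~\ref{th6.18} to a classical theorem on the \emph{unconditional} almost everywhere convergence of orthogonal series (Tandori's sharp form of the Men'shov--Rademacher theorem, whose ordered version underlies Theorem~\ref{th6.17}), after describing $H^{\varphi\log^{\ast}}(\Gamma)$ through the Fourier coefficients $\varkappa_{j}(f)=(f,e_{j})_{\Gamma}$ with respect to the eigenbasis $\mathcal{E}=(e_{j})_{j=1}^{\infty}$ of $L$. First note that $\varphi\log^{\ast}\in\mathrm{OR}$, being a product of two functions from $\mathrm{OR}$. Since $L$ is normal and classical, its eigenvectors $e_{j}$ also form an orthonormal eigenbasis of the positive definite self-adjoint classical elliptic PsDO $B:=(I+L^{\ast}L)^{1/(2m)}\in\Psi^{1}(\Gamma)$, with $Be_{j}=(1+|\lambda_{j}|^{2})^{1/(2m)}e_{j}$. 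Applying Theorem~\ref{th5.8} to $B$ (together with Theorem~\ref{th4.3}, which identifies $H^{\varphi\log^{\ast}}_{B}(\Gamma)$ with $H^{\varphi\log^{\ast}}(\Gamma)$), we obtain that a distribution $f\in\mathcal{D}'(\Gamma)$ lies in $H^{\varphi\log^{\ast}}(\Gamma)$ if and only if
\[
\sum_{j=1}^{\infty}\bigl(\varphi(j^{1/n})\bigr)^{2}\bigl(\log^{\ast}(j^{1/n})\bigr)^{2}\,|\varkappa_{j}(f)|^{2}<\infty ,
\]
the square root of this sum being a norm equivalent to $\|\cdot\|_{\varphi\log^{\ast},\Gamma}$. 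Put $\mu_{j}:=\varphi(j^{1/n})$; then $(\mu_{j})_{j=1}^{\infty}$ is nondecreasing and bounded below by a positive constant (as $\varphi\in\mathrm{OR}$ is nondecreasing and positive), and, for the fixed $n$, the quantity $\log^{\ast}(j^{1/n})$ is equivalent to $\log(j+2)$, so $f\in H^{\varphi\log^{\ast}}(\Gamma)$ means that $\sum_{j}|\varkappa_{j}(f)|^{2}\,(\log(j+2))^{2}\,\mu_{j}^{2}<\infty$.

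Next I would rewrite hypothesis \eqref{f6.32} in terms of $(\mu_{j})$. The substitution $t=s^{1/n}$ turns the integral in \eqref{f6.32} into $\int_{2^{n}}^{\infty}\bigl(s\,(\log s)\,\varphi^{2}(s^{1/n})\bigr)^{-1}ds$, and since $\varphi\in\mathrm{OR}$ the integrand is, on each interval $[j,j+1]$, equivalent to $\bigl(j\,(\log j)\,\mu_{j}^{2}\bigr)^{-1}$; therefore \eqref{f6.32} is equivalent to
\[
\sum_{j=2}^{\infty}\frac{1}{j\,(\log j)\,\mu_{j}^{2}}<\infty .
\]

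With these two reformulations, Theorem~\ref{th6.18} becomes precisely an instance of the classical theorem on orthogonal series: if $(\mu_{j})$ is a nondecreasing positive sequence with $\sum_{j}\bigl(j\,(\log j)\,\mu_{j}^{2}\bigr)^{-1}<\infty$, then for every orthonormal system $(\phi_{j})$ in a space $L_{2}$ and every coefficient sequence $(c_{j})$ with $\sum_{j}|c_{j}|^{2}\,(\log j)^{2}\,\mu_{j}^{2}<\infty$, the series $\sum_{j}c_{j}\phi_{j}$ converges unconditionally almost everywhere. Taking $(\phi_{j})=\mathcal{E}$ in $L_{2}(\Gamma)$ and $c_{j}=\varkappa_{j}(f)$ for $f\in H^{\varphi\log^{\ast}}(\Gamma)$, we conclude that the series \eqref{f6.27} converges unconditionally a.e. on $\Gamma$; since \eqref{f6.27} also converges to $f$ in $L_{2}(\Gamma)$, its a.e. limit under every rearrangement equals $f$, which completes the proof.

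The essential ingredient is the orthogonal-series input of the last step; everything else is the routine translation through Theorem~\ref{th5.8} and an integral-versus-series comparison. If one wanted instead to deduce Theorem~\ref{th6.18} directly from Theorem~\ref{th6.17} without invoking that theorem, the natural route would be the dyadic block decomposition $\Delta_{\nu}=\{j:2^{\nu}\le j<2^{\nu+1}\}$: for a fixed rearrangement one controls the partial sums inside each block by the Men'shov--Rademacher maximal inequality of Theorem~\ref{th6.17} (applied within a single block, where it is insensitive to the order of summation), and handles the completed blocks by the same inequality applied to the coarse orthogonal system $\bigl(\sum_{j\in\Delta_{\nu}}\varkappa_{j}(f)e_{j}\bigr)_{\nu}$. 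The hard part of this route --- and the reason condition \eqref{f6.32} appears in exactly the stated form --- is that a crude block-by-block summation over $\nu$ loses a power of the logarithm, so the estimate must be organized more carefully; this is the technical core of Tandori's argument, and quoting the classical theorem sidesteps it.
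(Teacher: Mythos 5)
Your proposal is correct and follows essentially the same route as the paper: characterize $H^{\varphi\log^{\ast}}(\Gamma)$ through the eigenbasis via Theorem~\ref{th5.8} applied to a power of $I+L^{\ast}L$, convert hypothesis \eqref{f6.32} into the series condition $\sum_{j}\bigl(j\,(\log j)\,\varphi^{2}(j^{1/n})\bigr)^{-1}<\infty$ by an integral--sum comparison (using the monotonicity of $\varphi$ and the $\mathrm{OR}$ property), and invoke the classical criterion for unconditional a.e.\ convergence of orthogonal series with a nondecreasing weight sequence. The only cosmetic difference is attribution: the criterion you quote is the Orlicz theorem in Ul'janov's equivalent form (valid for general complex orthogonal series), which is exactly what the paper cites, rather than Tandori's theorem.
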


These theorems are proved in \cite[Section~2.3.2]{MikhailetsMurach14}, the second being demonstrated in the case where $\varphi$ varies slowly at infinity in the sense of Karamata. The proofs rely on Theorem~\ref{th5.8} and general forms of the classical Menshov--Rademacher \cite{Menschoff23, Rademacher22} and Orlicz \cite{Orlicz27} theorems about a.e. convergence of orthogonal series. We give these brief proofs for the sake of completeness.

\begin{proof}[Proof of Theorem~$\ref{th6.17}$.]
Note that the orthonormal basis $\mathcal{E}$ of $L_{2}(\Gamma)$ consists of eigenvectors of the operator $A:=(I+L^{\ast}L)^{1/m}$, to which Theorem~\ref{th5.8} is applicable. Owing to this theorem, we have
\begin{equation*}
\sum_{j=1}^{\infty}(\log^{2}(j+1))|\varkappa_{j}(f)|^{2}\asymp
\sum_{j=1}^{\infty}(\log^{\ast}(j^{1/n}))^{2}\,|\varkappa_{j}(f)|^{2}
\asymp\|f\|_{\log^{\ast},\Gamma}^{2}<\infty
\end{equation*}
whenever $f\in H^{\log^{\ast}}(\Gamma)$, with $\asymp$ meaning equivalence of norms. Now Theorem~\ref{th6.17} follows from the Menshov--Rademacher theorem, which remains true for general complex orthogonal series formed by square integrable functions (see, e.g., \cite{Meaney07,  MikhailetsMurach11MFAT4, MoriczTandori96}).
\end{proof}

\begin{proof}[Proof of Theorem~$\ref{th6.18}$.]
Let $f\in H^{\varphi\log^{\ast}}(\Gamma)$, and put $\omega_{j}:=\varphi^{2}(j^{1/n})$ for every integer $j\geq1$. Owing to Theorem~\ref{th5.8} applied to $A:=(I+L^{\ast}L)^{1/m}$, we have
\begin{equation}\label{f6.33}
\sum_{j=2}^{\infty}(\log^{2}j)\,\omega_{j}\,|\varkappa_{j}(f)|^{2}\asymp
\|f\|_{\varphi\log^{\ast},\Gamma}^{2}<\infty.
\end{equation}
Besides, condition \eqref{f6.32} implies that
\begin{equation}\label{f6.34}
\sum_{j=3}^{\infty}\frac{1}{j\,(\log j)\,\omega_{j}}\leq
\int\limits_{2}^{\infty}
\frac{d\tau}{\tau\,(\log\tau)\,\varphi^{2}(\tau^{1/n})}= \int\limits_{2^{1/n}}^{\infty}
\frac{n\,t^{n-1}\,dt}{t^{n}\,n\,(\log
t)\,\varphi^{2}(t)}<\infty.
\end{equation}
The conclusion of Theorem \ref{th6.18} follows from \eqref{f6.33} and \eqref{f6.34} due to the Orlicz theorem (in Ul'janov's equivalent statement \cite[Section~9, Subsection~1]{Uljanov64}), which remains true for general complex orthogonal series \cite[Theorem~2]{MikhailetsMurach12UMJ10} (see also \cite[Theorem~3]{MikhailetsMurach11MFAT4}).
\end{proof}

As to Theorems \ref{th6.17} and \ref{th6.18}, note the following: if we restrict ourselves to the Sobolev spaces, we will assert only that the series \eqref{f6.27} converges unconditionally a.e. on $\Gamma$ on the function class $H^{0+}(\Gamma):=\bigcup_{s>0}H^{s}(\Gamma)$ (cf. \cite{Meaney82}). This class is significantly narrower than the spaces used in these theorems. Using the extended Sobolev scale, we express in adequate forms the hypotheses of the Menshov--Rademacher and Orlicz theorems.

\end{document}